\renewcommand{\P}{{\bf \mathcal{P}}}
\renewcommand{\L}{{\bf \mathcal{L}}}
\newcommand{\Ind}{
 \setbox0=\hbox{$x$}\kern\wd0\hbox to 0pt{\hss$
 \mid$\hss}\lower.9\ht0\hbox to 0pt{\hss$\smile$\hss}\kern\wd0
}
\newcommand{\indep}[3]{
 #1\mathop{\mathpalette\Ind{}}_{#2}#3
}
\newcommand{\dInd}{
 \setbox0=\hbox{$x$}\kern\wd0\hbox to 0pt{\hss$
 \mid^d$\hss}\lower.9\ht0\hbox to 0pt{\hss$\smile$\hss}\kern\wd0
}
\newcommand{\dnInd}{
 \setbox0=\hbox{$x$}\kern\wd0\hbox to 0pt{\hss$
 \mid^d$\hss}\lower.9\ht0\hbox to 0pt{\hss$\smile$\hss}\kern\wd0
}
\newcommand{\dindep}[3]{
 #1\mathop{\mathpalette\dInd{}}_{#2}#3
}
\newcommand{\Notind}{
 \setbox0=\hbox{$x$}\kern\wd0\hbox to 0pt{\mathchardef
 \nn=12854\hss$\nn$\kern1.4\wd0\hss}\hbox to 0pt{\hss$\mid$\hss}\lower.9\ht0
 \hbox to 0pt{\hss$\smile$\hss}\kern\wd0
}
\newcommand{\Notdind}{
 \setbox0=\hbox{$x$}\kern\wd0\hbox to 0pt{\mathchardef
 \nn=12854\hss$\nn$\kern1.4\wd0\hss}\hbox to 0pt{\hss$\mid^d$\hss}\lower.9\ht0
 \hbox to 0pt{\hss$\smile$\hss}\kern\wd0
}
\newcommand{\notind}[3]{
 #1\mathop{\mathpalette\Notind{}}_{#2}#3
}
\title{Building-like geometries of finite Morley Rank\\
}
\author{Isabel M\"uller\footnote{Partially supported by SFB 878 and \emph{Deutsche Telekom Stiftung}}\quad and Katrin Tent
}
\date{May 29, 2017}
\newtheorem{satz}{Theorem}[section]
\newtheorem{theorem}[satz]{Theorem}
\newtheorem{lemma}[satz]{Lemma}
\newtheorem{proposition}[satz]{Proposition}
\newtheorem{corollary}[satz]{Corollary}
\newtheorem{definition}[satz]{Definition}
\newtheorem{remark}[satz]{Remark}
\newcommand{\nc}{\newcommand}
\nc{\sa}{semialgebraic\xspace}
\nc{\el}{elementary\xspace}
\nc{\low}{lower \el}
\nc{\inv}[1]{\frac{1}{#1}}
\nc{\G}{\Gamma}
\nc{\Np}{\N_{\scriptscriptstyle >0}}
\nc{\Z}{\mathbb{Z}}
\nc{\Q}{\mathbb{Q}}
\nc{\N}{\mathbb{N}}
\nc{\Rp}{\R_{\scriptscriptstyle >0}}
\nc{\C}{\mathbb{C}}
\nc{\F}{\ensuremath{\mathcal{F}}\xspace}
\nc{\K}{\mathcal{K}}
\nc{\Kmu}{\mathcal{K_\mu}}
\nc{\Mmu}{M_\mu}
\nc{\Tmu}{T_\mu}
\nc{\grad}{\chi^}
\nc{\U}{\mathbb{U}}
\nc{\E}{\mathbb{E}}
\nc{\Epsilon}{{\Large $\epsilon$}} 
\nc{\ap}{approximable\xspace}
\nc{\e}{\mathrm{e}}
\nc{\ii}{\,\mathrm{i}}
\nc{\Es}{\E\setminus\R_{\scriptscriptstyle\leq0}}
\DeclareMathOperator{\MR}{MR}
\DeclareMathOperator{\cl}{cl}
\DeclareMathOperator{\acl}{acl}
\DeclareMathOperator{\dcl}{dcl}
\DeclareMathOperator{\res}{res}
\begin{document}

\maketitle
\begin{abstract}
For any $n\geq 6$ we construct almost strongly minimal geometries of type $\bullet \overset{n}{-} \bullet \overset{n}{-}\bullet$ which are $2$-ample but not $3$-ample.\footnote{Parts of the results in this paper are also contained in M\"uller's PhD thesis.}
\end{abstract}

\section{Introduction}

In the investigation of geometries on strongly minimal sets the notion of
ampleness plays an important role, see \cite{Hrushovski}, \cite{HrushovskiZilber}. The notion reflects the geometry of
a projective space and hence projective spaces and more generally Tits buildings (of dimension $n+1$) are the canonical examples of $n$-ample structures. By the fundamental theorem of projective geometry, projective spaces of dimension at least $3$ arise from fields and these fields can be recovered from the geometry.
The same holds for spherical Tits buildings of rank at least $3$. 
Since algebraically closed fields are $n$-ample for all $n$ there were no known geometries of finite Morley rank which are $2$-ample, but do not interpret an infinite field, and it has been a long-standing open question whether such strongly minimal sets exist.

We here  construct strongly minimal sets arising from geometries of
\emph{geometric} rank $3$ which are $2$-ample, but not $3$-ample and hence do not interpret any infinite field. By the geometric rank of a geometry we mean the number of different sorts of vertices in the associated graph, which in this case we think of as points, lines and planes. In the geometries constructed here the residues of points and planes consist of generalized polygons constructed in \cite{Te1}.
Since generalized polygons are a generalization of projective planes and these
appear as the residues in projective spaces of higher rank,
 the geometries constructed here are the 'obvious' higher rank analogues to the higher rank buildings containing the  generalized polygons from \cite{Te1} as their residues. We expect that this construction can be
extended to yield geometries of rank $n+1$ which are $n$-ample, but not $(n+1)$-ample.

Note that the right-angled Tits buildings investigated in \cite{Te2} and \cite{BPZ} are $\omega$-stable of infinite Morley rank and for a building to have finite Morley rank, it has to be of spherical type. Since the results from \cite{KTVM} show that in 
spherical buildings of finite Morley rank the field is interpretable, the geometries constructed here cannot be buildings.

The construction uses a $\delta$-function closely related to the function from \cite{Te1}. However, in contrast to all other known Hrushovski constructions of
structures of finite Morley rank, in these examples the associated predimension function $\delta$ is not submodular.

\section{Construction of the geometry}

>From now on we fix some $n\geq 6$. 
We construct a geometry $\Gamma$ consisting of points, lines and planes with incidence
given by edges in the corresponding graph. As in \cite{Te1} we use Hrushovski amalgamation using a predimension function $\delta$ for the construction.

The geometry  will be realized as a  $3$-coloured graph whose vertices we may think of as points, lines, 
and planes. 
The edge relation describes the incidence between points, lines and planes. We also say that a point
(or line) is contained in a line or plane (respectively) if it is incident with it.
A \emph{flag} is a tuple of pairwise incident elements of distinct sorts and a flag is \emph{complete} if it consists of a point, line and plane. Note here that no element is incident to itself.

We work in a relational language $L_3$ containing predicates $\P, \L,\Pi$ so that a vertex of the graph is considered as a point, line or plane if it belongs to $\P,\L,$ or $\Pi$, respectively. We also add two kinds of edge relations: we let $E\subset (\P\times\L)\cup (\L\times\Pi)$ denote edges between points and lines or lines and planes, respectively. Edges between points and planes will be denoted by $E^2\subset \P\times\Pi$. (Note that $E^2$ is a binary relation, and not $E\circ E$.) Finally we add a predicate $\F$ for triples forming a complete flag. 

For any $x\in \G$ the residue $\res(x)$ denotes the set of vertices incident with~$x$. Thus if $x\in\P$, then $\res(x)\subset\L\cup\Pi$ etc.

Recall that a generalized $n$-gon is a bipartite graph
of diameter $n$ not containing any cycles of length less than $2n$.

\begin{definition}
We call a $3$-coloured graph $\G=\G_n$  a geometry of type $\bullet \overset{n}{-} \bullet \overset{n}{-}\bullet$
if the following conditions are satisfied:
\begin{enumerate}
\item For any $x\in \P\cup\Pi$, its residue $\res(x)$ is a generalized $n$-gon;
\item  a point is incident with
a plane if and only if there is a line incident with both, so
for any $x\in\L$, $\res(x)$ is a complete bipartite graph\footnote{Note that this implies that in a model $E^2=E\circ E$.};
\item for any two points (two planes, respectively) there is at most one line containing both; 
\item for any two points (two planes, respectively) not contained in a common line there is a unique plane (point, respectively) containing both.
\end{enumerate}
\end{definition}

We remark that the definition of a geometry of type $\bullet \overset{n}{-} \bullet \overset{n}{-}\bullet$ is \emph{self-dual}, i.e. symmetric in points and planes.
We call the \emph{dual} of a statement $\varphi$  the statement obtained from $\varphi$ by switching the roles of points and planes. 

Note that  the geometries of type  $\bullet \overset{n}{-} \bullet \overset{n}{-}\bullet$ form an
elementary class in the language $L_3$.
All graphs we consider will be $3$-coloured, so we may
omit mentioning it.
For a finite $3$-coloured graph $A$ we now put
 \begin{equation*} 
\delta(A):=(3(n-1)-1)|\L_A|+2(n-1)|\P_A\cup\Pi_A|
-(2(n-1)-1)|E_A|-(n-1)(|E^2_A|-|F_A|)
 \end{equation*}
where $\Pi_A, \L_A,\P_A, E_A, E^2_A$ and $F_A$ denotes the set of planes, lines, etc. in the graph $A$.
As usual we also define for finite $A$ and $ B$ (contained in some common graph $C$)
\[\delta(B/A):=\delta(AB)-\delta(A)\]
where we write $AB$ for the induced subgraph on the union $A\cup B$.
We say that a  finite graph $A$ is \emph{strong} in a graph $B$ (and we write $A\leq B$) if $A$ is contained in $B$ and for any finite subgraph $C$ of $B$ we have $\delta(C/A)\geq 0$.

For $A\subset \P\cup\L$ or $A\subset \L\cup\Pi$ we also define
\[\delta_1(A)=(n-1)|A|-(n-2)|E_A|.\]

This is the delta function from \cite{Te1} for generalized $n$-gons. Since generalized $3$-gons are nothing but projective planes, we see in particular, that for $n=3$ this is the delta function for a projective plane:
\[\delta_{proj}(A)=2|A|-|E_A|.\]

Note that we have the following, which motivates the choice of the
delta function given above:

\begin{lemma}\label{l:polygon}
For $x\in \P\cup\Pi$ and $A\subseteq\res(x)$  we have
\[\delta(A/x)=\delta_1(A).\]
\end{lemma}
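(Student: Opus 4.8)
The plan is to compute both sides of the claimed identity explicitly in terms of vertex and edge counts and observe that they coincide. Since the whole set-up (and in particular the predimension $\delta$, which treats $\P$ and $\Pi$ symmetrically and counts point--line and line--plane edges on equal footing) is self-dual, it suffices to treat the case $x\in\P$; the case $x\in\Pi$ is then literally the dual statement.

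First I would record $\delta(x)=2(n-1)$, since the induced graph on the singleton $\{x\}$ consists of one point and nothing else. Next I would fix notation for the induced graph on $Ax$: write $A=A_\L\cup A_\Pi$ where $A_\L$ is the set of lines and $A_\Pi$ the set of planes of $A$, and let $e$ be the number of line--plane incidences inside $A$. The three structural observations that drive the computation are: (i) every element of $A$ lies in $\res(x)$, so $Ax$ contains exactly $|A_\L|$ edges of $E$ from $x$ to a line and $|A_\Pi|$ edges of $E^2$ from $x$ to a plane, and otherwise only the $e$ line--plane edges of $E$ internal to $A$ (note no two lines, and no line and no plane outside $E$, can be adjacent, and $A$ contains no points so it has no internal $E^2$-edges); (ii) $x$ is the only point occurring in $Ax$, so a complete flag in $Ax$ is precisely a triple $(x,L,P)$ with $L\in A_\L$, $P\in A_\Pi$ incident, whence $|F_{Ax}|=e$. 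Consequently $|\L_{Ax}|=|A_\L|$, $|\P_{Ax}\cup\Pi_{Ax}|=1+|A_\Pi|$, $|E_{Ax}|=|A_\L|+e$, $|E^2_{Ax}|=|A_\Pi|$ and $|F_{Ax}|=e$.

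Substituting these counts into the defining formula for $\delta$ and expanding, the coefficient of $|A_\L|$ becomes $(3(n-1)-1)-(2(n-1)-1)=n-1$, the coefficient of $|A_\Pi|$ becomes $2(n-1)-(n-1)=n-1$, the coefficient of $e$ becomes $-(2(n-1)-1)+(n-1)=-(n-2)$, and the leftover constant is exactly $2(n-1)=\delta(x)$. Hence
\[\delta(A/x)=\delta(Ax)-\delta(x)=(n-1)\bigl(|A_\L|+|A_\Pi|\bigr)-(n-2)e=(n-1)|A|-(n-2)e.\]
Finally, since $A\subseteq\L\cup\Pi$, the only $E$-edges inside $A$ are its line--plane incidences, so $|E_A|=e$, and therefore $\delta(A/x)=(n-1)|A|-(n-2)|E_A|=\delta_1(A)$, as claimed.

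There is no genuine obstacle here; the statement is a bookkeeping identity, and the only points that require care are (a) using $A\subseteq\res(x)$ to get the exact count of edges and flags incident with $x$ and to identify the flag count with the internal line--plane edge count, and (b) checking that the constant term produced by the vertex $x$ is precisely $\delta(x)$, so that it cancels in $\delta(A/x)$. The same computation with points and planes interchanged handles $x\in\Pi$.
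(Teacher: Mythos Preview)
Your argument is correct and is essentially the same direct bookkeeping computation as the paper's; you compute $\delta(Ax)$ from scratch and subtract $\delta(x)$, whereas the paper expands $\delta(A/x)$ as $\delta(A)$ minus the edge and flag contributions from $x$ and then matches terms, but the underlying identifications (lines of $A$ with $E$-edges to $x$, planes/points of $A$ with $E^2$-edges to $x$, internal edges of $A$ with flags through $x$) are identical.
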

\noindent
Before giving the (easy) proof we introduce the following notation:

\noindent
{\bf Notation}: For sets $A,B$ we let $E(A,B), E^2(A,B)$ and $F(A,B)$ denote the
set of edges or flags, respectively, containing one vertex from $A$ and one from $B$
and in case of flags the third one from $AB$.
If we need to be more specific, we may also write $F(A,B,C)$ to denote the set of flags having exactly one vertex from each of the sets $A,B,C$ (but without fixed order).

\begin{proof} (of Lemma~\ref{l:polygon})
By symmetry we may assume $x\in\Pi$. We then have \[\delta(A/x)=\delta(A)-(2(n-1)-1)|E(A,x)|-(n-1)|E^2(A,x)|+(n-1)|F(A,x)|\]
\[=\delta_1(A)+(2n-3)\left(|\L_A|-|E(A,x)|\right)+(n-1)\left(|\P_A|-|E^2(A,x)|\right)+(n-1)\left(|F(A,x)|-|E_A|\right).\]
Clearly,  the lines in $A$ correspond to the edges in $E(A,x)$, the points  in $A$ correspond to the edges in $E^2(A,x)$ and the edges of $A$ correspond to flags containing $x$. This proves the claim.
\end{proof}

As in \cite{Te1} the previous lemma immediately implies:
\begin{remark}\label{r:pathextension}
For any plane $x$ and vertices $a,b\in\res(x)$ an extension of $A=\{a,b,x\}$ by a path $\gamma=(x_0=a,x_1,\ldots,x_k=b)\subset\res(x)$ is strong if and only if $k\geq n-1$.
\end{remark}
\begin{remark}\label{r:projplane}
Note that restricted to points and planes, the delta function
reads as a multiple of $\delta_1=\delta_{proj}$ in the case of a projective plane.
Thus, if $A$ contains only points and planes, then we have
\[\delta(A)=2(n-1)|A|-(n-1)|E^2_A|=(n-1)\delta_{proj}(A).\]
\end{remark}

\begin{definition}\label{d:K}
Let $\K$ denote the class of $3$-coloured graphs $A$ satisfying the following conditions:
\begin{enumerate}
\item for any two points in $A$ there is at most one line in $A$ containing them, similarly for planes;
\item if two points are contained in a common line, this line is contained in any plane containing the two points; similarly for the dual;
\item if two points of $A$ are not contained in a common line, then there is at most one plane in $A$ containing them, similarly for planes;
\item if a point $p\in A$ is incident with a line $l\in A$ and $l$ is incident with a plane $e\in A$, then $p$ is incident with $e$;
\item for any point or plane $x\in A$, there are no cycles of length less than $2n$ in $\res(x)$;
\item for any finite subgraph $B\subseteq A$, $B\in\K$ we have \begin{enumerate}
\item if $|B|\geq 3$, then
$\delta(B)\geq 3(n-1)+1$;
\item if $B$ contains a point $x$ such that $B\cap\res(x)$ contains a $2k$-cycle for $k\geq n+1$, we have $\delta_1(B\cap\res(x))\geq 2(n+1)$.
\end{enumerate} 
\end{enumerate}
\end{definition}

In view of condition 4. we call an edge $(x,y)$ of type $E^2$ an \emph{induced} edge if there is a line $w$ such that
$(x,w)$ and $(w,y)$ are edges.

\begin{remark}\label{r:substructures}
Clearly, $\K$ is an elementary class in the relational language $L_3$ introduced above.
Note however that by property 3. the class $\K$ is not closed under substructures when
considered in this language. However, $\K$ is closed under intersections.
\end{remark}

The definition of $\K$ immediately implies the following:

\begin{remark}\label{l:delta pos}\begin{enumerate}
\item For any $A\in \K$, $A\neq\emptyset$, we have $\delta(A)\geq 2(n-1)$. If $A$ contains a line, then $\delta(A)\geq 3(n-1)-1$. If $ A$ contains a complete flag, then $\delta(A)\geq 3(n-1)+1$;

\item For any $x\in\P\cup\Pi$ and $A\cup\{x\}\in \K,A\neq\emptyset, A\subseteq \res(x)$, we have $\delta(A/x)=\delta_1(A)\geq n-1$.
\end{enumerate}
\end{remark}

Note that  this delta function is \emph{not submodular}, i.e. there are sets $A\subset B$ and $C$ such that
\[ \delta(C/A)<\delta(C/B).\]

Namely, if $A=\{p_1,p_2\}$ consists of two points, $B=A\cup \{l\}$ for some line $l$ incident to the points in $A$ and $C=\{e\}$ for some plane $e$ containing $l$, then
we have
\[1=\delta(e/B)>\delta(e/A)=0.\]

This example motivates the following definition:
\begin{definition}\label{d:kstrong}
Suppose that $A\subset B$ are finite graphs. \begin{enumerate}
\item  We say that $A$ is $\L$-strong in $B$
and write $A\leq_\L B$ if
for all $x_1,x_2\in \P_A$ ($x_1,x_2\in\Pi_A$) any $l\in\L_B$ with $E(l,x_i),i=1,2$ is contained in $A$.

\item More generally we define $A\leq_kB$ for $A,B\in\K$ if $A$ is strong in every $k$-element extension of $A$ in $B$, i.e. if for every $B_0\subset B$ 
with $|B_0\setminus A|\leq k$ we have $A\leq AB_0$. Note that $A\leq_1 B$ implies $A\leq_\L B$.

\item For a proper strong extension $A\leq B$ we say that $B$ is minimal over $A$ if there is no  proper subset $C\neq A$ of $ B$ such that $A\leq C$ and $C\leq B$. 
\end{enumerate}
\end{definition}

We will show that in strong extensions the previous example is  the only source of failing submodularity. For this we first show

\begin{lemma}\label{l:delta_1}
If $\delta(x/A)\geq 0$ for $x\in \P\cup\Pi$, then there is at most one line in $\res(x)\cap A$. If $l\in\res(x)\cap A$ is a line, then
all other vertices of $\res(x)\cap A$ are in $\res(l)\cap\res(x)$.
If there is no such line, then $|A\cap\res(x)|\leq 2$.
\end{lemma}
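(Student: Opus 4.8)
The plan is to analyze the quantity $\delta(x/A)$ for a point or plane $x$ in terms of the combinatorial data of $A\cap\res(x)$, exploiting Lemma~\ref{l:polygon} and Remark~\ref{l:delta pos}(2). By symmetry (self-duality) we may assume $x\in\Pi$. First I would reduce to the case where $A\subseteq\res(x)$: only the part of $A$ inside $\res(x)$ contributes to $\delta(x/A)$, since vertices not incident with $x$ contribute no new edges of type $E$, $E^2$ or flags involving $x$. More precisely, writing $A_0=A\cap\res(x)$, I expect $\delta(x/A)=\delta(x/A_0)=\delta_1(A_0x)-\delta_1(A_0)$ via Lemma~\ref{l:polygon} applied to $A_0$ and to $A_0\cup\{x\}$, so the question becomes purely about the generalized-$n$-gon predimension $\delta_1$ restricted to a subgraph of a bipartite graph.

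Next I would compute $\delta_1$ explicitly. If $A_0$ consists of $p$ points (of $\res(x)$, i.e. lines of $\G$ incident to $x$) and $q$ lines (of $\res(x)$, i.e. planes — wait, $x$ is a plane so $\res(x)\subseteq\L\cup\P$; the bipartite sides are the lines and points of $\G$ incident to $x$) with $e$ incident pairs among them, then $\delta_1(A_0)=(n-1)(p+q)-(n-2)e$ and $\delta_1(A_0x)=(n-1)(p+q+1)-(n-2)(e+p)$, since $x$ is incident to exactly the $p$ points of $\res(x)$... I need to be careful which side the vertices of $\res(x)$ adjacent to $x$ lie on; in any case $x$ as a vertex of the ambient bipartite structure is adjacent to certain members of $A_0$. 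The upshot is that $\delta(x/A) = (n-1) - (n-2)\cdot(\text{number of members of }A_0\text{ incident with }x)$, which is $\geq 0$ exactly when that number is $0$ or $1$. Hmm — but that contradicts the statement, which allows a line plus arbitrarily many vertices in $\res(l)\cap\res(x)$. So the correct bookkeeping must be: $x$ is a plane, $\res(x)$ has two sorts, lines and points; a line $l\in\res(x)$ is adjacent to $x$ via an $E$-edge, a point $p\in\res(x)$ is adjacent to $x$ via an $E^2$-edge but that $E^2$-edge is induced whenever some line through $p$ in $\res(x)$ lies in $A_0$, contributing a compensating flag. So I would split $A_0$'s adjacencies to $x$ into those that create genuinely new structure versus those absorbed by flags, and track $\delta_1$ accordingly; the net effect is that each vertex of $A_0$ adjacent to $x$ whose edge to $x$ is \emph{not} forced (via a flag) costs $n-2$.

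From the resulting formula, $\delta(x/A)\geq 0$ forces at most one "free" adjacency. I would then argue: if $A_0$ contains two lines $l_1,l_2$ through $x$, each contributes a free $E$-edge, giving $\delta(x/A)\leq (n-1)-2(n-2)<0$ for $n\geq 6$, contradiction — hence at most one line. If $A_0$ contains one line $l$, then every other vertex $y\in A_0$ must have its adjacency to $x$ paid for by a flag, i.e. $(x,l,y)\in\F$, forcing $y\in\res(l)$; thus $y\in\res(l)\cap\res(x)$. If $A_0$ contains no line, then $A_0$ consists only of points (of $\G$), each contributing a free $E^2$-edge to $x$, so $\delta(x/A)\leq (n-1)-(n-2)|A_0|$, forcing $|A_0|\leq 1$; and here I realize I should double-check whether having one point plus $x$ genuinely costs only $n-2$ — re-examining, a single point $p$ with $E^2(p,x)$ and no line gives $\delta_1(\{p,x\})-\delta_1(\{p\}) = 2(n-1)-(n-2)-(n-1)=n-1-(n-2)=1\geq0$, fine, but two points with no common line give two $E^2$-edges and $\delta(x/A)=2(n-1)-2(n-2)\cdot? $ — I must recount, and I suspect the actual bound is $|A\cap\res(x)|\leq 2$ exactly because two non-collinear points cost $2(n-2)$ against a budget that, combined with the $n-1$ from $x$ itself already counted elsewhere, leaves room for at most $2$.

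The main obstacle I anticipate is precisely this careful accounting of how the $E^2$-edges and flags interact: the predimension $\delta$ charges $-(n-1)$ for an $E^2$-edge but gives back $+(n-1)$ for an associated flag, so an \emph{induced} $E^2$-edge is "free" relative to $x$ but only once the relevant line is present, and this interplay is exactly the non-submodularity phenomenon flagged before Definition~\ref{d:kstrong}. I would handle it by always passing through Lemma~\ref{l:polygon} to convert everything to $\delta_1$-computations inside the single generalized polygon $\res(x)$, where the only edges are $E$-edges (the induced $E^2$-edges and flags disappear into the $\delta_1$ formula), making the three cases a direct inequality check; the content of the lemma is then just that $(n-1)-(n-2)k\geq 0$ fails for $k\geq 2$ when $n\geq 3$, together with the flag-forces-incidence argument for the one-line case.
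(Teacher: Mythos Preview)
Your proposal never settles on a correct computation, and the final ``cost-per-vertex'' heuristic you arrive at is wrong. For $x\in\Pi$ and $A\subseteq\res(x)$ the actual formula is
\[
\delta(x/A)=2(n-1)-(2n-3)|\L_A|-(n-1)|\P_A|+(n-1)|E_A|,
\]
and the positive term $(n-1)|E_A|$ is exactly what your accounting drops. Your claim that two lines already give $\delta(x/A)\leq (n-1)-2(n-2)$ is therefore unjustified: with many points and edges in $A$ the edge term can be large. (Relatedly, your invocation of Lemma~\ref{l:polygon} is garbled: that lemma computes $\delta(A/x)=\delta_1(A)$, not $\delta(x/A)$, and the expression ``$\delta_1(A_0x)$'' is undefined since $x$ is a plane.)

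The paper handles this by a case split. If $A$ is acyclic then $|E_A|\leq |\L_A|+|\P_A|-1$, which plugged into the displayed formula gives $(n-1)\geq (n-2)|\L_A|$ and hence $|\L_A|\leq 1$; the remaining claims about one or zero lines then follow by direct computation (and this part is close in spirit to what you were groping for). The substantive case is when $A$ contains a cycle in $\res(x)$, necessarily of length $\geq 2n$. Here one must use the $\K$-constraint $\delta_1(A)\geq n-1$ from Remark~\ref{l:delta pos}(2) together with $|E_A|\geq 2|\P_A|$ (after pruning degree-one points), combine the two inequalities, and deduce $|\L_A|\leq 4<n$, contradicting the presence of a $2n$-cycle. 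This chain of inequalities is where the hypothesis $n\geq 6$ is genuinely used, and it is entirely absent from your sketch.
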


\begin{proof}
Assume that we have $x\in\Pi$ and a finite set $A\subset\res(x)$ with $A$ and $A\cup\{x\}\in\K$ and
\[\delta(x/A)=  2(n-1)-(2(n-1)-1)|\L_A|-(n-1)|\P_A|+(n-1)|E_A|\geq 0.\]
Then
\[|E_A|\geq \frac{2n-3}{n-1}|\L_A|+|\P_A|-2.\]

If $A$ contains no cycles, we have  $|E_A|<|\L_A|+|\P_A|$. It follows that $|\L_A|\leq 1$. If $|L_A|=1$,
then $|E_A|=|P_A|$ and if $A$ contains no line, then
$|\P_A|\leq 2$, proving the claim.

Now assume towards a contradiction that $A$ contains a cycle. As the previous inequality will be preserved when removing points of degree one, we may\ assume that all points in $A$ have degree at least $2$.
>From the above we have
\[n-1\geq (2(n-1)-1)|\L_A|+(n-1)|\P_ A|-(n-1)|E_ A|-(n-1).\]
Furthermore
\[\delta(A/x)=\delta_1(A)=(n-1)(|\L_A|+|\P_A|)-(n-2)|E_A|\geq n-1.\hspace{.5cm} (*)\]

Multiplying the last inequality by $(n-1)$ and the previous one by $(n-2)$, we get

\[(n-1)^2 (|\L_A|+|\P_ A|) \geq (n-2)(2n-3)|\L_A |+(n-2)(n-1)|\P_A |-(n-1)(n-2).\]
Hence \[(n-1)|\P_ A | \geq (n^2-5n+5)|\L_A|-(n^2-3n+2).\]

Since $|E_A| \geq 2|\P_A |$, we have from $(*)$
\[(n-1)(|\L_A |+|\P_A |) \geq (n-2)|E_A |+(n-1)\]
\[\geq 2(n-2)|\P_A |+(n-1).\]
Hence
\[(n-1)|\L_A | \geq (n-3)|\P_A |+(n-1).\]
Putting the above pieces together, we get
\[(n-1)^2 |\L_A| \geq (n-3)(n-1)|\P_A |+(n-1)^2\]
\[\geq (n-3)(n^2-5n+5)|L_A |-(n-3)(n^2-3n + 2)+(n-1)^2 ,\]
For $n \geq 6 $ this yields
\[5>\frac{n^3-7n^2+13n-7}{n^3-9n^2+22n-16}
\geq |\L_A |.\]
For $n \geq 6$, we must have $|\L_A| \leq 4$ contradicting our assumption that $A$ contains a cycle.

\end{proof}

We also need the corresponding result for lines:
\begin{lemma}\label{l:line}
Suppose $A$ and $Ax$ are in $ \K$ with $x\in\L$ and $\delta(x/A)\geq 0$. Then $A\cap\res(x)$ contains at most one point and one plane.
\end{lemma}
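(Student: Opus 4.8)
The plan is to express $\delta(x/A)$ in terms of $a:=|\P_A\cap\res(x)|$ and $b:=|\Pi_A\cap\res(x)|$ (we may assume $x\notin A$, as is implicit in the notation $\delta(x/A)$) and then dispose of the possible values of $(a,b)$. Passing from $A$ to $Ax$ adjoins one line, the $a+b$ edges of type $E$ joining $x$ to the points and planes of $A$ in $\res(x)$, and no edge of type $E^2$; the only delicate point is the number of new complete flags. But condition~4 of Definition~\ref{d:K}, applied to $Ax\in\K$, forces every point of $A$ on $x$ to be incident with every plane of $A$ on $x$, so the new complete flags are exactly the $ab$ triples $(p,x,e)$ with $p\in\P_A\cap\res(x)$ and $e\in\Pi_A\cap\res(x)$. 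Hence
\[\delta(x/A)=(3n-4)-(2n-3)(a+b)+(n-1)\,ab.\]

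I would then split into cases. If $b\le 1$, then substituting $b=0$ or $b=1$ makes the right-hand side linear and strictly decreasing in $a$, with value $2-n$ (resp.\ $3-n$) at $a=2$; since $n\ge 6$ this is negative for every $a\ge 2$, so $\delta(x/A)\ge 0$ forces $a\le 1$. By the self-duality of the construction the symmetric argument shows that $a\le 1$ forces $b\le 1$, so it remains only to exclude $a\ge 2$ \emph{and} $b\ge 2$ simultaneously. Here the predimension is of no help (for instance $a=b=3$ gives $\delta(x/A)=5>0$), and instead I would argue from the incidence axioms: choosing distinct points $p_1,p_2$ and distinct planes $e_1,e_2$ of $A$, all incident with $x$, condition~4 (in $Ax$) forces $(p_i,e_j)$ to be an edge of $A$ for all $i,j$, while condition~1 (in $Ax$) shows that $x$ is the only line of $Ax$ through $p_1$ and $p_2$, so---as $x\notin A$---the points $p_1,p_2$ lie on no common line of $A$; but then condition~3 for $A$ permits at most one plane of $A$ through both of them, contradicting $e_1\ne e_2$.

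I expect the crux to be this last step: one must notice that the predimension alone does not exclude $a,b\ge 2$, and that the obstruction instead comes from the (non-hereditary) incidence axiom~3 of $\K$, applied---after the line $x$ has been deleted---to a configuration in which $p_1$ and $p_2$ have lost their common line. The only other place requiring attention is getting the flag count right in the computation of $\delta(x/A)$, again via axiom~4; the rest is routine bookkeeping.
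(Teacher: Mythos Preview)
Your proof is correct and uses the same two ingredients as the paper: the explicit formula $\delta(x/A)=(3n-4)-(2n-3)(a+b)+(n-1)ab$ and the observation (via conditions 1, 3, 4 of Definition~\ref{d:K}) that $a\geq 2$ and $b\geq 2$ cannot occur simultaneously. The only difference is the order of the argument: the paper first invokes the $\K$-axioms to conclude $\min(a,b)\leq 1$, and only then evaluates the predimension (so the flag term becomes $\max(a,b)$ rather than $ab$), whereas you run the predimension first and reserve the incidence argument for the residual case. Both routes are equally short; your version has the merit of making explicit why the predimension alone does not suffice.
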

\begin{proof}
Put $A_0=A\cap res(x)$ and
suppose \[\delta(x/A)=(3(n-1)-1)-(2(n-1)-1)(|\P_{A_0}|+|\Pi_{A_0}|)+(n-1)|F(A,x,A)|\geq 0.\]
Since $A$ and $Ax$ are in $\K$, at least one of $\P_{A_0}$ and $\Pi_{A_0}$ contains at most one
element. Thus the number of flags in $F(A,x)$ is equal to the maximum of $|\P_{A_0}|$ and $|\Pi_{A_0}|$ if both numbers are non-zero. If this maximum is greater than one, we have a contradiction.
\end{proof}

The previous Lemmas~\ref{l:delta_1} and \ref{l:line} imply the following inductive setting for strong extensions:

\begin{lemma}\label{l:strongresidue}
Suppose that $A\leq C$ and $b\in\P_A\cup\Pi_A$.  Putting $A_0=A\cap\res(b)$ and  $C_0=C\cap\res(b)$  we have \[0\leq\delta(C_0/A)\leq \delta(C_0/A_0b)=\delta_1(C_0/A_0).\]In particular $A_0b\leq C_0b$. 
\end{lemma}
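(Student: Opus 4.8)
\emph{Proof strategy.} The bound $0\leq\delta(C_0/A)$ is just strongness, since $C_0$ is a finite subgraph of $C$ (and one argues the same way with any finite subgraph of $C$ lying inside $\res(b)$). The equality $\delta(C_0/A_0b)=\delta_1(C_0/A_0)$ is read off Lemma~\ref{l:polygon}: as $A_0\subseteq C_0$ one has $A_0bC_0=C_0\cup\{b\}$, so $\delta(C_0b)=\delta_1(C_0)+\delta(b)$ and $\delta(A_0b)=\delta_1(A_0)+\delta(b)$, and the two copies of $\delta(b)$ cancel on subtracting. Everything therefore comes down to the middle inequality $\delta(C_0/A)\leq\delta(C_0/A_0b)$, i.e., with $A'=A\setminus(A_0\cup\{b\})$ and $C'=C_0\setminus A_0$,
\[\delta(AC_0)+\delta(A_0b)\ \leq\ \delta(A)+\delta(C_0b).\]
The sets $A',C',A_0,\{b\}$ are pairwise disjoint with union $AC_0$. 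Since $\delta$ is not submodular I cannot just quote submodularity, so the plan is to verify this inequality by hand. The parts of $\delta$ counting lines and counting points and planes contribute equally to both sides, so it is equivalent to the corresponding inequality for the parts involving $E,E^2,F$; and comparing the four graphs $AC_0,A_0b,A,C_0b$ edge by edge and flag by flag, all contributions cancel except those of edges and complete flags having a vertex in $A'$ and a vertex in $C'$. So the inequality reduces to showing
\[(2(n-1)-1)\,e_E+(n-1)\,e_{E^2}\ \geq\ (n-1)\,f,\]
where $e_E$ (resp.\ $e_{E^2}$) counts edges of type $E$ (resp.\ $E^2$) with one vertex in $A'$ and one in $C'$, and $f$ counts complete flags with a vertex in $A'$ and a vertex in $C'$.

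The core of the proof is to use strongness together with Lemmas~\ref{l:delta_1} and \ref{l:line} to pin down which vertices of $C'$ can meet $A'$, and how. By symmetry assume $b\in\Pi$, so $C'$ consists of points and lines incident with $b$. For $x\in C\setminus A$ strongness gives $\delta(x/A)\geq0$. If $x$ is a point or plane, apply Lemma~\ref{l:delta_1} to $A\cap\res(x)$ (this set together with $x$ contains only one vertex of the sort of $x$, hence lies in $\K$; see below): then $A\cap\res(x)$ has at most one line, and if it has a line, every other vertex of it is incident with that line. If $x$ is a line, Lemma~\ref{l:line} gives that $A\cap\res(x)$ has at most one point and at most one plane. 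Feeding $x\in C'$ into this: a line of $C'$ has $b$ as its only incident plane in $A$, and its at most one incident point in $A$ is, by property~4 of Definition~\ref{d:K}, incident with $b$, hence lies in $A_0$; so a line of $C'$ meets nothing in $A'$. A point of $C'$ has at most one incident line in $A$, and that line is incident with $b$ (which also lies in $A\cap\res(x)$), hence lies in $A_0$; so a point of $C'$ meets $A'$ only in $E^2$-edges to planes of $A'$. Thus $e_E=0$, and every $A'$--$C'$ edge is an $E^2$-edge from a point of $C'$ to a plane of $A'$.

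It remains to bound $f$. If $(p,l,e)$ is a complete flag with a vertex in $A'$ and a vertex in $C'$, then: its plane $e$ is incident with no plane, so lies in neither $A_0$ nor $C_0$ (neither contains a plane) and $e\neq b$ (otherwise $p,l$ would both be incident with $b$, hence both in $A_0\cup C'$, and the flag would meet $A'$ in nothing), so $e\in A'$; its line $l$ cannot lie in $C'$ (a line of $C'$ has $b$ as its only $A$-incident plane, which would force $e=b$) nor in $A'$ (else $p\in C'$, and Lemma~\ref{l:delta_1} applied to $p$ forces $l$ to be incident with $b$, hence $l\in A_0$), so $l\in A_0$; and then $p\in C'$. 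Hence every such flag has the form $(p,l,e)$ with $p\in C'$, $l\in A_0$, $e\in A'$, and the unique $A'$--$C'$ edge it contains is $(p,e)$. Since by Lemma~\ref{l:delta_1} the point $p$ has at most one incident line in $A$, the pair $(p,e)$ determines $l$, so distinct such flags have distinct $(p,e)$; thus $f\leq e_{E^2}$, and the displayed inequality follows, proving $\delta(C_0/A)\leq\delta(C_0/A_0b)$. Finally, for the ``in particular'': the argument above works verbatim with $C_0$ replaced by any finite $D'\subseteq C\cap\res(b)$, giving $\delta(D'/A)\leq\delta_1(D'/A_0)$; together with $\delta(D'/A)\geq0$ and Lemma~\ref{l:polygon} this yields $\delta(D'b/A_0b)\geq0$ for every such $D'$, i.e., $A_0b\leq C_0b$.

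The step I expect to be the main obstacle is the flag count: a priori a single $A'$--$C'$ edge could be contained in arbitrarily many complete flags, and it is precisely the ``at most one line through a point'' part of Lemma~\ref{l:delta_1} that prevents this --- without it the correction term could go the wrong way. A smaller but necessary point is the applicability of Lemmas~\ref{l:delta_1} and \ref{l:line} for $x\in C\setminus A$, i.e., that the small graphs $(A\cap\res(x))\cup\{x\}$ (and, for $x$ a line, $A\cup\{x\}$ itself) lie in $\K$ even though $\K$ is not closed under substructures: the only non-hereditary condition is property~3 of Definition~\ref{d:K} and its dual, and a short check shows it cannot fail here --- when $x$ is a point or plane, because the graph then contains only one vertex of the relevant sort.
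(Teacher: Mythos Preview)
Your proof is correct and follows essentially the same approach as the paper's: both reduce the middle inequality to a comparison of $E$-, $E^2$-edges and flags between $\hat A=A\setminus A_0b$ and the new part of $C_0$, and both use Lemmas~\ref{l:delta_1} and~\ref{l:line} (applied to vertices $x\in C_0\setminus A$, using $\delta(x/A)\geq 0$) to show that lines of $C'$ meet nothing in $\hat A$ while points of $C'$ meet $\hat A$ only in $E^2$-edges, with each such edge carrying at most one flag. Your write-up is more explicit than the paper's in two places --- the detailed case analysis pinning down the shape $(p,l,e)$ with $p\in C'$, $l\in A_0$, $e\in\hat A$ of the relevant flags, and the verification that the hypotheses of Lemmas~\ref{l:delta_1} and~\ref{l:line} (in particular membership in $\K$) are met --- but the argument is the same.
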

\begin{proof}
By symmetry we may assume that $b\in\Pi_A$. We may also assume $C=C_0\subseteq \res(b)$. Put  $\hat{A}=A\setminus A_0b$. 
Then 
\[\delta(C/A)=\delta(C/A_0b)-(2n-3)|E(C,\hat A)|-(n-1)|E^2(C,\hat A)|+(n-1)|F(C,\hat A,AC)|.\]

Note that by Lemma~\ref{l:line},  a line $c\in \L_C\cap\res(b)$ cannot be incident to any element of $\hat A$, so there are no flags in $F(c,\hat{A},AC)$. 
For a point $c\in \P_C\cap\res(b)$, by Lemma~\ref{l:delta_1} there is at most one line $a\in A$ incident to $c$ and in this case we have $a\in A_0$.  If there is no such line, there are no flags in $F(c,\hat{A},AC)$. Thus the only flags in $F(c,\hat A, AC)$ are of the form $(c,a,d)$ with $d\in\hat{A}$ and therefore $(n-1)|E^2(c,\hat{A})|\geq (n-1)|F(c,\hat{A}, AC)|$, proving the claim.
\end{proof}

\begin{lemma}\label{l:submodular}\label{c:submodular}
Suppose
\begin{enumerate}
\item  $AC\leq_\L BC$,
\item $B\cap C=\emptyset$, and 
\item $C$ is strong over $B$.
\end{enumerate}
Then $\delta(C/A)\geq\delta(C/B)$. 
\end{lemma}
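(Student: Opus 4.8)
The plan is to compute both sides of the inequality $\delta(C/A)\geq\delta(C/B)$ directly from the definition of $\delta$ and compare the terms coming from edges and flags between $C$ and the ``old'' parts $A$ and $B$ respectively. Concretely, writing the difference as
\[
\delta(C/A)-\delta(C/B)=\bigl(\delta(AC)-\delta(A)\bigr)-\bigl(\delta(BC)-\delta(B)\bigr),
\]
and observing that the contributions of the internal vertices, edges and flags of $C$ cancel, what remains is a sum of four kinds of cross-terms: edges of type $E$ between $C$ and $A$ versus between $C$ and $B$, edges of type $E^2$ likewise, and flags with two vertices in $C$ and one outside versus one vertex in $C$ and two outside (split further by whether the ``outside'' part lies in $A$ or in $B$). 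Using hypothesis~(2), $B\cap C=\emptyset$, together with hypothesis~(1), $AC\leq_\L BC$, one should be able to control the edges: the only dangerous new incidences that $BC$ has over $AC$ are those forced by $C$ being strong over $B$, and $\L$-strongness of $AC$ in $BC$ says precisely that no new line appears ``closing up'' a pair of points or planes already present in $AC$.

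The key step is to set up the right bookkeeping so that every cross-edge or cross-flag of $BC\setminus AC$ gets paid for by the positive coefficient $\delta(C/B)$ carries via hypothesis~(3). First I would use Lemma~\ref{l:line} and Lemma~\ref{l:delta_1} (via Lemma~\ref{l:strongresidue} if a point or plane of $C$ is involved) to bound, for each vertex $c\in C$, how many edges it can have to $B\setminus A$ given that $C$ is strong over $B$: these lemmas say that a vertex with non-negative predimension over a set can only be attached to that set in very restricted ways (at most one line, residue lying in that line, or at most two vertices with no line). This restricts the cross-terms to the ``submodularity defect'' configuration identified just before Definition~\ref{d:kstrong} — two points (or planes) and a line through them, with a plane (or point) on top — and the hypothesis $AC\leq_\L BC$ is exactly what forbids that configuration from straddling the boundary between $A$ and $B$. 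So after the reduction, the inequality should become termwise obvious.

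The main obstacle I expect is the non-submodularity of $\delta$ itself: one cannot simply quote a modular-law inequality, so the whole argument hinges on showing that the single bad configuration is excluded by the three hypotheses acting together. In particular I would need to be careful that a line $l\in\L_B\setminus A$ incident to a point $p\in\P_C$ and to a plane $e$ somewhere in $AC$ does not create an uncompensated $+(n-1)$ flag term; here hypothesis~(1) applied with $x_1=p$ and the relevant second point, or its dual, should rule this out, but checking that the induced-edge condition~4 of Definition~\ref{d:K} plays nicely with the flag counts $F(C,\hat A,AC)$ will be the delicate part. Once that is done, summing the remaining inequalities $(2n-3)|E(C,B\setminus A)|\geq(n-1)|F(\cdots)|$ and the analogous $E^2$ bound, together with $\delta(C/B)\geq 0$, yields the claim.
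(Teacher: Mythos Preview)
Your overall plan---expand $\delta(C/A)-\delta(C/B)$ into cross-terms and control them using Lemmas~\ref{l:delta_1}, \ref{l:line}, \ref{l:strongresidue}---is a reasonable starting point, but the claim that after excluding the ``one bad configuration'' the inequality becomes \emph{termwise obvious} is where the argument breaks down. The $\L$-strong hypothesis $AC\leq_\L BC$ only constrains \emph{lines} in $B\setminus A$; it says nothing when $b\in B\setminus A$ is a point or plane. For such $b$ the flag contribution $(n-1)|F(C,b,AbC)|$ can genuinely exceed the edge terms $(2n-3)|E(C,b)|+(n-1)|E^2(C,b)|$ on individual configurations, and no per-vertex bound obtained by applying Lemmas~\ref{l:delta_1} or~\ref{l:line} to elements of $C$ rules this out. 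Your proposed final inequality $(2n-3)|E(C,B\setminus A)|\geq(n-1)|F(\cdots)|$ is simply not true in general, and you never invoke $n\geq 6$---a sure sign the hard case is missing, since the lemma fails for small $n$.

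The paper's proof proceeds differently and does real work precisely in this case. It first reduces inductively to $B=A\cup\{b\}$ for a single vertex $b$ (peeling off points and planes before lines, so that the $\L$-strong hypothesis is preserved at each step). When $b$ is a line, the $\L$-strong hypothesis forces at most one flag in $F(C,b,AC)$ and gives an immediate contradiction; this matches your intuition. When $b$ is a point or plane, however, Lemma~\ref{l:strongresidue} is used to reduce to $A,C\subseteq\res(b)$, and then the contradiction comes from a genuine numerical argument inside the residue: from $Ab\leq AbC$ one has $\delta_1(C/A)\geq 0$, which after pruning low-valency points yields $|\P_C|\leq\frac{n-1}{n-3}|\L_C|$, while the assumed inequality forces $|\P_C|>\frac{n^2-5n+5}{n-1}|\L_C|$. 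These two bounds are incompatible exactly when $n\geq 6$. That computation, not the exclusion of the line configuration, is the heart of the proof.
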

\begin{proof}
Inductively it suffices to prove the lemma for $B=A\cup\{b\}$ since we may first remove points and planes and finally
the lines, one at a time, from $B\setminus A$. 

So suppose $\delta(C/A)<\delta(C/Ab)$ for some $C$ strong over $B=A\cup\{b\}$ and $AC\leq_\L BC$.
Then 
\[\delta(C/Ab)=\delta(C)-(2(n-1)-1)|E(C,Ab)|-(n-1)E^2(C,Ab)+(n-1)F(C,Ab)\]
\[>\delta(C/A)=\delta(C)-(2(n-1)-1)|E(C,A)|-(n-1)E^2(C,A)+(n-1)F(C,A)\]
implying
\begin{equation}
(n-1)F(C,b,AbC)>(2(n-1)-1)|E(C,b)|+(n-1)E^2(C,b).
\end{equation}

This inequality together with Lemma~\ref{l:strongresidue}
 shows that we may assume $A,C\subseteq\res(b)$.

First suppose that $b$ is a line. In this case we may rewrite the previous inequality as
\[
(n-1)F(C,b,AC)>(2(n-1)-1)|E(C,b)|.
\]
However, since $AC\leq_\L BC$ there can be at most one flag in $F(C,b,AC)$, this case cannot occur.

Hence up to  duality we may assume that $b$ is a plane. In this case we
rewrite the inequality (1) as
\begin{equation}
(n-1)|E(C,AC)|>(2n-3)|\L_C|+(n-1)|\P_C|.
\end{equation}

This inequality shows that we may assume that every point $x\in C$ has valency at least two in $CA$ and hence 
\[2|\P_C|\leq|E(C,AC)|.\]

Since  $Ab\leq AbC$ we have \[\delta(C/Ab)=\delta_1(C/A)=\delta_1(C)-(n-2)|E(C,A)|\geq 0\]
and thus
\[|E(C,A)|\leq \frac{1}{n-2}\cdot\delta_1(C)=\frac{n-1}{n-2}(|\P_C|+|\L_C|)-|E_C|.\]

Therefore we have \[2|\P_C|\leq |E(C,CA)|=|E(C,A)|+|E_C|\leq \frac{n-1}{n-2}(|\P_C|+|\L_C|).\]
Thus we conclude that
\[|\P_C|\leq \frac{n-1}{n-3}|\L_C|.\]

On the other hand, inequality (2) yields
\[\frac{(n-1)^2}{n-2}(|\P_C|+|\L_C|)\geq (n-1)|E(C,CA)|>(2n-3)|\L_C|+(n-1)|\P_C|\]
and hence
\[|\P_C|>\frac{n^2-5n+5}{n-1}|\L_C|.\]

Thus we have \[\frac{n-1}{n-3}|\L_C|\geq |\P_C|>\frac{n^2-5n+5}{n-1}|\L_C|\]
contradicting $n\geq 6$.
\end{proof}


\begin{definition}
For a proper strong extension $A\leq B$ with $\delta(B/A)=i$ we say that $B$ is  $i$-minimal over $A$ if there is no  proper subset $C\neq A$ of $ B$ such that $A\leq C$ and $C\leq B$. Thus $B$ is minimal over $A$ if $B$ is $i$-minimal over $A$ for some $i\geq 0$.
Furthermore, for disjoint sets $A, B$ we call $(A,B)$ an $i$-minimal pair if $A\leq AB$ is an $i$-minimal strong extension and every element of $A$ has a non-induced edge to an element of $B$. A $0$-minimal pair $(A,B)$ is called \emph{simple}. 
\end{definition}

We next fix a function $\mu$ from simple pairs $(A, B)$ with $AB\in \K$  into the natural numbers
satisfying the following properties:
\begin{enumerate}
\item $\mu(A, B)$ depends only on the isomorphism type of $(A, B)$;
\item $\mu(A,B)=1$ if $A$ consists of two points (planes, respectively) and $B$ consists of a single plane (point, respectively) incident with the elements of $A$;
\item $\mu(A,B)=1$ if $A=\{x,y,z\}$ with $x\in\P\cup\Pi$ and $y,z\in\res(x)$ and $B\subset\res(x)$ consists of a path from $y$ to $z$ containing exactly $n-2$ new elements;
\item $\mu(A,B)\geq 2\delta(A)$ except if $(A,B)$ is as in 2. or 3.

\end{enumerate}

Note that  $A\neq \emptyset$ by Remark~\ref{l:delta pos} and hence $\mu(A, B)\geq 2(n-1)$ for all simple pairs except for those as in 2. and 3. 
We will count the number of copies of $B$ over $A$ in a graph $N\in \K$ when $A$ and $AB$ are $\L$-strong in $N$.
For any graph $N$ and any simple pair $(A, B)$ with $A \leq_\L N$ 
we define $\chi^N(A, B)$
to be the maximal number of pairwise disjoint graphs $B'\subset N$ such that $AB'\leq_\L N$ and $B'$
and $B$ are isomorphic over $A$.  Note that in the cases where $\mu(A,B)=1$, we see that  $\chi^N(A,B)>1$ implies $N\notin\K$.
Let now $\K_\mu$ be the subclass of $\K$ consisting of those $N\in\K$ satisfying
$\chi^N (A, B) \leq \mu(A, B)$ for every simple pair $(A, B)$ with $A\leq_\L N$. Clearly
$\K_\mu$ depends only on the values $\mu(A, B)$ where $AB$ belongs to $\K$.

The following is standard:
\begin{lemma}\label{l:alg=0} Let $N\in\K_\mu$ contain two finite subgraphs $A\leq B$, $A\leq N$.
If $\delta(B/A)=0$, then $N$ contains only finitely many copies of $B$ over $A$.
\end{lemma}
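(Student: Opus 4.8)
The statement is the standard finiteness-of-$0$-extensions result in a Hrushovski construction with the $\mu$-bound, so the plan is to reduce the general $0$-extension $A \leq B$ to a union of \emph{simple pairs} and then invoke the bound $\chi^N(A,B) \leq \mu(A,B)$ on each. First I would pass to the case where $B$ is $0$-minimal over $A$: if $A \leq C \leq B$ with $\delta(B/A)=0$, then $\delta(C/A)=\delta(B/C)=0$, and finitely many copies of $C$ over $A$ together with finitely many copies of $B$ over each such $C$ give finitely many copies of $B$ over $A$; so by induction on $|B \setminus A|$ it suffices to treat $B$ $0$-minimal over $A$. By the definition of $i$-minimal pair, after throwing away from $A$ those elements having only induced edges into $B\setminus A$ (these are determined by the rest, using axiom~4 of $\K$), we may assume $(A', B')$ with $B' = B \setminus A$ is a $0$-minimal, hence \emph{simple}, pair, where $A'$ is the set of elements of $A$ with a non-induced edge to $B'$.

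The key point is then that a copy of $B$ over $A$ inside $N$ is determined, up to finitely many choices, by a copy of the simple pair $B'$ over $A'$ inside $N$: the remaining vertices of $B$ (namely $B \setminus (A \cup B')$, which is empty, since $B' = B\setminus A$) contribute nothing, and the induced $E^2$-edges from $A \setminus A'$ to $B'$ are forced by axiom~4 of Definition~\ref{d:K} once the line witnessing each such edge is present. Hence the number of copies of $B$ over $A$ in $N$ is bounded by the number of copies of the simple pair $(A', B')$ over $A'$ in $N$. Now I need $A' \leq_\L N$: since $A \leq N$ we have $A \leq_\L N$ by Definition~\ref{d:kstrong}(2), and $\L$-strongness passes to $A' \subseteq A$ because the defining condition on $\L$-strongness for $A'$ only concerns points/planes of $A'$ and lines of $N$ incident with two of them — such a line lies in $A$ by $A \leq_\L N$, and then lies in $A'$ since it has (non-induced) edges to elements of $A'$. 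Thus $\chi^N(A', B')$ is defined and $\chi^N(A',B') \leq \mu(A',B') < \infty$, so there are only finitely many pairwise disjoint copies of $B'$ over $A'$ with $A'B' \leq_\L N$; since any copy of $B$ over $A$ inside $N$ with $A \leq N$ yields such an $\L$-strong copy of $B'$ (again because $AB \leq N \Rightarrow AB \leq_\L N \Rightarrow A'B' \leq_\L N$ by the same restriction argument), and two distinct copies of $B$ over $A$ give copies of $B'$ that are equal or disjoint over $A'$ in a way controlled by the finitely many embeddings of $B'$ into the union, we conclude finiteness.

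The main obstacle I expect is the bookkeeping in the reduction to a simple pair: one must check carefully that discarding the vertices of $A$ with only induced edges to $B' = B\setminus A$ does not lose information (axiom~4 recovers them and their edges), that $\L$-strongness genuinely restricts from $A$ to $A'$ and from $AB$ to $A'B'$, and that the non-submodularity of $\delta$ does not break the inductive step $\delta(C/A)=0=\delta(B/C)$ — here it does not, because that step uses only additivity of $\delta(\cdot/\cdot)$ along $A \leq C \leq B$ together with $\delta(C/A)\geq 0$ and $\delta(B/C)\geq 0$, which hold since $A \leq C$ and $C \leq B$. A minor additional point is handling the degenerate possibility that $A' = \emptyset$; but by Remark~\ref{l:delta pos} a nonempty graph in $\K$ has positive $\delta$, so a $0$-minimal pair has $A' \neq \emptyset$, and the $\mu$-bound applies as stated.
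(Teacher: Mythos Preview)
The paper does not actually prove this lemma: its entire proof is the citation ``See [TZ], Lemma 10.4.6.'' Your plan is precisely the standard argument recorded there (decompose into $0$-minimal steps, pass to the associated simple pair, invoke the $\mu$-bound), so in spirit you are doing exactly what the paper intends.

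Two steps in your outline do not go through as written, however. First, your justification that $\L$-strongness restricts from $A$ to $A'$ is wrong: if $l\in A$ is a line through two points $x_1,x_2\in A'$, the fact that $l$ has $E$-edges to $x_1,x_2$ says nothing about edges from $l$ into $B'$, and membership in $A'$ requires a non-induced edge to $B'$, not to $A'$. So your sentence ``then lies in $A'$ since it has (non-induced) edges to elements of $A'$'' does not establish what you need. Second, and more substantially, the bound $\chi^N(A',B')\le\mu(A',B')$ controls only \emph{pairwise disjoint} copies with $A'B''\leq_\L N$; passing from this to a bound on \emph{all} copies is the real content of the lemma. In the submodular setting of [TZ] one shows that two distinct $0$-minimal copies over a strong base are automatically disjoint, and that step uses submodularity of $\delta$. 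Here $\delta$ is not submodular, so this is exactly where Lemma~\ref{c:submodular} must be invoked: each copy $B''$ of $B$ over $A$ satisfies $\delta(B''/A)=0$ and hence $B''\leq N$, so $B''\leq_\L N$, which supplies the $\leq_\L$ hypothesis needed to recover the submodular inequality for the relevant pair of copies. You flagged non-submodularity as a concern but only checked it for the chain decomposition $A\leq C\leq B$, not for this disjointness step, and your phrase ``equal or disjoint over $A'$ in a way controlled by the finitely many embeddings of $B'$ into the union'' papers over the issue rather than resolving it.
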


\begin{proof}
See \cite{TZ}, Lemma 10.4.6.
\end{proof}

\begin{definition}\label{d:freeamalgam}
For $A,B,C\in \K$ with $A\subseteq B,C$ we let $D=B\otimes_AC$ denote the \emph{free amalgam}
of $B$ and $C$ over $A$ obtained as the graph whose set of vertices is the disjoint union of
the vertices in $A,B\setminus A$ and $C\setminus A$ with edges given by the edges of $B$ and $C$ 
and induced edges arising from Condition 4 of Definition~\ref{d:K}.
\end{definition}

\begin{lemma}\label{l:strongamalgam}
Let  $C_0,C_1,C_2\in\K$ such that $C_0\leq_k C_1, C_0\leq C_2$ and $D=C_1\otimes_{C_0}C_2$. Then
$C_1\leq  D$ and $C_2\leq_k D$.
\end{lemma}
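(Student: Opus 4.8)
The plan is to verify directly that $D = C_1 \otimes_{C_0} C_2$ is in $\K$ and that the two strength claims hold, handling the two conclusions somewhat separately. For $C_1 \le D$, I would take an arbitrary finite $C \subseteq D$ and show $\delta(C/C_1) \ge 0$. Writing $C_2' = C \cap C_2$ and $C_0' = C \cap C_0$, the key observation is that in a free amalgam every edge and every flag of $D$ lives entirely inside $C_1$ or entirely inside $C_2$ (the only edges crossing the amalgam are induced $E^2$-edges forced by Condition 4 of Definition~\ref{d:K}, and these too are confined: an induced $E^2$-edge between a point $p$ and a plane $e$ needs a common line, which by Condition 1 of $\K$ is unique and so must lie on whichever side contains that line). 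Consequently $\delta(C/C_1) = \delta(CC_1) - \delta(C_1) = \delta(C_2' / C_0')$, and since $C_0 \le C_2$ gives $C_0' \le C_2'$ — more precisely $\delta(C_2'/C_0) \ge 0$, and then one uses that $C_0$ is strong in $C_2$ to get $\delta(C_2'/C_0') \ge 0$ after noting $C_0' = C_2' \cap C_0$ — we conclude $\delta(C/C_1) \ge 0$. So the real content of the first claim is the combinatorial bookkeeping that the free amalgam does not create any new edges/flags beyond the forced induced ones, together with the fact that membership in $\K$ is preserved (checking conditions 1–5 of Definition~\ref{d:K} is routine given the edge-localization, and condition 6 follows from the $\delta$-computation just sketched).

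For $C_2 \le_k D$, I would fix a subset $D_0 \subseteq D$ with $|D_0 \setminus C_2| \le k$ and show $C_2 \le C_2 D_0$. Set $D_0' = D_0 \cap C_1$, so $|D_0' \setminus C_0| \le |D_0 \setminus C_2| \le k$ (using $C_1 \cap C_2 = C_0$ inside $D$). By hypothesis $C_0 \le_k C_1$, hence $C_0 \le C_0 D_0'$, i.e. $\delta(X / C_0 D_0') \ge 0$ for every finite $X$; in particular $\delta$ of anything over $C_0 D_0'$ is nonnegative, so $C_0 D_0'$ is a strong "fattening" of $C_0$ by at most $k$ points. Now given any finite $C \subseteq C_2 D_0$, decompose it along the amalgam as before: $\delta(C / C_2 D_0) $ reduces, by the edge-localization, to $\delta((C \cap C_1 D_0')/(C_2 D_0 \cap C_1 D_0'))$ where the base is essentially $C_0 D_0'$. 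Since $C_0 \le_k C_1$ and the extension on the $C_1$-side is by at most $k$ new vertices, this quantity is $\ge 0$, giving $C_2 \le C_2 D_0$ as desired. Equivalently, one can phrase this as: $D = C_1 \otimes_{C_0} C_2$ and $C_2 D_0$ is the free amalgam $(C_0 D_0') \otimes_{C_0} C_2$ where $C_0 \le C_0 D_0'$ by the $\le_k$ hypothesis, so the first part of the lemma (the case $k = 0$, already proved) applied with base $C_0$ and factors $C_0 D_0', C_2$ yields $C_2 \le C_2 D_0'$... and one must still absorb any vertices of $D_0$ lying in $C_2$, which is automatic.

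The main obstacle I anticipate is not any single inequality but getting the edge/flag accounting across the free amalgam exactly right — in particular being careful that the induced $E^2$-edges created by Condition 4 do not secretly link the two sides and that the count $|E_D|, |E^2_D|, |F_D|$ splits cleanly as (contribution from $C_1$) $+$ (contribution from $C_2$) $-$ (contribution from $C_0$), with the induced edges assigned consistently to one side. Once that decomposition is pinned down, every $\delta$-difference telescopes to a $\delta$-difference computed entirely on the $C_2$-side over a base that the hypotheses $C_0 \le C_2$ and $C_0 \le_k C_1$ directly control, and verifying that $D \in \K$ (conditions 1–6) is then a matter of transporting each defining property through the same localization. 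A secondary point requiring care is condition 6(b) of Definition~\ref{d:K} and condition 5 (no short cycles in residues): one must check that amalgamating does not create a short cycle in the residue of a point or plane $x$ — but since $\res_D(x)$ is contained in $\res_{C_i}(x)$ for whichever side $x$ lies on (again by edge-localization, a line through $x$ cannot cross the amalgam), this is inherited from $C_1, C_2 \in \K$.
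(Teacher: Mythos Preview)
Your edge-localization claim is the central error. Induced $E^2$-edges genuinely \emph{do} cross the amalgam: if $p\in C_1\setminus C_0$ is incident to a line $l\in C_0$ and $l$ is incident to $e\in C_2\setminus C_0$, then the free amalgam creates an $E^2$-edge $(p,e)$ linking the two sides. Your parenthetical (``the line is unique and so must lie on whichever side contains that line'') does not resolve this: the line lies in $C_0$, but $p$ and $e$ lie on opposite sides. What actually makes the argument work --- and what the paper's proof uses --- is that each such crossing $E^2$-edge is accompanied by a crossing flag $(p,l,e)$, and the coefficients $-(n-1)$ and $+(n-1)$ cancel. Hence for $A\subseteq C_1$ one gets $\delta(A/C_2)=\delta(A/C_0)$, which is $\geq 0$ by $C_0\leq_k C_1$ when $|A|\leq k$; the case $A\subseteq C_2$ of any size is symmetric and gives $C_1\leq D$.

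Two further problems follow from the localization error. First, your equation $\delta(C/C_1)=\delta(C_2'/C_0')$ is not correct; the right reduction is to $\delta((C\setminus C_1)/C_0)$, with the full $C_0$ as base, not $C_0'=C\cap C_0$. Second, even if your equation held, deducing $\delta(C_2'/C_0')\geq 0$ from $C_0\leq C_2$ would require submodularity, which the paper explicitly notes fails for this $\delta$. Finally, you spend effort verifying $D\in\K$, but the lemma does not assert this and in fact $D$ need not lie in $\K$: the very next lemma in the paper (Lemma~\ref{l:amalgaminK}) is devoted to analysing when $D\notin\K$. In particular your argument for condition~5 is wrong: for $x\in C_0$ the residue $\res_D(x)$ meets both $C_1\setminus C_0$ and $C_2\setminus C_0$, so it is not contained in either $\res_{C_i}(x)$.
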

\begin{proof}
For any $A\subset C_1, |A|\leq k,$ we have  $\delta(A/C_0)\geq 0$. 
The only edges between $A$ and $C_2\setminus C_0$
are those induced by flags containing a line from $C_0$. Hence
we have $\delta(A/C_2)= \delta(A/C_0)\geq 0$. 
For $A\subset C_2$ of any size the same argument shows that $C_1\leq D$.
\end{proof}

\begin{remark}\label{r:addition} Note in particular that if $A\leq B,C$, then 
$\delta(B\otimes_AC)=\delta(AB)+\delta(AC)-\delta(A)$.
\end{remark}

\begin{lemma}\label{l:amalgaminK}
 Let  $C_0,C_1,C_2\in\K$ such that $ C_0\subseteq C_1,C_2$ and suppose that $C_2$ is $i$-minimal over $C_0$ and $C_0\leq_{n-1} C_1$. If $D=C_1\otimes_{C_0}C_2\not\in\K$, then $i=0$. If moreover $C_0\leq C_1$, then $C_1$ strongly contains an isomorphic copy of $C_2$ over~$C_0$.
\end{lemma}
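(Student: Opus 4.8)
The plan is to analyze which of the defining conditions of $\K$ can fail in $D = C_1 \otimes_{C_0} C_2$. Conditions 1--4 of Definition~\ref{d:K} are preserved by free amalgamation essentially by construction (the only new edges in $D$ are the induced $E^2$-edges forced by Condition 4, and one checks that these do not create a second line through two points, a second plane through two skew points, etc., because any such configuration would already live inside $C_1$ or $C_2$ via $C_0$). Condition 5 — no short cycles in a residue $\res(x)$ — needs a short argument: a short cycle through a point or plane $x$ would have to use vertices from both $C_1 \setminus C_0$ and $C_2 \setminus C_0$, hence pass through $C_0 \cap \res(x)$; but by Lemma~\ref{l:delta_1} (applied via $C_0 \le_{n-1} C_1$, so $C_0 \cap \res(x)$ is ``thin'' inside $C_1$) the intersection $C_0 \cap \res(x)$ is too small to close up a cycle of length $< 2n$ together with the two pieces. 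So the interesting failure is in Condition 6, i.e. $\delta(B) \le 3(n-1)$ for some $B \subseteq D$ with $|B| \ge 3$, or a failure of 6(b).

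Next I would reduce to a minimal witness. Suppose $B \subseteq D$ witnesses a failure of Condition 6(a), chosen with $|B|$ minimal. Write $B_1 = B \cap C_1$ and $B_2 = B \cap C_2$, and $B_0 = B \cap C_0$. Since $C_1, C_2 \in \K$ individually, $B$ must genuinely straddle the amalgam: $B \not\subseteq C_1$ and $B \not\subseteq C_2$. The key computation is Remark~\ref{r:addition}-style additivity together with $C_0 \le_{n-1} C_1$: using that $|B_2 \setminus B_0| \le |C_2 \setminus C_0|$ and, crucially, that $C_2$ is $i$-minimal over $C_0$, I want to show $\delta(B_2 / B_0) \ge \delta(B_2 / C_0)$ when $B_2 \cup C_0 \in \K$, and then play this off against $\delta(B_1/B_0)$. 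Concretely, writing $\delta(B) = \delta(B_1) + \delta(B_2/B_0) - (\text{overlap corrections from induced edges})$, the only cross-terms between $B_1 \setminus C_0$ and $B_2 \setminus C_0$ are induced $E^2$-edges through lines of $B_0$, and each such edge contributes a flag that cancels it in $\delta$; so in fact $\delta(B) = \delta(B_1) + \delta(B_2/B_0)$. Now if $\delta(B_2/B_0) \ge 0$ we would get $\delta(B) \ge \delta(B_1) \ge 3(n-1)+1$ (using Remark~\ref{l:delta pos} if $|B_1| \ge 3$, and handling $|B_1| \le 2$ separately), a contradiction; hence $\delta(B_2/B_0) < 0$. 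But $C_0 \le C_2$ forces $\delta(B_2/C_0) \ge 0$, so $B_0 \subsetneq C_0$ and the failure of submodularity is being exploited. This is exactly where Lemma~\ref{c:submodular} enters: with $A := B_0$, $B := C_0$, $C := B_2$, the hypotheses $AC \le_\L BC$ (from $C_0 \le_{n-1} C_1$, which gives $\le_\L$ after the amalgam), $B \cap C = \emptyset$, and $C$ strong over $B$ — wait, we need $B_2$ strong over $C_0$, which holds since $C_0 \le C_2$ — give $\delta(B_2/B_0) \ge \delta(B_2/C_0) \ge 0$, the desired contradiction. So no such $B$ exists unless the strong-extension hypotheses degrade, which pins the situation down to $\delta(C_2/C_0) = i = 0$; a parallel but easier argument disposes of a failure of 6(b), using Remark~\ref{r:projplane}/Lemma~\ref{l:polygon} to control $\delta_1$ inside residues.

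For the final clause: assuming $i = 0$ and $C_0 \le C_1$, I want to extract from the failure a copy of $C_2$ inside $C_1$. Since $D \notin \K$, some $B \subseteq D$ violates Condition 6; by the above the violation forces $\delta(B_2/B_0) < 0$ with $B_0 \subsetneq C_0$, and $0$-minimality of $C_2$ over $C_0$ means $C_2$ has no proper strong intermediate extension, so $B_2$ cannot be a proper strong substructure of $C_2$ over $B_0$-closure; combined with Lemma~\ref{l:alg=0} (finitely many copies of a $0$-extension) and the fact that $\delta(C_2/C_0)=0$, one argues that the only way the amalgam can fail is that $C_1$ already realizes $C_2$ strongly over $C_0$ — otherwise $D$ would be a legitimate strong extension. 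The main obstacle I anticipate is the bookkeeping in the $\delta(B) = \delta(B_1) + \delta(B_2/B_0)$ step: one must verify carefully that \emph{every} cross-edge in $D$ is an induced $E^2$-edge accounted for by a flag, i.e. that no genuine $E$-edge or un-flagged $E^2$-edge is created, and that the residue-cycle condition 5 interacts correctly when $B_0$ contains a line lying in a residue. The non-submodularity of $\delta$ means one cannot be cavalier here, and getting the case $|B_1| \le 2$ (or $|B_2 \setminus B_0|$ very small) right — where the uniform bound $\delta(B_1) \ge 3(n-1)+1$ is unavailable — will require invoking the low-end estimates of Remark~\ref{l:delta pos} and possibly Lemmas~\ref{l:delta_1} and~\ref{l:line} directly.
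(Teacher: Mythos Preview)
You have the architecture of the argument inverted. In the paper's proof, Condition~6 of Definition~\ref{d:K} \emph{never} fails in $D$: using Lemma~\ref{l:strongamalgam} one has $C_1\leq D$, so for $A\subseteq D$ with $|A\cap C_1|\geq 3$ one gets $\delta(A)\geq\delta(A\cap C_1)\geq 3(n-1)+1$ directly, and the remaining small cases $|A\cap C_i|\leq 2$ are handled by Remark~\ref{r:addition} and Remark~\ref{l:delta pos}. Condition~6(b) likewise follows from Lemma~\ref{l:strongamalgam}. So your attempt to squeeze $i=0$ out of a failure of Condition~6 is aiming at a case that does not occur; and indeed your own chain of inequalities (via Lemma~\ref{c:submodular}) ends in a \emph{contradiction}, which is the right conclusion for Condition~6 --- there is no ``unless the strong-extension hypotheses degrade'' escape hatch that produces $i=0$.

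The actual content lies in the conditions you dismissed. Condition~3 can genuinely fail: two planes $a_0,a'_0\in C_0$ may acquire two common points $b_1\in C_1\setminus C_0$ and $b_2\in C_2\setminus C_0$. Minimality of $C_2$ over $C_0$ then forces $C_2=C_0\cup\{b_2\}$, so $i=0$, and $C_0\cup\{b_1\}\subseteq C_1$ is the required copy. Condition~5 can also fail: a $2k$-cycle with $k<n$ in $\res(x)$ for some $x\in C_0$ decomposes into paths $\gamma_1\subseteq C_1$, $\gamma_2\subseteq C_2$ with endpoints in $C_0$. By Remark~\ref{r:pathextension} each $\gamma_j$ has length $\geq n-1$, so both have length exactly $n-1$; minimality then gives $C_2\setminus C_0=\gamma_2\setminus\{a,b\}$, hence $i=0$, and $\gamma_1$ is the copy. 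Your sketch for Condition~5 (``the intersection $C_0\cap\res(x)$ is too small to close up a cycle'') is not what happens: the cycle \emph{can} close up, and it is precisely this closing-up that pins down $C_2$ as a $0$-minimal path extension with an isomorphic twin $\gamma_1$ in $C_1$. Finally, your application of Lemma~\ref{c:submodular} with $A=B_0$, $B=C_0$, $C=B_2\setminus B_0$ does not verify the hypothesis $AC\leq_\L BC$, i.e.\ $B_2\leq_\L C_0B_2$; a line in $C_0\setminus B_0$ through two points of $B_2$ would break this.
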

\begin{proof}
  Suppose $D$ violates condition 1 of Definition~\ref{d:K}. Then there are two lines containing
  two points (or planes, respectively), yielding a $4$-cycle.
  This cycle must consist of $a_0,a'_0$ in $C_0$,
  $b_1\in C_1\setminus C_0$ and $b_2\in C_2\setminus C_0$ such that
  $b_1$ and $b_2$ are connected with both $a_0$ and $a'_0$, contradicting the
  assumption that $C_2$ is strong over $C_0$.
  
  Suppose $D$ violates Condition 2 of Definition~\ref{d:K}, so up to duality 
  there exists a cycle in $D$
  consisting of two points contained in a common line
  and in a common plane  (possibly induced by lines in $C_0$). Since 
  $C_0\leq_1 C_1, C_2$ it easily follows that such a cycle has to be completely contained in
  $C_1$ or $C_2$, which is impossible.
  
  If $D$ violates condition 3 of Definition~\ref{d:K}, up to duality we may assume that 
  there are two planes in $D$ containing two points which are not
  contained in a common line, yielding a cycle, some of whose edges might be induced by
  a line in $C_0$. Since $C_0\leq C_2$ and $C_0\leq_{n-1} C_1$ it easily follows
  that -- up to duality -- we have two planes $a_0,a'_0$ in $C_0$, and points
  $b_1\in C_1\setminus C_0$ and $b_2\in C_2\setminus C_0$ such that
  $b_1$ and $b_2$ are connected with both $a_0$ and $a'_0$.
  Now  minimality implies $C_2=C_0\cup\{b_2\}$. So $C_2$ is $0$-minimal
  over $C_0$ and $C_0\cup\{b_1\}$ is
  a copy of $C_2$ over $C_0$. If furthermore $C_0\leq C_1$, we have $C_0\cup\{b_1\}$ 
  strongly contained in $C_1$. 
  
  $D$ satisfies condition 4 of Definition~\ref{d:K} by definition of the free amalgam.
   
  If Condition 5 of Definition~\ref{d:K} fails in $D$, then there 
  is some $x\in D$ containing
  a $2k$-cycle $\gamma$ in $\res(x)$ with $k<n$. By Lemmas~\ref{l:delta_1} and~\ref{l:line} 
  we must have $x\in C_0$. Since $C_0, C_1$ and $C_2$ are in $\K$,  there must be proper 
  non-empty  connected pieces $\gamma_1$ and $\gamma_2$ of $\gamma$ such that
  $\gamma_1\subset C_1,\gamma_2\subset C_2$ and $\gamma_1, \gamma_2\not\subseteq C_0$.
  Furthermore we may assume that  only the endpoints of $\gamma_1$ and $\gamma_2$ 
  belong to  $C_0$.   Since $\gamma$ 
  has length $2k<2n$, at least one of $\gamma_1,\gamma_2$ has length at most $n-1$.
  On the other hand, by Remark~\ref{r:pathextension}
  the length of $\gamma_1$ and $\gamma_2$
  is at least $n-1$. Since $\gamma$ has even length $2k<2n$, it follows that 
  both $\gamma_1$ and $\gamma_2$ have length $n-1$ and $\gamma=\gamma_1\gamma_2$. 
  Let $a,b\in C_0$ be the common endpoints of $\gamma_1,\gamma_2$.
  Since $C_2$ is minimal over $C_0$, it follows that
  $C_2\setminus C_0$ consists of this path $\gamma_2\setminus \{a,b\} $ only
  with no further edges between $\gamma_2$ and $C_0$.
  Then $C_2$ is also $0$-minimal over $C_0$ 
  and $\gamma_1\subset C_1$ is isomorphic  to $\gamma_2$ over $C_0$. 
  Since $\delta(\gamma_2/C_0)=0$  it follows 
  that if $C_0\leq C_1$, then also $C_0\cup\gamma_2$ is strong in $C_1$.
  
  For Condition 6a), suppose $A\subset D$ with $|A|\geq 3, A\in\K$. If $|A\cap C_1|\geq 3$,
  then $\delta(A)\geq\delta(A\cap C_1)\geq 3(n-1)+1$ by Lemma~\ref{l:strongamalgam}. 
  If $|A\cap C_1|\leq 2$ and
  $|A\cap C_2|\geq 3$, then again $\delta(A)\geq 3(n-1)+1$. So 
  suppose $|A\cap C_i|\leq 2, i=1,2$ and so $|A|\leq 4$. 
  Since $\delta(A\cap C_i)\geq 3(n-1), i=1,2$, the claim follows
  easily from the Remark~\ref{r:addition} by considering the possible cases.
  Condition 6 b) of Definition~\ref{d:K}  follows from Lemma~\ref{l:strongamalgam}.
\end{proof}
We will frequently use the following

\begin{lemma}\label{l:closer}
 Suppose we have $A, AB\leq_\L N$ and
 there are $x\in A, y\in B$ with an $E^2$-edge between $x$ and $y$ induced by a line $z\not\in AB$. Then $\delta(B/Az)\leq\delta(B/A)-(n-2)$.
\end{lemma}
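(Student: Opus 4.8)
The plan is to compute $\delta(B/Az)$ directly and compare it with $\delta(B/A)$, keeping careful track of the edges and flags that involve the line $z$. Since $z \notin AB$, adding $z$ to the base affects $\delta(B/\cdot)$ only through the edges and flags between $B$ and $z$, together with any new flags in $F(B, z, ABz)$ that use a third vertex from $A$ or $B$. Concretely, I would write
\[
\delta(B/Az) = \delta(B/A) - (2(n-1)-1)\,|E(B,z)| - (n-1)\,|E^2(B,z)| + (n-1)\,|F(B,z,ABz)|,
\]
so the claim reduces to showing that the total correction term
\[
-(2n-3)\,|E(B,z)| - (n-1)\,|E^2(B,z)| + (n-1)\,|F(B,z,ABz)|
\]
is at most $-(n-2)$.

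The key point is the hypothesis that $x \in A$, $y \in B$, and the $E^2$-edge between $x$ and $y$ is induced by the line $z$: this means $(x,z)$ and $(z,y)$ are edges (of type $E$), and by Condition 4 of Definition~\ref{d:K} applied in $N$, $(x,y)$ is an $E^2$-edge. The crucial observations are: (i) $z$ must be a line, and since $y$ is incident with $z$, we get at least one $E$-edge in $E(B,z)$, namely $(z,y)$ (if $y$ is a point) or $(y,z)$ (if $y$ is a plane); (ii) the edge $(x,z)$ lies in $E(A,z)$, \emph{not} in $E(B,z)$, so it does not help the right-hand side, but it does force $x \in \res(z)$; and (iii) crucially, every flag in $F(B,z,ABz)$ must contain a vertex incident with $z$ of a sort different from $y$'s, so it pairs $y$ with some point or plane. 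I would argue, using that $A \leq_\L N$ and Lemma~\ref{l:line} (which bounds $|\res(z) \cap A|$, once we note $A \cup \{z\}$ behaves like a strong-type configuration), that the only flag through $z$ and $y$ with third vertex in $A$ is the one using $x$ itself, i.e.\ $|F(B,z,ABz)| \leq |E^2(B,z)|$, so that the term $(n-1)|F(B,z,ABz)| - (n-1)|E^2(B,z)| \leq 0$. Combined with $|E(B,z)| \geq 1$ from (i), this gives the correction term $\leq -(2n-3) < -(n-2)$ for $n \geq 6$, which is more than enough.

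The main obstacle I anticipate is pinning down the bound $|F(B,z,ABz)| \le |E^2(B,z)|$ and making sure I am not double-counting or missing flags whose third vertex lies in $B$ rather than $A$: a flag $(y, z, w)$ with $w \in B$ is itself an $E^2$-edge $(y,w)$ contributing to $|E^2(B,z)|$ only if $w \in \res(z)$, so I need the flag structure of $\K$ (Condition 4 again, plus the bipartite/no-short-cycle conditions 1--5) to ensure each such flag is matched by an $E^2$-edge. The cleanest route is probably to observe that every flag counted in $F(B,z,ABz)$ has the form $(v, z, w)$ with $v$ and $w$ of opposite sorts among points/planes, both incident with $z$; the one incident with $z$ on the ``$B$-side'' contributes an edge to $E(B,z)$ or an $E^2$-edge to $E^2(B,z)$, and a short case analysis (is the third vertex a point, a plane, in $A$ or in $B$) closes the gap. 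If even the crude estimate $|F(B,z,ABz)| \leq |E(B,z)| + |E^2(B,z)|$ turns out easier to justify, that still yields correction term $\leq -(2n-3)|E(B,z)| + (n-1)|E(B,z)| = -(n-2)|E(B,z)| \leq -(n-2)$, which is exactly the stated bound; so I would fall back on that weaker but sufficient inequality if the sharper one causes trouble.
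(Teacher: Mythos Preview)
Your overall shape is right --- computing $\delta(B/Az)-\delta(B/A)$ is exactly the same as computing $\delta(z/AB)-\delta(z/A)$, which is what the paper does --- but you miss the one observation that makes the computation a triviality, and in its absence your bookkeeping goes wrong.

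The key point is purely definitional: since $AB\leq_\L N$ and $z\notin AB$ is a line, the definition of $\leq_\L$ forbids $z$ from being incident to two points (or two planes) of $AB$. Hence $\res(z)\cap AB=\{x,y\}$ exactly. This is what the paper uses; no appeal to Lemma~\ref{l:line} is needed (and that lemma is inapplicable anyway, since its hypothesis $\delta(z/A)\geq 0$ is not given and would be circular to verify). Once you know $\res(z)\cap AB=\{x,y\}$, your correction term has $|E(B,z)|=1$, $|E^2(B,z)|=0$ (a line has no $E^2$-edges), and there is exactly one flag $(x,z,y)$, so the correction equals $-(2n-3)+(n-1)=-(n-2)$ on the nose. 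Equivalently, as in the paper, $\delta(z/A)=n-1$ and $\delta(z/AB)=1$, whence $\delta(B/Az)=\delta(AB)+1-(\delta(A)+n-1)=\delta(B/A)-(n-2)$.

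Two concrete errors in your plan: first, the claim $|F(B,z,ABz)|\leq |E^2(B,z)|$ is false, since $z$ is a line and so $|E^2(B,z)|=0$ while the flag $(x,z,y)$ shows $|F(B,z,ABz)|\geq 1$. Second, your fallback inequality $|F(B,z,ABz)|\leq |E(B,z)|+|E^2(B,z)|$ is not automatic: a single edge $(y,z)\in E(B,z)$ could in principle lie in many flags $(y,z,e_1),(y,z,e_2),\ldots$ if $z$ had several plane-neighbours in $A$. Ruling this out is precisely the content of the $\leq_\L$ observation above, so your fallback needs that observation too --- and once you have it, the fallback is unnecessary.
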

\begin{proof}

 Since $AB\leq_\L N$, the line $z$  is connected only to $x$ and to $y$ in $AB$,
 so $\delta(z/A)=n-1$ and $\delta(z/AB)= 1$.
Now $\delta(AzB)= \delta(AB)+1$ and hence \[\delta(B/Az)\leq \delta(AB)+1-(\delta(A)+n-1)=\delta(B/A)-(n-2).\]
\end{proof}

\begin{lemma}\label{l:locationofsimplepair}
Suppose we have $A,B,C\in\Kmu$ such that $B$ is $i$-minimal over $A$ for some $i\geq 1$ and $A\leq_{|B|} C$. Put $D=B\otimes_AC$. If $(A',B')$ is a simple pair with  $A'\leq_\L AB, A'\not\subseteq A, |B'|\leq |B|,$ and $A'B'\leq_\L D$, then $A'B'\subseteq AB$.
\end{lemma}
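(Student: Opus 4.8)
The plan is to argue by contradiction: suppose $(A',B')$ is a simple pair with $A'\leq_\L AB$, $A'\not\subseteq A$, $|B'|\le|B|$ and $A'B'\leq_\L D$, but $A'B'\not\subseteq AB$. Since $A'\subseteq AB$ and only $B'$ can stick out, there must be some element $b'\in B'$ lying in $C\setminus A$. I would first use Lemma~\ref{l:strongamalgam} (applied with $C_0=A$, $C_1=AB$, $C_2=C$, which gives $AB\le D$ and $C\le_{|B|}D$) to control the $\delta$-behaviour of subsets of $D$ crossing the amalgam. The key point to extract is that $(B'\cap C)\setminus A$ is a set of size at most $|B|$ sitting inside $C$, so $\delta\big((B'\cap C)\setminus A \,/\, A\big)\ge 0$, and in fact $\delta\big((B'\cap C)\setminus A \,/\, (A'\cap AB)(B'\cap AB)\big)\ge 0$ since $A'B'\leq_\L D$ controls how the two halves of $B'$ are glued (the only cross-edges are $E^2$-edges induced by lines in $A$, and these are already present in $AB$).

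The heart of the argument is then to play this against the hypothesis that $A'\le A'B'$ is a $0$-minimal extension. Write $B' = B'_1 \cup B'_2$ where $B'_1 = B'\cap AB$ and $B'_2 = B'\setminus AB \subseteq C\setminus A$, and set $C' = A'B'_1$. I would show that $A' \le C' \le A'B'$: the first inequality uses $A' \leq_\L AB$ together with Lemma~\ref{l:strongresidue}/Lemma~\ref{l:submodular}-type submodularity to reduce the failure of strongness to the exceptional line-configuration, which is ruled out by $A'\not\subseteq A$ and the structure of $0$-minimal pairs; the second inequality, $C'\le A'B'$, follows from $C\le_{|B|} D$ restricted to $B'_2$, since all edges from $B'_2$ to $C'$ factor through $A$ (as $A'B'\leq_\L D$). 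By $0$-minimality of the pair $(A',B')$ we must have $C' = A'$ or $C' = A'B'$. The case $C'=A'B'$ gives $B'_2=\emptyset$, i.e. $B'\subseteq AB$, hence $A'B'\subseteq AB$, contradiction. The case $C'=A'$ forces $B'_1\subseteq A'\subseteq AB$, so all of $B'$ except possibly elements already in $A'$ lies in $C\setminus A$; but then $(A',B')$ is realized with $B'$ essentially inside $C$, and I would derive a contradiction with $A'\not\subseteq A$ by noting that some element of $A'\setminus A$ lies in $B\setminus A$ and, by the definition of a minimal pair, has a non-induced edge to some $b'\in B'\subseteq C\setminus A$ — but there are no non-induced edges between $B\setminus A$ and $C\setminus A$ in the free amalgam $D$, contradiction.

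The main obstacle I expect is the bookkeeping around induced versus non-induced edges across the amalgam: one has to be careful that $A'B'\leq_\L D$ really does force every $E^2$-edge between the $AB$-part and the $C$-part of $B'$ to be induced by a line of $A$ (not of $B\setminus A$ or $C\setminus A$), and that the non-induced edges demanded by the definition of an $i$-minimal \emph{pair} cannot be supplied across the free amalgam. Here the hypotheses $A'\leq_\L AB$ and $|B'|\le|B|$ are what make Lemmas~\ref{l:delta_1}, \ref{l:line} and \ref{l:closer} applicable to pin down the cross-edges; invoking Lemma~\ref{l:closer} shows that any non-induced $E^2$-edge whose inducing line is missing would drop $\delta(B'/A'\cdot\text{line})$ by $n-2$, which together with $|B'|\le|B|$ and $i\ge 1$ for the ambient pair $(A,B)$ yields the numerical contradiction. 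Once the edge analysis is done, the minimality dichotomy closes the argument quickly.
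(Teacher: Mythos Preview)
Your overall plan is close to the paper's proof: both arguments split $B'$ along the amalgam into $B'_1=B'\cap AB$ and $B'_2=B'\setminus AB\subseteq C\setminus A$, and both ultimately hinge on the combination of $0$-minimality of $(A',B')$ with Corollary~\ref{c:submodular} and the fact that $AB\leq_{|B|}D$. Your route (show $A'\leq A'B'_1\leq A'B'$, then invoke minimality) and the paper's route (use minimality to get $\delta(B'_1/A')>0$, hence $\delta(B'_2/A'B'_1)<0$, then submodularity gives $\delta(B'_2/AB)<0$) are really the same argument read in opposite directions.

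There is, however, a genuine gap in your treatment of the case $C'=A'$ (i.e.\ $B'\subseteq C\setminus A$). You argue that some $x\in A'\setminus A\subseteq B$ has a \emph{non-induced} edge to some $y\in B'$, and that there are no non-induced edges between $B\setminus A$ and $C\setminus A$ in $D$. The problem is that ``non-induced'' in the definition of a simple pair means non-induced \emph{in $A'B'$}, not in $D$. In the free amalgam there certainly can be an $E^2$-edge from $x\in B\setminus A$ to $y\in C\setminus A$ induced by a line $z\in A$; if $z\notin A'B'$ this edge is non-induced in $A'B'$ and perfectly legitimate for the simple-pair condition. So the quick contradiction fails. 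The paper closes this case by observing that such an edge forces $x$ to be $E$-adjacent to $z\in A$, hence $\delta(x/A)\leq 1$; together with $i$-minimality ($i\geq 1$) this yields $B=\{x\}$ and then $|B'|\leq|B|=1$ gives $B'=\{y\}$. Now Lemma~\ref{l:closer} produces $\delta(y/A'z)\leq -(n-2)$, while Corollary~\ref{c:submodular} together with $AB\leq_{1}D$ gives $\delta(y/A'z)\geq\delta(y/AB)\geq 0$. You do name Lemma~\ref{l:closer} as the relevant tool in your final paragraph, but the argument you sketch above it does not actually use it, and the step ``$i\geq 1$ yields the numerical contradiction'' is where the paper instead extracts $B=\{x\}$ from minimality.

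Two smaller points. First, your application of Lemma~\ref{l:strongamalgam} has the roles reversed: with $C_0=A$, the hypothesis you have is $A\leq_{|B|}C$ and $A\leq AB$, so the conclusion is $C\leq D$ and $AB\leq_{|B|}D$ (not $AB\leq D$ and $C\leq_{|B|}D$). It is $AB\leq_{|B|}D$ that you need. Second, your justification for $C'\leq A'B'$ (``all edges from $B'_2$ to $C'$ factor through $A$'') is not enough on its own; the correct argument is Corollary~\ref{c:submodular} applied with the smaller set $C'$, the larger set $AB$, and $Y\subseteq B'_2$, using $A'B'\leq_\L D$ to check the $\L$-strongness hypothesis and $AB\leq_{|B|}D$ for the strongness of $Y$ over $AB$.
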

\begin{proof}
Suppose $(A',B')$ is a simple pair with $A'\leq_\L  AB,
A'B'\leq_\L D, |B'|\leq |B|$, and $B'\not\subseteq AB$.
Recall that any element in $ A'$ has an edge to some element in $B'$. Suppose $x\in A'\cap B$
is connected to $y\in B'\setminus AB$. Such an edge must be induced by a line  $z\in A$ and  so  $\delta(x/A)\leq 1$. Since $B$ is $i$-minimal over $A$ for some $i\geq 1$ it follows that $i=1$ and $B=\{x\}$. Then $B'=\{y\}\subset C\setminus A$. Since $B'$ is $0$-minimal over $A'$ we have $\delta(y/A'z)<0$ by Lemma~\ref{l:closer}. However we have $A\leq_1 C$ contradicting Corollary~\ref{c:submodular}.

Hence $y\in B'\cap AB\neq\emptyset$. Thus $\delta(B'\cap AB/A')>0$ and $\delta(B'/(A'B')\cap (AB))<0$.
Now $A'B'\leq_\L D$ and Corollary~\ref{c:submodular} imply  $\delta(B'/AB)<0$ contradicting $AB\leq_{|B|} D$ by Lemma~\ref{l:strongamalgam}.
\end{proof}

The following lemma will be crucial:
\begin{lemma}\label{l:main} Let $M$ be in $\K_\mu$ and let $A\leq AB$ be a minimal
extension with $A\subseteq M$. If $N=M\otimes_A AB\notin \K_\mu$ is witnessed by
$\chi^N (A' , B' ) >\mu (A' , B' )$ for some simple pair $(A',B')$ with $A'\leq_\L N$, there
are two possibilities for $(A', B')$:
\begin{enumerate}
\item $A'\subseteq A$ and $B$ is an isomorphic copy of $B'$ over $A$. In particular, $B$ is $0$-minimal over $A$;
\item\begin{enumerate}
    \item $B$ contains an isomorphic copy of $B'$ over $A'$ and
    \item $A'$ is contained in $A\cup B$, but not a subset of $A$.
\end{enumerate}
\end{enumerate}
\end{lemma}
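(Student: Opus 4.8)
The plan is to analyze where the failure of $N = M \otimes_A AB \in \K_\mu$ can come from, exploiting that $M \in \K_\mu$ and that $AB$ is minimal over $A$. First I would invoke Lemma~\ref{l:amalgaminK} (with $C_0 = A$, $C_1 = M$, $C_2 = AB$): since $A \leq_{n-1} M$ and $AB$ is minimal over $A$, either $N \in \K$, or $AB$ is $0$-minimal over $A$ and $M$ strongly contains a copy of $B$ over $A$; in the latter case, reasoning about the witnessing copies of $B'$ will land us in case 1. So the interesting situation is $N \in \K$ but $N \notin \K_\mu$, witnessed by $\chi^N(A',B') > \mu(A',B')$ for a simple pair $(A',B')$ with $A' \leq_\L N$. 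Since $M \in \K_\mu$, at least one of the witnessing disjoint copies $B'_1, \dots, B'_m$ of $B'$ over $A'$ (with $m > \mu(A',B')$) must fail to lie inside $M$; and since adding $AB$ over $A$ adds $|B \setminus A|$ vertices, most copies still lie in $M$ unless $m$ is small — but in any case at least one copy, say $B'_j$, meets $B \setminus A$ nontrivially.

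Next I would pin down the location of $A'$. The key point is that every element of $A'$ has a non-induced edge to an element of the (fixed) copy $B'$; I would argue, copy by copy, using $A' \leq_\L N$ together with Lemma~\ref{l:locationofsimplepair} (or a direct version of its argument), that $A'$ cannot spread out over both $M \setminus A$ and $B \setminus A$ in an uncontrolled way. Concretely: if $A' \subseteq A$, then because the free amalgam only adds induced edges between $M \setminus A$ and $B \setminus A$ through lines in $A$, any copy $B'_j$ of $B'$ over $A'$ that meets $B \setminus A$ must, by $A'B'_j \leq_\L N$ and minimality of $B'_j$ over $A'$, actually be contained in $AB$; combining with strongness ($A \leq AB$ or the $0$-minimality extracted from Lemma~\ref{l:amalgaminK}) and comparing $\delta(B'_j / A')$ with $\delta(B/A)$, I would conclude that $B'_j$ is an isomorphic copy of $B$ over $A$, giving case 1 (and forcing $B$ to be $0$-minimal over $A$ since $\delta(B/A) = \delta(B'_j/A') = 0$). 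If instead $A' \not\subseteq A$, I would show $A' \subseteq AB$: an element $x \in A' \cap (M \setminus A)$ connected into $B \setminus A$ would need an induced edge through a line of $A$, forcing $\delta(x/A) \leq 1$ and hence (minimality of $B'$, the fact that $A' \leq_\L$) a contradiction with $A \leq_1 M$ via Lemma~\ref{l:submodular}, exactly as in the proof of Lemma~\ref{l:locationofsimplepair}. So $A' \subseteq AB$ with $A' \not\subseteq A$, which is case 2(b); and then the witnessing copy $B'_j \subseteq N$ meeting $B \setminus A$, being $0$-minimal over $A' \subseteq AB$ with $A'B'_j \leq_\L N$, must lie in $AB$ by Lemma~\ref{l:locationofsimplepair} applied with the roles set up appropriately — hence $B'_j \subseteq B$ (up to the $A'$-part), giving case 2(a).

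The main obstacle I anticipate is the bookkeeping in the borderline configurations where $A'$ straddles $A$, $M \setminus A$, and $B \setminus A$ simultaneously, and where some witnessing copies $B'_j$ lie in $M$ while others cross into $B \setminus A$ — one has to rule out that the extra copies needed to violate the $\mu$-bound are manufactured partly from $M$ and partly from the new vertices. The clean way to handle this is: fix the copy $B'$ used to define the isomorphism type, observe every vertex of $A'$ has a non-induced edge to $B'$, and then do a careful case split on which part of $N$ that vertex of $B'$ lies in, each time pushing through $A' \leq_\L N$, the description of edges added by $\otimes_A$, and Lemma~\ref{l:submodular}/Lemma~\ref{l:strongamalgam} to force either $A' \subseteq A$ or $A' \subseteq AB$. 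Once $A'$ is located, identifying the offending copy $B'_j$ and matching $\delta$-values to get "$B$ is a copy of $B'$" (case 1) or "$B$ contains a copy of $B'$" (case 2a) is then comparatively routine, using $0$-minimality and Lemma~\ref{l:alg=0}-type counting only implicitly.
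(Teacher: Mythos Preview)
Your plan misses the one ingredient that actually drives Case~2: the numerical hypothesis $\mu(A',B')\geq 2\delta(A')$ from the definition of $\mu$. The paper's argument in the case $A'\not\subseteq M$ is a counting argument. Fixing $x\in A'\cap B$, it bounds the number $k$ of disjoint copies of $B'$ over $A'$ lying entirely in $M$ (all edges from $x$ to these copies are induced by a single line $z\in A$ since $\delta(x/A)\geq 0$, and Lemma~\ref{l:closer} then gives $k<\delta(A')/2$), bounds the number $l$ of copies meeting both $M$ and $B$ (submodularity gives $l\leq\delta(A')$), and concludes that fewer than $\tfrac{3}{2}\delta(A')$ copies are not contained in $B$. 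Only now does the assumption $\chi^N(A',B')>\mu(A',B')\geq 2\delta(A')$ force more than $\delta(A')/2$ copies to lie inside $B$; a second counting step shows that if $A'\not\subseteq AB$ then fewer than $\delta(A')/2$ copies could lie in $B$, completing the argument.

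Your proposed shortcut---rule out $x\in A'\cap(M\setminus A)$ by saying such $x$ must be ``connected into $B\setminus A$'' via an induced edge---breaks exactly here: there is no a~priori reason for the element of the copy $B'_j$ adjacent to $x$ to lie in $B\setminus A$; it could lie in $M$, and for copies contained in $M$ this is exactly what happens. The argument of Lemma~\ref{l:locationofsimplepair} you invoke does not transfer: that lemma assumes $B$ is $i$-minimal with $i\geq 1$ and $A'\leq_\L AB$, neither of which is available here. Also note the paper's case split is $A'\subseteq M$ versus $A'\not\subseteq M$, not $A'\subseteq A$ versus $A'\not\subseteq A$; the implication $A'\subseteq M\Rightarrow A'\subseteq A$ is itself part of Case~1 and needs the argument with Lemma~\ref{l:closer} that you sketched only for the other case.
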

\begin{proof}

We first consider the case $A'\subset M$. Since $M\in \K_\mu$ there is some copy
$B''$ of $B'$ over $A'$ with $A'B''\leq_\L N$ which intersects $B$. If $B''\not\subseteq B$, then 
 $A'\leq A'\cup (M \cap B'' ) \leq
A' \cup B''$ contradicting the minimality of $B''$ over $A'$. So $B''\subseteq B$. If $B''$ were a proper subset of $B$, minimality
of $(A, B)$ would imply that $0 < \delta(B''/A) \leq  \delta(B''/A')$ by Lemma~\ref{l:submodular},  contradicting simplicity of $B$ over $A$; hence $B''=B$.  Since
$(A', B)$ is simple, every element of $A'$ has an edge to some element of $B$. If $A'\not\subseteq A$, then
some  edge is induced by a line $z$ in $A$. However, that  would imply $\delta(B/AA')\leq \delta(B/A'z)<\delta(B/A')=0$ by Lemma~\ref{l:strongamalgam}, Corollary~\ref{c:submodular} and Lemma~\ref{l:closer}, contradicting
the assumption that $B$ is a strong extension of $A$.
Thus $A'$ must be a subset of $A$ and Case~1. holds.

Next consider the case $A'\not\subseteq M$, so $A' \cap B \neq\emptyset$. Let $x\in A'\cap B$
and suppose there
are $k> 2$ disjoint copies $B'_1,\ldots, B'_k$ of $B'$ over $A'$ contained in $M$ with $A'B_i'\leq_\L N$.

 Since $B'$ is simple over $A'$ and $x\in A'$, there are $y_i$ in each copy $B'_i, i=1,\ldots k,$ incident to~$x$. Since $B$ is strong over $A$ and $x\in B$, we have $\delta(x/A)\geq 0$. Since $k>2$, this can only happen if all edges from $x$ to the $y_i$ are induced by the same line $z\in A$.

Since $A'zB'_i\leq_\L N$ we have
$\delta(B'_j/\bigcup_{i=1,\ldots j-1}B'_iA'z)\leq \delta(B_j'/A'z)\leq -(n-2)$ by Corollary~\ref{c:submodular} and Lemma~\ref{l:closer}. By Lemma~\ref{l:closer} we also have $\delta(A'z)=\delta(A')+(n-1)$. Putting these pieces together we obtain inductively
 \[0\leq\delta(\bigcup _{i=1,\ldots k}B'_iA'z) \leq\delta (A')+(n-1)-k(n-2).\]
Hence \[k\leq\frac{\delta(A')+(n-1)}{n-2}<  \frac{2\delta(A'))}{n-2} <\frac{\delta(A')}{2}\]
since $\delta(A')\geq 2(n-1)$ and $n-2\geq 4$. So there are fewer than $\delta(A')/2$ copies of $B'$ over $A'$ contained in $M$.  

Let now  $B'_{k+1},\ldots, B'_{k+l}$ be disjoint copies of $B'$ over $A'$ intersecting both $M$ and~$B$
and $A'B'_i\leq_\L N$. 
Since the $B_i'$ are $0$-minimal over $A'$ and $B_i'A'\leq_\L N$, 
we have for each $i = k+1,\ldots k+l$:
\[\delta(B'_i/MA')\leq\delta(B'_i/(M\cap B'_i)A')<0.\]

Note that $\delta(M \cap A') \leq \delta(A' )$
since $M \cap A'$ is strong in $A'$. Inductively this yields
\[0\leq\delta(\bigcup_{i=k+1}^{k+l}B_i'\cup A'/M)\leq\delta(A'/M)-l\leq \delta(A')-l.\]

Thus, $l\leq \delta(A')$ and so there are less than $\frac{3}{2}\cdot\delta(A')$ many disjoint copies of $B'$ over $A'$ which are not contained in $B$. 
Since $\mu(A',B')\geq 2\delta(A')$ this
leaves more than $\delta(A')/2$ copies of $B'$ over $A'$ inside $B$. 

We claim that $A'\subseteq AB$. Otherwise we argue as above: any element of $A'\setminus AB$ has an induced edge
to some element of each copy of $B'$. So for any copy $B''$  of $B'$ over $A'$ inside $B$ there is a line $z$ in $A$ inducing this edge. By Lemma~\ref{l:closer} we have $\delta(B''/MA')\leq \delta(B''/A'z)\leq -(n-2)$.
Since $0\leq\delta(\bigcup B''\cup A'/M)$ and $\delta(A'/M)\leq\delta(A')$,
we see as before that there are at most $\delta(A'/M)/(n-2)<\delta(A')/2$ copies of $B'$ over $A'$ inside $B$, contradicting the assumption that $(A',B')$ witnesses $N\not\in\K_\mu$.
Thus $A'$ is contained in
$A B$, finishing the proof.
\end{proof}

\begin{corollary}\label{c:isimpleamalgam}
Suppose we have $A,B, C$ such that \begin{enumerate}
\item $A\leq AB$ is $i$-minimal for some $i\geq 1$,
\item $A\leq_{|B|} C$; and
\item $A, AB, AC\in \Kmu$.
\end{enumerate}Then $D=B\otimes_AC\in \Kmu$.
\end{corollary}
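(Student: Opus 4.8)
The plan is to verify in turn the two conditions defining $\Kmu$: first that $D\in\K$, and then that $D$ respects all the $\mu$-bounds on simple pairs.

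For $D\in\K$ I would apply Lemma~\ref{l:amalgaminK} with $C_0=A$, $C_1=C$ and $C_2=AB$. All three lie in $\K$ since $\Kmu\subseteq\K$, the extension $AB$ is $i$-minimal over $A$ by hypothesis~(1), and the hypothesis $A\leq_{|B|}C$ should suffice to run the case analysis of that lemma: the potential failures of Conditions 1--3 of Definition~\ref{d:K} in a free amalgam reduce to configurations whose $C$-part has at most $|B|$ new vertices, and a potential short cycle in a residue splits into two paths, one of which lies in $AB$ and hence --- since $A\leq AB$ --- already has length at least $n-1$ by Remark~\ref{r:pathextension}, so that a length/parity count (the cycle being even of length $<2n$) leaves the other path short enough for $A\leq_{|B|}C$ to control. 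Since $i\geq 1$, the exceptional conclusion ``$i=0$'' of Lemma~\ref{l:amalgaminK} cannot occur, so $D\in\K$.

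Now suppose toward a contradiction that $D\notin\Kmu$; as $D\in\K$, this is witnessed by a simple pair $(A',B')$ with $A'\leq_\L D$ and $\chi^D(A',B')>\mu(A',B')$. I would apply Lemma~\ref{l:main} with $M=C$ --- note $C=AC\in\Kmu$ and $A\subseteq C$, the extension $A\leq AB$ is minimal, and $N:=M\otimes_A AB$ is precisely $D$ --- so one of its two cases must hold. In Case~1, $B$ is $0$-minimal over $A$, i.e.\ $\delta(B/A)=0$, contradicting $\delta(B/A)=i\geq 1$. Hence Case~2 holds: $A'\subseteq AB$, $A'\not\subseteq A$, and $B$ contains an isomorphic copy of $B'$ over $A'$. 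Since $A'\not\subseteq A$ while $A'\subseteq A\cup B$, we get $A'\cap B\neq\emptyset$, and as $B$ contains a copy of $B'$ disjoint from $A'$ this forces $|B'|<|B|$.

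The last step is to feed $(A',B')$ into Lemma~\ref{l:locationofsimplepair}: all its hypotheses now hold ($A,AB,C\in\Kmu$; $AB$ is $i$-minimal over $A$ with $i\geq 1$; $A\leq_{|B|}C$; $A'\leq_\L AB$, from $A'\leq_\L D$; $A'\not\subseteq A$; and $|B'|\leq|B|$). Since each of the more than $\mu(A',B')$ pairwise disjoint witnessing copies $B''$ of $B'$ over $A'$ satisfies $A'B''\leq_\L D$, the lemma gives $A'B''\subseteq AB$ for every such $B''$; and because the lines of $AB$ are exactly lines of $D$, we then also have $A'\leq_\L AB$ and $A'B''\leq_\L AB$. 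Thus $\chi^{AB}(A',B')>\mu(A',B')$, contradicting $AB\in\Kmu$, and therefore $D\in\Kmu$. The step I expect to be the main obstacle is the first one: checking carefully that in the case analysis of Lemma~\ref{l:amalgaminK} the hypothesis $A\leq_{|B|}C$ can indeed replace $A\leq_{n-1}C$ once one also exploits $A\leq AB$. Everything after that is a mechanical combination of Lemmas~\ref{l:main} and~\ref{l:locationofsimplepair} with the definition of $\Kmu$.
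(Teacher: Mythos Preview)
Your proof is the paper's: Lemma~\ref{l:amalgaminK} for $D\in\K$, then Lemma~\ref{l:main} (Case~1 excluded by $i\geq 1$), then Lemma~\ref{l:locationofsimplepair} yields $\chi^{AB}(A',B')>\mu(A',B')$, contradicting $AB\in\Kmu$. Your concern about the hypothesis mismatch $A\leq_{|B|}C$ versus the $A\leq_{n-1}C$ of Lemma~\ref{l:amalgaminK} is well placed --- the paper applies the lemma without comment --- but your sketch for Condition~5 is in fact sound: the $AB$-piece of a short residue-cycle has length $\geq n-1$ (since $A\leq AB$) and hence contributes $\geq n-2$ new vertices to $B$, forcing $|B|\geq n-2$, while the remaining $C$-piece then has at most $n-2$ new vertices and is therefore controlled by $A\leq_{|B|}C$.
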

\begin{proof}
By Lemma~\ref{l:amalgaminK} we have $D\in\K$. So if $D\notin\Kmu$, the second case of Lemma~\ref{l:main}  must apply: there is a simple pair $(A',B')$
with $A'\leq_\L AB, A'\not\subseteq A$ such that $\chi^D(A',B')>\mu(A',B')$. By Lemma~\ref{l:locationofsimplepair} we have $\chi^D(A',B')=\chi^{AB}(A',B')$, contradicting the assumption that $AB\in\Kmu$.
\end{proof}

\begin{definition} We say that $M \in\K_\mu$ is $\K_\mu$-saturated if for all finite $A \leq M$
and strong extensions $C$ of $A$  with $C\in\K_\mu$ there is a strong embedding of
$C$ into $M$ fixing $A$ elementwise. 
\end{definition}

Since the empty graph belongs to $\K_\mu$ and is
strongly embedded in every $A\in\K_\mu$, this implies that a $\K_\mu$-saturated $M$ strongly embeds every finite $A \in \K_\mu$.

\begin{theorem}\label{t:amalgam}
The class $\K^{fin}_\mu$ of finite elements of $K_\mu$ has the joint embedding and the amalgamation property with
respect to strong embeddings. Hence there exists a countable $K_\mu$-saturated structure $M_\mu$, which is unique up to isomorphism and is a geometry of type $\bullet \overset{n}{-} \bullet \overset{n}{-}\bullet$.
\end{theorem}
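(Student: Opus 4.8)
The plan is to establish the amalgamation and joint embedding properties for $(\K^{fin}_\mu,\leq)$, pass to the generic model, and then verify the four axioms of a geometry of type $\bullet\overset{n}{-}\bullet\overset{n}{-}\bullet$. Since $\emptyset\in\K_\mu$ embeds strongly into every member, joint embedding is the instance of amalgamation over the empty graph, so only the latter needs proof. Given $A\leq B$ and $A\leq C$ with $A,B,C\in\K^{fin}_\mu$, I would first decompose $A=B_0\leq B_1\leq\dots\leq B_m=B$ into minimal extensions (possible since there are finitely many intermediate subgraphs), noting that each $B_j$ again lies in $\K^{fin}_\mu$ because $A\leq B$ forces $A\leq_\L B$ and hence conditions 1--3 of Definition~\ref{d:K} and the $\mu$-bound pass to $\leq$-substructures. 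It then suffices to solve the one-step problem: given a minimal extension $A\leq B$ and an arbitrary strong extension $A\leq C$, with $A,B,C\in\K_\mu$, find $D\in\K^{fin}_\mu$ with $C\leq D$ and a strong embedding $B\hookrightarrow D$ fixing $A$ pointwise; iterating this along the chain (each time extending the embedding already built and using transitivity of $\leq$) produces the amalgam.

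For the one-step problem put $i=\delta(B/A)$. If $i\geq 1$, then $A\leq_{|B|}C$ and Corollary~\ref{c:isimpleamalgam} gives $D:=B\otimes_AC\in\K_\mu$, with $C\leq D$, $B\leq D$ by Lemma~\ref{l:strongamalgam}. If $i=0$, I would apply Lemma~\ref{l:amalgaminK} (with $C_0=A$, $C_1=C$, $C_2=B$, using $A\leq_{n-1}C$): either $D:=B\otimes_AC\in\K$ already, or, by its ``moreover'' clause, $C$ strongly contains a copy of $B$ over $A$ and one takes $D=C$. In the first case, if also $D\in\K_\mu$ we are done; otherwise Lemma~\ref{l:main} applied with $M=C$ (so $N=D$) yields a simple pair $(A',B')$ witnessing $D\notin\K_\mu$, falling into its Case~1 ($A'\subseteq A$, $B$ a copy of $B'$ over $A$) or Case~2 ($A'\subseteq A\cup B$, $A'\not\subseteq A$, $B$ containing a copy of $B'$ over $A'$); using $\chi^C(A',B')\leq\mu(A',B')$ and the tight control on the copies of $B'$ over $A'$ that meet $B$ --- which is exactly where $\mu(A',B')\geq 2\delta(A')$ enters --- one again extracts a strong copy of $B$ over $A$ inside $C$, so $D=C$ works. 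I expect this last deduction, reading off from the two alternatives of Lemma~\ref{l:main} that $B$ already sits strongly over $A$ in $C$, to be the main obstacle.

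Once amalgamation and joint embedding are established, $(\K^{fin}_\mu,\leq)$ is a strong amalgamation class with only countably many isomorphism types and $\emptyset$ a strong substructure of every member, so the standard construction --- a union of a $\leq$-chain of finite members exhausting all finite strong extensions of its initial segments --- produces a countable $\K_\mu$-saturated $M_\mu$, unique up to isomorphism by a back-and-forth between two such structures using $\K_\mu$-saturation. Since $\K_\mu$ is an elementary class closed under unions of $\leq$-chains (cf.\ Remark~\ref{r:substructures}), $M_\mu$ is a model of $\K$; in particular it already satisfies condition 3 of a geometry of type $\bullet\overset{n}{-}\bullet\overset{n}{-}\bullet$ (from condition 1 of Definition~\ref{d:K}), the uniqueness clause of condition 4 (from condition 3 of Definition~\ref{d:K}), the ``if'' direction of condition 2 (from condition 4 of Definition~\ref{d:K}), and the bipartiteness and absence of cycles of length $<2n$ in the residues of points and planes required in condition 1 (from condition 5 of Definition~\ref{d:K}).

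The remaining assertions I would obtain from $\K_\mu$-saturation. For the ``only if'' of condition 2, i.e.\ $E^2=E\circ E$ (so residues of lines are complete bipartite): given an $E^2$-edge $(p,e)$, take a finite $A\leq M_\mu$ containing $p,e$; adjoining a line $z$ with edges $(p,z),(z,e)$ and flag $(p,z,e)$ is a strong extension ($\delta(z/A)=1$) lying in $\K_\mu$ --- the new simple-type configuration it creates is $1$-minimal, not $0$-minimal --- so by saturation such a $z$ exists in $M_\mu$ and $(p,e)$ becomes induced. For condition 4: given two points $p,q$ with no common line, take a finite $A\leq M_\mu$ containing them; adjoining a plane $e$ with $E^2$-edges to $p$ and $q$ is a strong extension ($\delta(e/A)=0$) in $\K_\mu$, and $(\{p,q\},\{e\})$ is precisely a simple pair with $\mu$-value $1$ (item 2 in the definition of $\mu$), so saturation provides such an $e$, while $\chi\leq 1$ and condition 3 of Definition~\ref{d:K} force uniqueness --- dually for planes. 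Finally, for condition 1 it remains to see that $\res(x)$ has diameter $n$ for $x\in\P\cup\Pi$: by Remark~\ref{r:pathextension} a geodesic path of length $n-1$ between two vertices of $\res(x)$ inside $\res(x)$ is a strong extension, an instance of a simple pair with $\mu$-value $1$ (item 3 in the definition of $\mu$), so saturation supplies such paths (and their uniqueness), bounding the diameter by $n$. This shows $M_\mu$ is a geometry of type $\bullet\overset{n}{-}\bullet\overset{n}{-}\bullet$.
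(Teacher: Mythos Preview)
Your outline matches the paper's: reduce joint embedding to amalgamation over $\emptyset$, decompose one side into minimal extensions, dispose of the $i\geq 1$ step via Corollary~\ref{c:isimpleamalgam}, and for the $0$-minimal step show that if $D=B\otimes_AC\notin\K_\mu$ then $C$ already strongly contains a copy of $B$ over $A$, via Lemmas~\ref{l:amalgaminK} and~\ref{l:main}. The paper then passes to the generic model and defers the geometry axioms to~\cite{Te1}; your more explicit verification of them is reasonable.

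The genuine gap is in your handling of Case~2 of Lemma~\ref{l:main}. The inequality $\mu(A',B')\geq 2\delta(A')$ is not the tool here: it is already consumed \emph{inside} the proof of Lemma~\ref{l:main} to reach the Case~2 conclusion, and that conclusion (that $A'\subseteq AB$, $A'\not\subseteq A$, and $B$ merely contains a copy of $B'$) does not by itself hand you a copy of $B$ inside $C$. What the paper actually does is \emph{exclude} Case~2, via Lemma~\ref{l:locationofsimplepair}: when $A'\not\subseteq A$, every $\leq_\L$-copy of $B'$ over $A'$ in $D$ is forced into $AB$, so $\chi^D(A',B')=\chi^{AB}(A',B')\leq\mu(A',B')$ since $AB\in\K_\mu$, contradicting the choice of $(A',B')$. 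Hence only Case~1 can occur; there one has $A'\subseteq A$ and $B\setminus A$ is itself a copy of $B'$, the remaining copies all lie in $C$ (by $C\leq D$ and $0$-minimality each sits entirely in $A$ or entirely in $C\setminus A$), and since $B\in\K_\mu$ already accounts for all copies inside $A$ together with $B\setminus A$ itself, at least one copy $B''$ lies in $C\setminus A$, giving $A\cup B''\cong B$ over $A$. So the missing ingredient you flagged as ``the main obstacle'' is precisely Lemma~\ref{l:locationofsimplepair} to kill Case~2, followed by this counting/minimality argument in Case~1.
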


\begin{proof} Since the empty graph is in $\Kmu$ and strong in
  $A\in\Kmu$, it suffices to prove the amalgamation property. Let
  $C_0,C_1,C_2\in\K^{fin}_\mu$ with $C_0\leq C_1,C_2$. We have to find some
  $D\in\Kmu$ which contains $C_1$ and $C_2$ as strong subgraphs. 
  Clearly we may assume that $C_2$ is a minimal extension of $C_0$.
  Otherwise we decompose the extension $C_2$ over $C_0$ in a series
  of minimal extensions and obtain the required amalgam by
  amalgamating these minimal extensions step by step.
  If $C_2$ is $i$-minimal for $i\geq 1$, then by Corollary~\ref{c:isimpleamalgam}
  we know that  $D=C_1\otimes_{C_0}C_2\in\Kmu$ is the required amalgam.
  
  So suppose that $C_2$ is a $0$-minimal extension of
  $C_0$.  We will show that if $D=C_1\otimes_{C_0}C_2\not\in\Kmu$, 
  then
  $C_1$ strongly contains a copy $C'_2$ of $C_2$ over $C_0$. Since $\delta(C'_2/C_0)=0$
  this then implies that $C'_2$ is strong in $C_1$. 
  By Lemma~\ref{l:amalgaminK} we may assume that $D\in\K$.
  Let $A$ be the set of elements in $C_0$ which
  are connected to a vertex in $B=C_2\setminus C_0$ by a non-induced edge. Then $(A,B)$ is a
  simple pair and we have $D=C_1\otimes_AAB$. Since $D\notin\Kmu$ we have $\grad{D}{(A',B')}>\mu(A',B')$ for a
  simple pair $(A',B')$.
  We now apply
  Lemma~\ref{l:main}.  By assumption the second case of the lemma 
  is excluded by Lemma~\ref{l:locationofsimplepair} 
  and the assumption that $C_2\in\Kmu$. So
  the first case applies and we have $A'=A$ and $B'$ is a copy of $B$
  over $A$. All other copies $B''$ of $B$ over $A$ are contained in
  $C_1$ by simplicity. Since $B''$ is minimal over $A$, either $B''$
  must be a subset of $C_0$ or a subset of $C_1\setminus C_0$. Since
  $C_2$ is in $\Kmu$, there is a $B''$ contained in $C_1\setminus
  C_0$. Then $C_0\cup B''$ is isomorphic to
  $C_2=C_0\otimes_AAB$ over $C_0$.

  Thus
  $\K^{fin}_\mu$ has the joint embedding and the amalgamation property with
  respect to strong embeddings and we obtain a countable
  $\Kmu$-saturated structure $M_\mu$, which is unique up to
  isomorphism. In particular, in $M_\mu$ any partial isomorphism
  $f\colon A\to A'$ with $A,A'\leq M_\mu$ extends to an automorphism
  of $M_\mu$. It now
  follows as in \cite{Te1} that $M_\mu$ is a geometry of the
  required type.  
\end{proof}

\section{Model theoretic properties of $\Mmu$}

We now turn to the model theoretic properties of $\Mmu$ and
axiomatize its theory. For this we note the following:

\begin{lemma}\label{l:closure}
Let $A,B, M\in\K_\mu$.  If $A,B\leq M$, then $A\cap B\leq M$. 
\end{lemma}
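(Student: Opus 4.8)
The claim is that if $A, B \leq M$ with $A, B, M \in \K_\mu$, then $A \cap B \leq M$. The plan is to argue by a "closing off" computation using the non-submodularity-corrected inequality from Lemma~\ref{l:submodular} (Corollary~\ref{c:submodular}), since ordinary submodularity — which would give this immediately — fails here. First I would reduce to showing $\delta(C / A\cap B) \geq 0$ for an arbitrary finite $C \subseteq M$, and by replacing $C$ with $C \cup (C \cap A) \cup (C\cap B)$ if necessary, I may assume we are comparing $\delta$-values along the lattice generated by $A$, $B$ and $C$. The natural move is to write $A \cap B \leq A$ (true since $A \leq M$) and then try to transfer strength of $B$ over $A \cap B$ across the amalgamation-type square formed by $A$, $B$, $A\cup B$.

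The key steps, in order: (1) Since $A \leq M$, for any finite $C \subseteq M$ we have $\delta(C / A) \geq 0$; in particular $\delta((C\cap B)\cup\text{stuff} / A) \geq 0$. (2) The obstruction to concluding $\delta(C / A\cap B) \geq 0$ directly is exactly the failure of submodularity, so I would invoke Corollary~\ref{c:submodular}: to compare $\delta(C / A \cap B)$ with $\delta(C / A)$ I need the hypothesis $A' C \leq_\L A C$ in the notation there, i.e. I must check the relevant $\L$-strongness condition holds. This is where the hypothesis $A, B \leq M$ (not merely $\leq_\L$) is used: full strongness of $A$ and $B$ in $M$ forces any line witnessing a potential $\L$-defect to already lie in $A \cap B$. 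Concretely, if $x_1, x_2 \in A\cap B$ and $\ell \in M$ is incident with both, then $\ell \in A$ (as $A \leq M \Rightarrow A \leq_\L M$) and $\ell \in B$ likewise, so $\ell \in A\cap B$; thus $A\cap B \leq_\L M$, and similarly the needed $\L$-strongness of the intermediate configurations holds. (3) With the $\L$-strongness in hand, apply Corollary~\ref{c:submodular} with the triple $(A\cap B, A, C)$ — or more precisely set up $B \cap C$ disjoint from $A$ and strong over $A$ via an induction peeling off one vertex of $(C\cap A)\setminus(A\cap B)$ at a time — to get $\delta(C / A\cap B) \geq \delta(C / A) \geq 0$. (4) Conclude $A\cap B \leq M$.

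The main obstacle I expect is step (3): correctly arranging the induction so that Corollary~\ref{c:submodular}'s three hypotheses ($AC \leq_\L BC$, $B \cap C = \emptyset$, $C$ strong over $B$) are literally met at each stage. The roles of the letters must be matched carefully — one wants to play $A \cap B$ against $A$ with the "extra" part $D := (A\cap B') \setminus (A\cap B)$ for a suitable finite $B'$, establishing that $D$ is strong over $A\cap B$ (using $B \leq M$) and disjoint from the relevant copy of $C$, so that $\delta(C / A\cap B) \geq \delta(C / (A\cap B)\cup D)$. A clean way to sidestep bookkeeping is: given finite $C \le M$ with $C \supseteq A\cap B$ minimal counterexample, note $C \cap A \le M$ would follow by a smaller instance, and $\delta((C\cap A)/(A\cap B)) \ge 0$ by applying Corollary~\ref{c:submodular} to $(A\cap B, B, C\cap A)$ since $C \cap A \subseteq A$, $B$ is strong over $A \cap B$, $(C\cap A)\cap B \subseteq A\cap B$, and the $\L$-strongness is as in step (2); then $\delta(C/(A\cap B)) = \delta((C\cap A)/(A\cap B)) + \delta(C / C\cap A) \ge 0 + 0$ using $A \le M$. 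That additive splitting is exactly what makes the argument go through once the $\L$-strongness lemma is applied in the right place.
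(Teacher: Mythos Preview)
Your approach—verify $A\cap B\leq_\L M$ and then invoke Corollary~\ref{c:submodular}—is exactly the paper's. The paper does this in two sentences: once $A\cap B\leq_\L M$ is noted, it writes $\delta(C/A\cap B)\geq\delta(C/A)\geq 0$ for arbitrary finite $C\subset M$ and stops. Your step~(2) is literally the paper's first sentence.

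Where you diverge is the ``clean way'' at the end, and that paragraph contains errors. The assertion that ``$C\cap A\leq M$ would follow by a smaller instance'' is meaningless: there is no induction here that produces strongness of an arbitrary finite subset. The splitting
\[
\delta(C/(A\cap B))=\delta((C\cap A)/(A\cap B))+\delta(C/C\cap A)
\]
is wrong; the base of the second summand must be $(A\cap B)\cup(C\cap A)$, not $C\cap A$. And deducing that second summand is $\geq 0$ ``using $A\leq M$'' again requires Corollary~\ref{c:submodular}, this time with the triple $((A\cap B)\cup(C\cap A),\,A,\,C\setminus A)$, whose hypothesis~1 you neither state nor check. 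The one application you do carry out correctly—Corollary~\ref{c:submodular} applied to $(A\cap B,\,B,\,C\cap A)$, where $\L$-strongness holds because $(A\cap B)\cup(C\cap A)\subseteq A$ and $A\leq_\L M$—is sound and is essentially the whole argument; the rest of your scaffolding adds nothing and should be dropped.
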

\begin{proof}
Clearly, if $A,B\leq M$, then  $A\cap B\leq_\L M$.
Hence for any $C\subset M$ 
we have $\delta(C/A\cap B)\geq\delta(C/A)\geq 0$ by Corollary~\ref{c:submodular}.
\end{proof}

\begin{definition}\label{d:cl}
For $A,M\in K$, $A$ finite, we let 
\[ \cl^M(A)=\bigcap\{B\subset M\colon A\subset B\leq M\}.\]
 \end{definition}
We omit the superscript whenever it is clear from the context.
Note that for a finite set $A$ its closure 
$\cl^M(A)$ remains finite and is strong in $M$.

\begin{proposition}\label{p:strongext}
 For every $i$-minimal extension $A\leq AB$ with $i\geq 1$ and $A\leq_{|B|+n-1} \Mmu$ there exists
a copy $B'$ of $B$ over $A$ such that
$\cl(A)B'$ is  strongly embedded into $\Mmu$ and isomorphic to $ \cl(A)\otimes_AB'$.
In particular,  if $A\leq_k\Mmu$ for $k\geq |B|+n-1$, then $AB'\leq_k\Mmu$.
\end{proposition}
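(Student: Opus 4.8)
The plan is to combine the saturation of $\Mmu$ with the amalgamation results already established, and then to push the closure $\cl(A)$ through the extension using $\L$-strongness. First I would set $A^*=\cl(A)$, which is finite and strong in $\Mmu$ by the remark following Definition~\ref{d:cl}. Since $A\leq_{|B|+n-1}\Mmu$ and $A\subseteq A^*\leq\Mmu$, I claim $A\leq_{|B|}A^*$ holds trivially (as $A^*\leq\Mmu$, every subset of $A^*$ containing $A$ is caught between two strong sets), and in fact $A\leq_{|B|} A^*$; more importantly I want $A^*\leq_{|B|}\Mmu$, which is immediate from $A^*\leq\Mmu$. The key point is to form $D=A^*\otimes_A AB$ and show $D\in\Kmu$: since $A\leq AB$ is $i$-minimal with $i\geq 1$ and $A\leq_{|B|}A^*$ (indeed $A\leq A^*$), Corollary~\ref{c:isimpleamalgam} with the roles $A,B$ as given and $C=A^*$ yields $D=AB\otimes_A A^*\in\Kmu$. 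By Lemma~\ref{l:strongamalgam} (with $C_0=A$, $C_1=A^*$ taken $(n-1)$-strong over $A$ — or just using $A\leq A^*$ — and $C_2=AB$), we get $A^*\leq D$ and $AB\leq_{|B|} D$; in particular $A^*$ is strong in $D$ and $B$ is attached to $A^*$ freely over $A$.

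Next I would use $\Kmu$-saturation of $\Mmu$. Since $A^*\leq\Mmu$ and $A^*\leq D$ with $D\in\Kmu$ finite, saturation gives a strong embedding $D\hookrightarrow\Mmu$ fixing $A^*$ pointwise. Let $B'$ be the image of $B$ under this embedding. Then $\cl(A)B'=A^*B'$ is the image of $D$, hence strongly embedded in $\Mmu$, and by construction it is isomorphic to $\cl(A)\otimes_A B'$ (the copy of $B$ sits freely over $A$ relative to $A^*$, with only the induced $E^2$-edges forced by lines of $A$). This gives the first assertion.

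For the "in particular" clause, suppose $A\leq_k\Mmu$ with $k\geq|B|+n-1$; I must show $AB'\leq_k\Mmu$, i.e. $\delta(X/AB')\geq 0$ for every $X\subseteq\Mmu$ with $|X\setminus AB'|\leq k$. The idea is to absorb $X$ together with the small "gap" $A^*\setminus A$ into a single application of the hypothesis on $A$: since $A^*B'\leq\Mmu$ we have $\delta(X/A^*B')\geq 0$, so it suffices to control $\delta((A^*\setminus A)\cup(\text{stuff})/AB')$, and here is where the bound $k\geq |B|+n-1$ enters — the elements of $A^*\setminus A$ that are "needed" to witness strongness of $AB'$ are exactly those lines and vertices that, together with $B'$, could form a witness of failure; using $A^*B'\cong\cl(A)\otimes_A B'$ and Lemma~\ref{l:closer} to account for induced edges, one reduces $\delta(X\cup(A^*\setminus A)/AB')\geq\delta$ of something attached to $A$ of size $\le|X\setminus AB'|+|B'\setminus A|+\ldots$, which I would bound by $k$ using that $|B|+n-1\leq k$ leaves room for the $B'$-vertices plus the closure-gap. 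Invoking $A\leq_k\Mmu$ then closes the argument.

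The main obstacle I anticipate is precisely this bookkeeping in the last step: correctly identifying why $n-1$ extra "slots" beyond $|B|$ suffice to absorb the difference between $\cl(A)$ and $A$ when testing $k$-strongness of $AB'$, and making sure the free-amalgam structure $\cl(A)\otimes_A B'$ (so that $B'$ adds nothing to $\delta$ over $\cl(A)$ beyond what it adds over $A$, modulo the induced edges through lines of $A$) is used to convert a test set for $AB'$ into a test set for $A$ of the right size. The graph-theoretic parts (conditions 1–5 of Definition~\ref{d:K} in $D$) are already handled by Lemma~\ref{l:amalgaminK}, so no new case analysis is needed there.
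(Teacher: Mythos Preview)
Your argument for the main assertion is essentially the paper's: form $D=\cl(A)\otimes_A AB$, apply Corollary~\ref{c:isimpleamalgam} to get $D\in\Kmu$, use Lemma~\ref{l:strongamalgam} for $\cl(A)\leq D$, and then invoke $\Kmu$-saturation of $\Mmu$ to strongly embed $D$ over $\cl(A)$. That part is fine.

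The gap is in the ``in particular'' clause. Your plan is to reduce a $k$-test set $X$ for $AB'$ to a $k$-test set for $A$ by ``absorbing the gap $\cl(A)\setminus A$'' using the extra $n-1$ slots. This cannot work: there is no bound of the form $|\cl(A)\setminus A|\leq n-1$; the closure of $A$ can be much larger than $A$, with size controlled only by $\delta(A)$-type quantities, not by $n$. The hypothesis $k\geq |B|+n-1$ is \emph{not} there to leave room for $\cl(A)\setminus A$; the $n-1$ enters only to make Lemma~\ref{l:amalgaminK} applicable (it needs $A\leq_{n-1}\cl(A)$) so that $D\in\K$. Likewise the idea of passing from $\delta(X/AB')$ to $\delta(XB'/A)$ fails on cardinality grounds, since $|XB'\setminus A|$ can be as large as $k+|B|>k$. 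Lemma~\ref{l:closer} is irrelevant here.

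The paper's route is much shorter and bypasses all of this. From $A\leq_k\Mmu$ one has $A\leq_k\cl(A)$, so Lemma~\ref{l:strongamalgam} (with $C_0=A$, $C_1=\cl(A)$, $C_2=AB$) gives directly $AB'\leq_k D$. Since the embedding of $D$ is strong, $D\leq\Mmu$, and in fact $D=\cl(A)B'=\cl(AB')$; moreover $AB'\leq_\L D$. The conclusion $AB'\leq_k\Mmu$ then follows from the chain $AB'\leq_k D\leq\Mmu$ together with $AB'\leq_\L D$: no size bookkeeping against $\cl(A)\setminus A$ is needed at all, and the hypothesis $A\leq_k\Mmu$ is used only to obtain $AB'\leq_k D$, not to test sets in $\Mmu$ directly.
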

\begin{proof}
Let $(A,B)$ be an $i$-minimal pair with $i\geq 1$ and $A\leq_{|B|+n-1} \Mmu$.
By Corollary~\ref{c:isimpleamalgam}  we have $D=\cl(A)\otimes_AB\in \Kmu$.
By Lemma~\ref{l:strongamalgam} we know that  $\cl(A)\leq D$ and $AB\leq_k D$.
Hence by $\Kmu$-saturation we can 
strongly embed $D=\cl(A)B$ over $\cl(A)$ into $\Mmu$.
If $A\leq_k\Mmu$ for $k\geq |B|+n-1$,
then from  $AB\leq_kD\leq\Mmu$ and  $AB\leq_\L\cl(A)B=\cl(AB)$, we conclude that $AB\leq_k\Mmu$

\end{proof}
\begin{theorem}\label{t:modelMmu}
Let $T_\mu$ be the theory (in the relational language of $3$-coloured graphs introduced above)
axiomatising the class of models $M$ such that:
\begin{enumerate}

\item $M\in\K_\mu$;
\item For any $i\geq 1$ and any $i$-minimal extension $A\leq AB$  
with  $A\leq_{|B|+n-1} M$, there is a copy $B'$ of $B$ over $A$ in $M$
such that if $A\leq_k M$ for some $k\geq |B|+n-1$, then $AB'\leq_k M$;
\item $M \otimes_A AB \notin \K_\mu$ for each simple pair $(A, B)$ with $A\leq_\L M$.
\end{enumerate}
Then $T_\mu = Th(M)$.
\end{theorem}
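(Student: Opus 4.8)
The plan is to show that $T_\mu$, as axiomatized by conditions (1)--(3), is complete and is satisfied by $\Mmu$, so that it coincides with $Th(\Mmu)$. First I would verify that $\Mmu \models T_\mu$: condition (1) holds by construction; condition (2) is exactly Proposition~\ref{p:strongext}; and condition (3) holds because $\Mmu$ is $\Kmu$-saturated and realizes $\chi^{\Mmu}(A,B)$ up to its maximum, i.e.\ for every simple pair $(A,B)$ with $A\leq_\L \Mmu$ one has $\chi^{\Mmu}(A,B)=\mu(A,B)$, which forces $\Mmu\otimes_A AB\notin\Kmu$ since adding one more copy of $B$ over $A$ would violate the $\mu$-bound. (If $\mu(A,B)$ fails to be attained, one uses $\Kmu$-saturation together with the amalgamation machinery to embed a strong extension realizing one more copy.) So it remains to prove that any two models of $T_\mu$ are elementarily equivalent, which I would do by a back-and-forth argument on strong subsets.

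The core is a standard Fra\"iss\'e--Hrushovski back-and-forth. Let $M, M'\models T_\mu$. I would show the family of partial isomorphisms $f\colon A\to A'$ with $A\leq_k M$ and $A'\leq_k M'$ for $k$ large relative to the extensions under consideration (or, more cleanly, the family of finite partial isomorphisms between \emph{closed} strong subsets, using the $\cl$ operator from Definition~\ref{d:cl}) has the back-and-forth property. Given such an $f$ and a new element $c\in M$, I would first replace $A$ by $\cl^M(Ac)$; the extension $A\leq \cl^M(Ac)$ decomposes into finitely many minimal extensions, so it suffices to handle a single minimal extension $A\leq AB$. If this extension is $i$-minimal with $i\geq 1$, condition (2) of $T_\mu$ provides a copy $B'$ of $B$ over $A'$ in $M'$ with the required $\leq_k$-strength, and one transports $f$ accordingly. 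If the extension is $0$-minimal, hence $\delta(B/A)=0$, then by Lemma~\ref{l:alg=0} there are only finitely many copies of $B$ over $A$ in $M$ (and in $M'$), and condition (3), which says $M'\otimes_{A'}A'B' \notin\Kmu$, guarantees that $M'$ already realizes the maximal number $\mu(A',B')=\mu(A,B)$ of copies of $B$ over $A'$; combined with $\chi^M(A,B)\leq\mu(A,B)$ one matches up the copies in $M$ and $M'$ and extends $f$. Iterating and using symmetry gives the back-and-forth system, hence $M\equiv M'$, hence $T_\mu$ is complete and equals $Th(\Mmu)$.

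The main obstacle, and the step requiring the most care, is the bookkeeping on strength indices in the $0$-minimal case together with the interaction of the closure operator and the back-and-forth. One must ensure that after extending $f$ the new domain is again strong (or $\leq_k$-strong for a uniform-enough $k$) in $M$; here Lemma~\ref{l:closure} (so that closures of strong sets behave well), Lemma~\ref{l:strongamalgam}, and Corollary~\ref{c:submodular} are the essential tools. A subtle point is that $\chi^M(A,B)$ and $\chi^{M'}(A',B')$ are a priori only bounded above by $\mu$; condition (3) is precisely what upgrades this to equality in a model of $T_\mu$, so that the counts agree on both sides. One also has to check that passing through $0$-minimal extensions which are of the special types in clauses (2),(3) of the definition of $\mu$ (where $\mu=1$) causes no problem: there $\chi\le 1$ and condition (3) forces $\chi=1$, so again the counts match. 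Once these are in place the argument closes formally; the remainder is routine and parallels the standard treatment (cf.\ \cite{TZ}, \cite{Te1}).
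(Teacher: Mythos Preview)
Your outline is close to the paper's but misses a key step: the paper does \emph{not} run a back-and-forth directly between two arbitrary models of $T_\mu$. Instead it first passes to an $\omega$-saturated $M'\equiv M$ and proves the claim that a model of $T_\mu$ is $\omega$-saturated if and only if it is $\K_\mu$-saturated; once $M'$ is $\K_\mu$-saturated it is partially isomorphic to $\Mmu$, and hence $M\equiv M'\equiv \Mmu$. The reason the detour through $\omega$-saturation is needed is exactly the ``bookkeeping on strength indices'' that you flag but do not resolve. Axiom (2), read as a first-order scheme, for each fixed $k\geq |B|+n-1$ produces \emph{some} copy $B'$ with $AB'\leq_k M'$, but different $k$ may yield different $B'$. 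In an $\omega$-saturated model compactness furnishes a single $B'$ with $AB'\leq_k M'$ for all such $k$, hence $AB'\leq M'$, and the induction over a chain of minimal extensions can continue with a genuinely strong base. Without saturation you cannot in general guarantee a fully strong copy after an $i$-minimal step (for $i\geq 1$), so your back-and-forth on strong (or closed) finite subsets does not close up; the sizes of the minimal pieces in a decomposition of $\cl(Ac)$ over $A$ are not bounded in advance, so a pre-chosen ``$k$ large relative to the extensions'' cannot be fixed uniformly.

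A second, smaller point: your treatment of the $0$-minimal case is not right. Axiom (3) does \emph{not} force $\chi^{M'}(A',B')=\mu(A',B')$; there is no reason every model must realize the maximal number of copies. What Axiom (3) actually gives, via Lemma~\ref{l:main} and the argument in the proof of Theorem~\ref{t:amalgam}, is that for any $A'\leq M'$ and any $0$-minimal extension $A'\leq A'B'$ there exists a copy $B''$ of $B'$ inside $M'\setminus A'$; since $\delta(B''/A')=0$ and $A'\leq M'$, this copy is automatically strong. That single-copy statement is exactly what is needed for one inductive step of the back-and-forth --- counting all copies on both sides and ``matching them up'' is neither necessary nor justified.
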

\begin{proof}

Note first that this forms an elementary class containing $M_\mu$: the class $\K$ is elementary and for each simple pair $(A, B)$ we can express that $\chi^M (A, B) \leq \mu(A, B)$, so 1. is first-order expressible and holds in $\Mmu$ by construction.
Property 2. is a first-order property, which holds in $M_\mu$ 
  by  Proposition~\ref{p:strongext}.
   For 3. notice that if $D  = M \otimes_A AB \notin \K_\mu$
then by Lemma~\ref{l:main}  to express the
existence of a simple pair $(A', B')$ with $\chi^D (A',B') > \mu(A', B')$ one can restrict to pairs which are contained in $A \cup B$. So this can be expressed in a first order way. To see that 3. holds for $\Mmu$, assume $D= M_\mu \otimes_A AB \in\Kmu$ for some simple pair $(A,B)$ with $A\leq_\L \Mmu$. Then for every finite $C\leq M_\mu$
  which contains $A$, the graph $C\leq C\otimes_AAB$ belongs to $\Kmu$
  and so $M_\mu$ contains a copy of $B$ over $C$. So we can construct
  in $M_\mu$ an infinite sequence of disjoint copies of $B$ over
  $A$, which is impossible. 
 
 This shows that $T_\mu\subseteq Th(M_\mu)$.
  For the reverse inclusion let $M$ be a model of $\Tmu$. We have to show 
  that $M$ is
  elementarily equivalent to $M_\mu$. Choose an $\omega$-saturated
  $M'\equiv M$. By (one direction of) the next claim $M'$ is
  $\Kmu$-saturated. Then $M'$ and
  $M_\mu$ are partially isomorphic and therefore elementarily
  equivalent.

  \noindent{\bf Claim:} $M$ is an $\omega$-saturated model of $\Tmu$
  if and only if it is $\Kmu$-saturated.

  \noindent Proof:
  Let $M\models\Tmu$ be $\omega$-saturated. To show that $M$ is
  $\Kmu$-saturated, let $A\leq M$ and $A\leq B\in\Kmu$, $B$ finite. 
  We may again assume inductively that $B$ is a minimal extension of $A$.
  If $B$ is $0$-minimal over $A$, then by Axiom 3,  $M\otimes_AB$ does not belong to $\Kmu$.
  By the proof of Theorem~\ref{t:amalgam} $M$
  contains a copy $B'$ of $B$ over $A$. Clearly, $B'$ is again strong in $M$.

  If $B$ is $i$-minimal over $A$ for some $i\geq 1$, 
  then by Axiom 2, there is a copy of $B$ 
  over $A$ strongly contained in $M$.

  Conversely, suppose $M$ is $\Kmu$-saturated. Since $M$ is partially
  isomorphic to $M_\mu$, it is a model of $\Tmu$. Choose an
  $\omega$-saturated $M'\equiv M$. Then by the above $M'$ is
  $\Kmu$-saturated. So $M'$ and $M$ are partially isomorphic, which
  implies that $M$ is $\omega$-saturated.
\end{proof} 

For further reference we state the following corollary of the proof:
\begin{corollary}\label{c:omegasaturated}
The model $M_\mu$ of the theory $T_\mu$ is $\omega$-saturated.
\end{corollary}

For finite sets $A$ we now define
\[d(A)=\min\{\delta(B)\mid A\subseteq B\subseteq M\}=\delta(cl(A)).\]
Similarly, we put $d(A/B)=d(AB)-d(B)$ if $B$ is finite
and more generally we put $d(A/B)=\min \{d(A/C)\colon C\subseteq B\mbox{ finite}\}$.

We will show that for any flag $(a,b)\in M_\mu$ the set 
\[D_{(a,b)}=\{ x\in M_\mu\colon \{a,b,x\} \mbox{ is a flag}\}\] is strongly minimal. To this end we collect some  standard lemmas whose proofs can be found in \cite{TZ}:

\begin{lemma}\label{l:Tmu_qe}
  Let $M$ and $M'$ be models of $\Tmu$. Then tuples $a\in M$ and
  $a'\in M'$ have the same type if and only if the map $a\mapsto a'$
  extends to an isomorphism of $cl(a)$ to $cl(a')$. In particular,
  $d(a)$ depends only on the type of~$a$.\qed
\end{lemma}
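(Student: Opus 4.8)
\textbf{Proof plan for Lemma~\ref{l:Tmu_qe} (quantifier elimination down to closures).}

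The plan is to establish a back-and-forth system between $M$ and $M'$ indexed by partial isomorphisms of closures, which by standard model theory yields the type equality and hence that $d(a)$ is type-invariant. First I would recall the setup: both $M,M'\models\Tmu$, so both embed strongly all finite members of $\Kmu$ and, after passing to $\omega$-saturated elementary extensions, are $\Kmu$-saturated by the Claim inside the proof of Theorem~\ref{t:modelMmu}; since types are preserved under elementary extension, it suffices to prove the statement for $\Kmu$-saturated $M,M'$. The one direction is immediate: if $a\equiv a'$ then any $\varnothing$-definable relation holding on $a$ holds on $a'$, and since $\cl(a)$ is finite and its isomorphism type over $a$ is controlled by the quantifier-free data on a bounded neighbourhood (the closure has bounded size because minimal extensions of bounded predimension are bounded by Lemma~\ref{l:alg=0} and the definition of $\mu$), the map $a\mapsto a'$ extends to an isomorphism $\cl(a)\cong\cl(a')$.

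For the substantive direction, assume the map $a\mapsto a'$ extends to an isomorphism $f_0\colon\cl^M(a)\to\cl^{M'}(a')$. I would show the family
\[
\mathcal{I}=\{\,f\colon C\to C' \text{ isomorphism} \mid C\leq M,\ C'\leq M',\ f\supseteq f_0\,\}
\]
has the back-and-forth property. Given $f\colon C\to C'$ in $\mathcal{I}$ and a point $m\in M$, I want to extend $f$ to a strong set containing $m$. Form $B=\cl^M(Cm)$; then $C\leq B$ and $B$ is a finite strong extension obtained by a finite chain of minimal extensions $C=B_0\leq B_1\leq\cdots\leq B_r=B$. Transporting step by step: at each stage $f$ is defined on $B_j$, $f(B_j)\leq M'$, and $B_j\leq B_{j+1}$ is $i_j$-minimal; if $i_j=0$ then Axiom~3 of $\Tmu$ together with the argument in the proof of Theorem~\ref{t:amalgam} produces a copy of $B_{j+1}$ over $f(B_j)$ inside $M'$ which is again strong; if $i_j\geq 1$ then $\Kmu$-saturation (equivalently Axiom~2, using that $f(B_j)\leq M'$ so $f(B_j)\leq_k M'$ for all $k$) gives a strong copy. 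This extends $f$ along the chain to an isomorphism defined on $B\ni m$ with strong image, so the new map lies in $\mathcal{I}$. By symmetry the same works going into $M$, establishing back-and-forth; hence $M$ and $M'$ realize the same types over corresponding closures, and in particular $a\equiv a'$. The final sentence follows since $d(a)=\delta(\cl(a))$ depends only on the isomorphism type of $\cl(a)$, which by the equivalence just proved depends only on $\tp(a)$.

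The main obstacle I anticipate is the bookkeeping ensuring that strongness is preserved at every transport step: one must check that $f(B_j)$ remains strong in $M'$ (not merely $\leq_\L$), so that the hypotheses ``$A\leq_{|B|+n-1}M$'' in Axiom~2 / Proposition~\ref{p:strongext} are actually met with $A=f(B_j)$, and that the copies produced by Axiom~3 are strong because $\delta(\cdot/\cdot)=0$ forces it (as in Theorem~\ref{t:amalgam}). A secondary point is the reduction to $\Kmu$-saturated models: one should be slightly careful that passing to $\omega$-saturated extensions $M^*\succ M$, $M'^*\succ M'$ does not change $\cl(a)$ — this holds because $\cl$ is determined by a bounded existential/universal configuration and $M\preceq M^*$, so $\cl^{M^*}(a)=\cl^M(a)$. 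Once these two checks are in place the argument is routine, following \cite{TZ} Chapter~10.
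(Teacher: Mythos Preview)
Your proposal is correct and follows exactly the standard back-and-forth argument for Hrushovski constructions; the paper itself gives no proof of this lemma, simply marking it with \qed and pointing to \cite{TZ} (``we collect some standard lemmas whose proofs can be found in \cite{TZ}''), so there is nothing to compare beyond noting that your outline is precisely what one finds there. One small remark: your treatment of the ``immediate'' direction ($a\equiv a'\Rightarrow\cl(a)\cong\cl(a')$) is a bit loose---the cleanest justification is not via a uniform size bound but via the observation that, for each fixed finite isomorphism type $C$, the statement ``$\cl(a)\cong C$ over $a$'' is expressible by the (infinite) conjunction of formulas asserting the existence of a copy of $C$ over $a$ together with $C\leq_k M$ for every $k$; since $\tp(a)=\tp(a')$ this information transfers. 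Alternatively, once your back-and-forth is in place the forward direction falls out for free in the $\omega$-saturated setting, since conjugacy by a partial elementary map preserves closures.
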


\begin{lemma}\label{l:Tmu_alg}
  Let $M$ be a model of $\Tmu$ and $A$ a finite subset of $M$. Then
  $a$ is algebraic over $A$ if and only if $d(a/A)=0$.
\end{lemma}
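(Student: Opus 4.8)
The statement to prove is Lemma~\ref{l:Tmu_alg}: $a$ is algebraic over $A$ iff $d(a/A)=0$. Here is my plan.

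\medskip

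The plan is to prove both directions using the standard dimension-theoretic machinery that has been set up, together with the amalgamation/saturation results. First I would establish the easy direction: if $d(a/A)\neq 0$, then since $d(a/A)=d(aA)-d(A)=\delta(\cl(aA))-\delta(\cl(A))\geq 1$ (it is always a non-negative integer because $\cl(A)\leq\cl(Aa)$ and $\delta$-values of strong extensions are non-negative by Remark~\ref{l:delta pos}), I want to produce infinitely many conjugates of $a$ over $A$. The idea is to work over $\cl(A)$: by Lemma~\ref{l:Tmu_qe} the type of $a$ over $A$ (indeed over $\cl(A)$) is determined by $\cl(Aa)$, so it suffices to find infinitely many disjoint (or at least distinct) copies of the strong extension $\cl(A)\leq\cl(Aa)$ inside $\Mmu$ fixing $\cl(A)$. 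Since $\delta(\cl(Aa)/\cl(A))\geq 1$, this is a minimal-or-decomposable strong extension with positive predimension; decompose it into minimal extensions, at least one of which is $i$-minimal with $i\geq 1$. For an $i$-minimal extension with $i\geq 1$, one uses $\Kmu$-saturation together with Corollary~\ref{c:isimpleamalgam} (free amalgamation of such extensions stays in $\Kmu$): repeatedly amalgamating fresh copies of the first minimal piece over $\cl(A)$ shows $\Mmu$ contains infinitely many disjoint copies, all realizing the same type over $A$. Hence $a$ is non-algebraic over $A$.

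\medskip

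For the converse, suppose $d(a/A)=0$; I want to show $a\in\acl(A)$. Again pass to $\cl(A)$, which is finite and strong in $\Mmu$. Then $\delta(\cl(A)a/\cl(A))\leq d(a/\cl(A))=d(a/A)=0$, and in fact I can take $B:=\cl(\cl(A)\,a)=\cl(Aa)$, which satisfies $\delta(B/\cl(A))=0$ and $\cl(A)\leq B\leq\Mmu$. Now Lemma~\ref{l:alg=0} says that since $\Mmu\in\Kmu$ and $\cl(A)\leq B$ with $\delta(B/\cl(A))=0$, there are only finitely many copies of $B$ over $\cl(A)$ in $\Mmu$. Since $a\in B$ and every conjugate of $a$ over $\cl(A)$ lies in some conjugate of $B$ over $\cl(A)$ (because, by Lemma~\ref{l:Tmu_qe}, an automorphism of $\Mmu$ fixing $\cl(A)$ and sending $a$ to $a'$ sends $\cl(Aa)$ to $\cl(Aa')$), the orbit of $a$ under $\Aut(\Mmu/\cl(A))$ is finite. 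Finally, $\cl(A)$ is interalgebraic with $A$ — indeed $\cl(A)\subseteq\acl(A)$, which itself follows from the forward direction just proved applied elementwise (each element of $\cl(A)\setminus A$ has $d=0$ over $A$, hence is algebraic over $A$) — so having a finite orbit over $\cl(A)$ gives a finite orbit over $A$, i.e. $a\in\acl(A)$.

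\medskip

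The main obstacle I anticipate is not any single hard computation but rather bookkeeping with closures and the passage between $A$ and $\cl(A)$: one must be careful that $\cl(A)$ is finite and strong (already noted after Definition~\ref{d:cl}), that $d(a/A)=d(a/\cl(A))$, and that algebraicity over $\cl(A)$ transfers to algebraicity over $A$ — which requires knowing $\cl(A)\subseteq\acl(A)$, so the two directions of the lemma are mildly intertwined and should be proved in the right order (forward direction first, then use it to handle $\cl(A)$ in the converse). The other point needing care is the infinite-realizations argument: one should note that an $i$-minimal extension with $i\geq 1$ over the strong set $\cl(A)$ can be freely amalgamated with itself arbitrarily often inside $\Mmu$ by Corollary~\ref{c:isimpleamalgam} and $\Kmu$-saturation, yielding the infinitely many disjoint copies; this is exactly the kind of argument already used in the proof of Theorem~\ref{t:amalgam}, so it should go through smoothly. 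Both directions are, as the text says, standard, and the references to \cite{TZ} confirm that the only real content is assembling the pieces correctly.
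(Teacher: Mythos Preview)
Your overall strategy matches the paper's exactly: for $d(a/A)=0\Rightarrow a\in\acl(A)$ you pass to $\cl(A)$ and invoke Lemma~\ref{l:alg=0}; for the other direction you decompose $\cl(A)\leq\cl(Aa)$ into minimal steps, locate an $i$-minimal one with $i\geq 1$, and use amalgamation/saturation to produce infinitely many conjugates. Two small corrections are in order.

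First, the displayed inequality $\delta(\cl(A)a/\cl(A))\leq d(a/\cl(A))$ points the wrong way: by definition $d$ is a minimum, so $d(a/\cl(A))\leq\delta(\cl(A)a/\cl(A))$. This is harmless, since you then take $B=\cl(Aa)$, for which $\delta(B/\cl(A))=d(a/A)=0$ genuinely holds.

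Second, your justification that $\cl(A)\subseteq\acl(A)$ is circular. You write that this ``follows from the forward direction just proved\ldots\ (each element of $\cl(A)\setminus A$ has $d=0$ over $A$, hence is algebraic over $A$)''. But the implication ``$d=0\Rightarrow$ algebraic'' is precisely the direction you are in the middle of proving; your ``forward direction'' was its contrapositive, ``$d\neq 0\Rightarrow$ non-algebraic'', which does not yield what you need. The paper sidesteps this by simply asserting ``Clearly $\cl(A)$ is algebraic over $A$'' at the outset. A direct argument avoiding circularity: if $c\in\cl(A)$ and $c'\equiv_A c$, then by Lemma~\ref{l:Tmu_qe} the map $c\mapsto c'$ extends to an isomorphism $\cl(Ac)\to\cl(Ac')$ over $A$; since $c\in\cl(A)$ we have $\cl(Ac)=\cl(A)$, so $\cl(Ac')$ is a strong set containing $A$ of the same size as $\cl(A)$, hence $\cl(Ac')\supseteq\cl(A)$ forces $\cl(Ac')=\cl(A)$ and $c'\in\cl(A)$. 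Thus every realization of $\tp(c/A)$ lies in the finite set $\cl(A)$. With this in hand, both directions go through exactly as you (and the paper) outline.
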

\begin{proof}
  Clearly $cl(A)$ is algebraic over $A$. If $d(a/A)=0$, there is an
  extension $B$ of $cl(A)$ with $a\in B$ and $\delta(B/cl(A))=0$. By
  Lemma~\ref{l:alg=0}, $B$ is algebraic over $\cl(A)$.

  For the converse we may assume that $M$ is $\omega$-saturated.  If
  $d(a/A)>0$, we decompose the extension $cl(A)\leq\cl(Aa)$ into a
  series of minimal extensions $cl(A)=F_0\leq\ldots\leq
  F_n=\cl(Aa)$. One extension $F_k\leq F_{k+1}$ must be $i$-minimal
  for some $i> 0$. By the proof of Theorem~\ref{t:amalgam}, $F_{k+1}$ has
  infinitely many conjugates over $F_{k}$. So $\cl(Aa)$ and therefore
  also $a$ are not algebraic over $A$.
\end{proof}

\begin{proposition}\label{P:Da_str_min}
  For any model $M$ of $\Tmu$ and any flag $(a,b)\in M$,
  the set $D_{(a,b)}$ is strongly minimal.
\end{proposition}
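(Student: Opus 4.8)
The plan is to show that the formula $x\in D_{(a,b)}$ is strongly minimal by controlling the dimension $d$ of points in $D_{(a,b)}$ over the parameters $a,b$. First I would fix the complete flag containing $(a,b)$: write $(a,b)=(p,l)$ or whatever the sorts are, and observe that any $x$ with $\{a,b,x\}$ a flag lies in the residue $\res(x')$ of the point or plane among $a,b$. The key computation is that for a \emph{generic} such $x$ we have $d(x/ab)=n-1$: indeed, adding one new vertex $x$ to the strong closure $\cl(ab)$ so that $\{a,b,x\}$ forms a flag and $x$ has no other incidences contributes $\delta(x/\cl(ab)) = (n-1)$ by the residue computation (Lemma~\ref{l:polygon} / Remark~\ref{l:delta pos}(2): a single new vertex in a residue has $\delta_1$-value $n-1$), and by the axioms of $T_\mu$ (via Proposition~\ref{p:strongext} and $\Kmu$-saturation of $\omega$-saturated models) such an $x$ actually exists in $M$ and is moreover $\L$-strong, so $d(x/ab)=n-1>0$. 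This shows $D_{(a,b)}$ is infinite.

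Next I would show that any element of $D_{(a,b)}$ not of this generic type is algebraic over $ab$. By Lemma~\ref{l:Tmu_alg}, it suffices to show that if $x\in D_{(a,b)}$ and $d(x/ab)<n-1$, then in fact $d(x/ab)=0$. Here is where the non-submodularity and the special structure of residues enter. Since $x$ lies in a residue $\res(y)$ with $y\in\{a,b\}\cap(\P\cup\Pi)$, the closure $\cl(abx)$ decomposes into minimal extensions over $\cl(ab)$; by Lemma~\ref{l:strongresidue} the relevant part lives inside the residue, where $\delta$ restricts to $\delta_1$, the predimension of a generalized $n$-gon from \cite{Te1}. One then argues, as in \cite{Te1}, that in the generalized $n$-gon any nonzero-but-small extension forces either the path of length $n-1$ joining $x$ to a previous residue-vertex (which has $\delta_1$-value $0$ by Remark~\ref{r:pathextension}, hence $x$ is algebraic) or a larger value; the only proper minimal extensions with $0<\delta_1<n-1$ are ruled out because a single new vertex already has value $n-1$ and any extension containing a cycle is controlled by condition 6(b) of Definition~\ref{d:K}. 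So the possible values of $d(x/ab)$ are exactly $0$ and $n-1$: $D_{(a,b)}$ splits into an algebraic part and a single generic type, giving strong minimality.

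Finally I would package this uniformly: by Lemma~\ref{l:Tmu_qe} the type of $x$ over $ab$ is determined by the isomorphism type of $\cl(abx)$, and $d(x/ab)$ depends only on $\tp(x/ab)$; so in \emph{any} model $M\models T_\mu$ (not just $\omega$-saturated ones) the set $D_{(a,b)}$ has exactly one nonalgebraic type over $\acl(ab)$, hence is strongly minimal. The main obstacle I expect is the middle step: ruling out intermediate values $0<d(x/ab)<n-1$. This requires a careful case analysis of minimal extensions of a strong set inside a generalized $n$-gon, using Lemmas~\ref{l:delta_1} and~\ref{l:line} to confine everything to a single residue, Remark~\ref{r:pathextension} to identify the zero-extensions as long paths, and the $\mu$-bounds (properties 2 and 3 of the choice of $\mu$, which assign $\mu=1$ precisely to the two "degenerate" simple pairs) to guarantee that the generic extension is realized and that no unexpected algebraic configurations appear. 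One must also check that the flag condition $\F$ and the induced-edge convention (Definition~\ref{d:K}, condition 4) do not introduce extra incidences that would change these $\delta$-counts — this is routine but needs to be stated.
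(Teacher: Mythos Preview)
Your overall strategy---show that $d(x/A)$ takes only the values $0$ (algebraic) and some fixed positive value (unique generic type), then invoke Lemma~\ref{l:Tmu_qe}---is exactly the paper's approach. But you miscompute the key quantity: $\delta(x/\{a,b\})=1$, not $n-1$.

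To see this, say $a$ is a point, $b$ a line incident with $a$, and $x$ a plane completing the flag. Then $b,x\in\res(a)$ and there is an \emph{edge} between $b$ and $x$ in that residue (you forgot it). By Lemma~\ref{l:polygon},
\[
\delta(x/\{a,b\}) \;=\; \delta_1(\{b,x\}) - \delta_1(\{b\}) \;=\; \bigl(2(n-1)-(n-2)\bigr) - (n-1) \;=\; 1.
\]
The other sort combinations give the same answer by duality or direct computation. More to the point, for \emph{any} strong finite $A\supseteq\{a,b\}$ one has $\delta(x/A)\leq 1$, so $d(x/A)\in\{0,1\}$ automatically.

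This means your entire ``middle step''---the case analysis inside the generalized $n$-gon to exclude $0<d(x/ab)<n-1$, invoking Remark~\ref{r:pathextension}, condition~6(b) of Definition~\ref{d:K}, the $\mu$-bounds, and so on---is unnecessary. The paper's proof is essentially three lines: take any strong finite $A\leq M$ containing $a,b$; then $d(x/A)\leq\delta(x/A)=1$; if $d(x/A)=0$ apply Lemma~\ref{l:Tmu_alg}, and if $d(x/A)=1$ then $Ax$ is itself strong, so Lemma~\ref{l:Tmu_qe} determines $\tp(x/A)$ uniquely. Your misreading of Remark~\ref{l:delta pos}(2) (which concerns a \emph{single} vertex in a residue with no edges, not one already adjacent to $b$) led you to manufacture a difficulty that is not there.
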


\begin{proof}
  Let $A$ be a strong finite subset of $M$ which contains $(a,b)$ and let
  $x$ be an element of $D_{(a,b)}$. Then $d(x/A)\leq\delta(x/A)=1$. If
  $d(x/A)=0$, then $x$ is algebraic over $A$ by the previous lemma. If
  $d(x/A)=1$, then $d(xA)=\delta(xA)$, then $Ax$ is also strong in $M$. So by Lemma~\ref{l:Tmu_qe} the
  type of $x$ over $A$ is uniquely determined. This property characterizes
  strongly minimal sets, see \cite{TZ}, Lemma 5.7.3.
\end{proof}

Note that for each $x\in \P\cup\Pi$, its residue is a generalized $n$-gon
as constructed in \cite{Te1}. These were shown to be almost strongly minimal:
\begin{theorem}\rm{(\cite{Te1} Thm 4.6)}\label{t:strminngon}
Let $x\in\P\cup\Pi$ and $x_0,x_1,x_2\in \res(x)$ such that the distance between $x_i,x_j, i\neq j, i,j=0,1,2$ in $\res(x)$ is maximal possible.
Then $\res(x)$ is contained in the definable closure of $D_{(x,x_0)}\cup\{x_1,x_2\}$.
\end{theorem}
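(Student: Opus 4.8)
The plan is to reduce the statement to \cite{Te1}, Theorem 4.6, by identifying $\res(x)$, equipped with its induced structure (inside a single residue only the bipartite edge relation survives) and with the notion of strong subgraph it inherits from $\Mmu$, with one of the generalized $n$-gons constructed in \cite{Te1}. Since the conclusion concerns only definable closure it is first-order and preserved under elementary equivalence, so it suffices to argue in $\Mmu$ itself, and in particular we may assume that $\Mmu$, hence each of its residues, is countable.

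The first and main step is this identification. By Lemma~\ref{l:polygon} the predimension of a finite $A\subseteq\res(x)$ equals $\delta_1(A)$ up to the additive constant $\delta(x)$, so the predimension of the polygon construction of \cite{Te1} is recovered; by Lemma~\ref{l:polygon} and Lemma~\ref{l:strongresidue} a finite $A_0\subseteq\res(x)$ is $\delta_1$-strong in $\res(x)$ precisely when $A_0x$ is $\delta$-strong in $\Mmu$ (after closing off), and Lemmas~\ref{l:delta_1} and~\ref{l:line} guarantee that strong extensions inside the polygon do not interact with $x$ or with $\cl(A_0x)\setminus A_0x$. The combinatorial constraints also match: condition~5 of Definition~\ref{d:K} forbids short cycles in $\res(x)$, clause~3 in the list defining $\mu$ forces $\mu=1$ on the path-pairs living in a residue, and clause~4 gives $\mu(A,B)\ge 2\delta(A)$ on the remaining residue-pairs, so the restriction of $\mu$ to simple pairs contained in $\res(x)$ is a legitimate $\mu$-function for the construction of \cite{Te1}. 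It then remains to see that $\res(x)$ is saturated in the sense of that construction: given a $\delta_1$-strong $A_0$ in $\res(x)$ and a $\delta_1$-strong extension $A_0\le C_0$ from the polygon class, one decomposes $C_0$ over $A_0$ into minimal extensions, forms the graphs $C_0x$, checks via Lemma~\ref{l:polygon} and Lemma~\ref{l:strongresidue} that $A_0x\le C_0x$, amalgamates to $\cl(A_0x)\otimes_{A_0x}C_0x$, which lies in $\Kmu$ by Corollary~\ref{c:isimpleamalgam} (resp.\ by Lemma~\ref{l:amalgaminK} and the argument in the proof of Theorem~\ref{t:amalgam} for the $0$-minimal pieces), and finally uses $\Kmu$-saturation of $\Mmu$ to embed it strongly over $\cl(A_0x)$; the image of $C_0$ is, again by Lemma~\ref{l:strongresidue}, $\delta_1$-strong in $\res(x)$. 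Since one also checks directly that $\res(x)$ satisfies the axioms of the polygon theory of \cite{Te1}, it follows that $\res(x)$ is the countable saturated generalized $n$-gon of \cite{Te1}, and Theorem 4.6 there applies: for $x_0,x_1,x_2\in\res(x)$ at pairwise maximal distance, $\res(x)$ lies in the definable closure of the pencil of vertices incident with $x_0$ inside $\res(x)$ together with $\{x_1,x_2\}$. By condition~4 of Definition~\ref{d:K} any vertex $y$ incident with $x_0$ is automatically incident with $x$, so that pencil is exactly $D_{(x,x_0)}$, and the statement follows.

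The step I expect to be the obstacle is the saturation transfer above: one must verify carefully that a $\delta_1$-strong extension inside a residue, transported into $\Mmu$ as an extension of $A_0x$, both stays in $\Kmu$ and remains $\delta$-strong, and in the other direction that extensions of $A_0$ produced by $\Kmu$-saturation of $\Mmu$ acquire no spurious edges to $x$ or to $\cl(A_0x)\setminus A_0x$; Lemmas~\ref{l:delta_1}, \ref{l:line} and~\ref{l:strongresidue} are precisely what is needed, but the bookkeeping -- in particular checking that the $\mu$-values induced on residue-pairs meet whatever admissibility hypotheses underlie \cite{Te1}, Theorem 4.6 -- must be done with care. A more self-contained alternative would be to rerun the coordinatization argument of \cite{Te1} directly in $\res(x)$: first show $d(v/D_{(x,x_0)}\cup\{x_1,x_2\})=0$ for every vertex $v$, using that in a generalized $n$-gon any two vertices at distance $<n$ are joined by a unique geodesic (the girth $2n$ condition, i.e.\ condition~5 of Definition~\ref{d:K} together with the $\mu=1$ clause), and then show that the type of $v$ over $D_{(x,x_0)}\cup\{x_1,x_2\}$ is unique, because any automorphism of $\res(x)$ fixing $x_0$, every vertex incident with $x_0$, and the two elements $x_1,x_2$ must be the identity -- this last rigidity of the frame is where the maximality of the pairwise distances among $x_0,x_1,x_2$ is essential, and it is the genuine content of \cite{Te1}, Theorem 4.6.
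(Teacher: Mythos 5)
The paper does not actually prove this statement. It is quoted as \cite{Te1}, Theorem~4.6, and the only justification the paper offers is the unsupported sentence immediately before it: ``Note that for each $x\in\P\cup\Pi$, its residue is a generalized $n$-gon as constructed in \cite{Te1}.'' Your proposal is essentially an attempt to substantiate that sentence and then transport the cited theorem, which is the right reading of the paper's intent; there is no in-paper argument to compare against line by line.

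As a fill-in for that missing justification, your sketch has the right shape. Lemma~\ref{l:polygon} gives the predimension match $\delta(A/x)=\delta_1(A)$, Lemma~\ref{l:strongresidue} gives one direction of the strongness transfer, and Lemmas~\ref{l:delta_1} and~\ref{l:line} control spurious edges to $x$ and to $\cl(A_0x)\setminus A_0x$; clauses~3 and~4 of the $\mu$-function were clearly chosen by the authors precisely so that the restriction to residue simple pairs is an admissible $\mu$-function in the sense of \cite{Te1}, and Corollary~\ref{c:isimpleamalgam}, Lemma~\ref{l:amalgaminK} and $\Kmu$-saturation of $\Mmu$ are the tools for the saturation transfer. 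Two points you should tighten. First, the step you flag yourself (that the $\mu$-values induced on residue pairs give a legitimate $\mu$ for the construction of \cite{Te1} and that saturation goes back and forth) is genuinely a gap: it is plausible and the lemmas you cite are the right ones, but it is not a consequence you can just read off, and in particular the matching of simple pairs $(A,B)$ in $\res(x)$ (bipartite data) with simple pairs $(Ax,B)$ in $\Mmu$ needs an explicit bijection respecting minimality, which is not literally one of the stated lemmas. Second, you should note explicitly that definable closure in the cited theorem is computed inside $\res(x)$ with its induced (bipartite) structure, whereas in the present paper it is needed in $\Mmu$; since $\res(x)$ is a definable subset of $\Mmu$ over $x$, $\dcl$-membership inside $\res(x)$ implies $\dcl$-membership in $\Mmu$ over $x$, so the transfer is harmless, but it is worth saying. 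Your closing ``self-contained alternative'' is not really an alternative: the rigidity of the frame $(x_0,x_1,x_2)$ relative to the pencil $D_{(x,x_0)}$ is exactly the content of \cite{Te1}, Theorem~4.6, so that paragraph is a restatement rather than an independent proof.
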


\begin{theorem}\label{t:almstrmin}
 The theory $\Tmu$ is almost strongly minimal: there is a strongly minimal 
set $D\subseteq M_\mu$ together with a finite set $B\subseteq M_\mu$ such that  
any element of $\Mmu$ is definable over $DB$.
\end{theorem}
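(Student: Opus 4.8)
The plan is to exhibit an explicit strongly minimal set $D$ and a finite parameter set $B$ over which every element of $M_\mu$ is definable. The natural candidate is suggested by Theorem~\ref{t:strminngon}: fix a complete flag, say a plane $e_0$, a line $l_0$ and a point $p_0$ with $(p_0,l_0,e_0)\in\F$, and take $D=D_{(p_0,l_0)}$, which is strongly minimal by Proposition~\ref{P:Da_str_min}. The task then is to show that with a suitable finite $B$ (which we are free to choose, and which will include $p_0,l_0,e_0$ and finitely many further auxiliary vertices) every vertex of $M_\mu$ lies in $\dcl(DB)$.

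\medskip
\noindent\textbf{Key steps.} First I would recall that, since $M_\mu$ is a geometry of type $\bullet\overset{n}{-}\bullet\overset{n}{-}\bullet$, the residues $\res(p_0)$ and $\res(e_0)$ are generalized $n$-gons, and by Theorem~\ref{t:strminngon} each such residue is contained in the definable closure of $D_{(x,x_0)}\cup\{x_1,x_2\}$ for an appropriate opposite triple $x_0,x_1,x_2$ in the residue. So after adjoining finitely many such witnessing vertices (two in $\res(p_0)$, two in $\res(e_0)$, chosen at maximal distance from $l_0$ inside the respective residues) to $B$, the whole of $\res(p_0)$ and $\res(e_0)$ becomes definable over $D\cup B$ — provided $D_{(p_0,l_0)}$ controls $D_{(p_0,x_0)}$ for the relevant $x_0\in\res(p_0)$ and likewise on the plane side. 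Here one uses that $\res(p_0)$ is itself almost strongly minimal with strongly minimal set $D_{(p_0,l_0)}$, so all the sets $D_{(p_0,x_0)}$ lie in a common strongly minimal geometry; after adding one more parameter if needed we get $\res(p_0),\res(e_0)\subseteq\dcl(DB)$. In particular, every line through $p_0$ and every line in $e_0$, every point of $e_0$, and every plane through $p_0$ is now definable over $DB$.

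\medskip
\noindent\textbf{Propagating definability through the geometry.} The remaining step is to show that these "local" pieces generate everything. Given an arbitrary point $p\in\P$: by the generalized-$n$-gon axioms applied inside $\res(e_0)$ (or using axioms 3 and 4 of the definition of the geometry), one connects $p$ to the already-controlled set $\res(p_0)\cup\res(e_0)$ by a uniformly bounded amount of incidence data — e.g.\ $p$ lies on a unique line with $p_0$ if they are collinear, or lies in a unique plane with $p_0$ otherwise, and that line or plane is already in $\dcl(DB)$; one then pins down $p$ inside the residue of that definable line or plane, which is again a generalized $n$-gon reachable from the controlled data. Running this argument (and its dual for planes, and the evident version for lines) shows that an arbitrary vertex of $M_\mu$ is determined by finitely many definable incidences over $DB$, hence lies in $\dcl(DB)$. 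Since the whole configuration of auxiliary vertices is finite, $B$ is finite, and this is exactly the assertion of almost strong minimality.

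\medskip
\noindent\textbf{Main obstacle.} The delicate point is not the combinatorial propagation — that is essentially the connectedness of the incidence graph together with uniqueness axioms 3 and 4 — but rather ensuring that a \emph{single} strongly minimal set $D=D_{(p_0,l_0)}$ suffices, i.e.\ that the various $D_{(a,b)}$ entering the argument (one for the residue of $p_0$, one for the residue of $e_0$, and the local ones needed to locate a general $p$ inside a residue) are all non-orthogonal to $D$ and in fact interdefinable with it over finitely many parameters. This is where one must invoke that each residue is itself almost strongly minimal over its own copy of the Tits-polygon strongly minimal set together with the way these residues overlap along the line $l_0$ — the flag data $(p_0,l_0,e_0)$ ties the point-residue and plane-residue strongly minimal sets together, so that after adding $p_0,l_0,e_0$ and the four distance-maximal witnesses to $B$ everything collapses to definability over $D_{(p_0,l_0)}\cup B$. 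Verifying this non-orthogonality carefully, using Lemma~\ref{l:Tmu_qe} and the dimension function $d$, is the crux; once it is in place the rest is routine.
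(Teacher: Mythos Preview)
Your outline has the right shape---fix a flag, take $D$ of the form $D_{(a,b)}$, use Theorem~\ref{t:strminngon} to control two residues, then propagate---but two things go wrong.

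First, your choice $D=D_{(p_0,l_0)}$ manufactures exactly the non-orthogonality problem you flag as the ``main obstacle''. The paper avoids it by choosing $D=D_{(p_0,e_1)}$ for an incident point--plane pair: since $e_1\in\res(p_0)$ and $p_0\in\res(e_1)$, the single set $D_{(p_0,e_1)}=D_{(e_1,p_0)}$ feeds directly into Theorem~\ref{t:strminngon} for \emph{both} residues, and no non-orthogonality argument is needed anywhere in the proof.

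Second, and more seriously, your parameter set is too thin for the propagation step. Suppose $p$ is an arbitrary point collinear with $p_0$; the unique line $l$ through them lies in $\res(p_0)\subseteq\dcl(DB)$, but $l$ carries infinitely many points and you have no further definable datum to single out $p$ on $l$ (the residue of a line is complete bipartite, not an $n$-gon, so your phrase ``pin down $p$ inside the residue of that definable line'' has no content). If instead $p$ and $p_0$ share a unique plane $e$, then $e\in\res(p_0)$ is definable, but locating $p$ inside $\res(e)$ requires $\res(e)\subseteq\dcl(DB)$, which you have only for your fixed $e_0$. The paper handles both issues by enlarging $B$ to contain a strongly embedded $6$-cycle $(p_0,e_1,p_2,e_3,p_4,e_5)$ of points and planes joined by $E^2$-edges (plus two residue witnesses). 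This enables a bootstrap (Claims~2--4): first $\res(e_3),\res(p_4)$ via intersections with $p_0$; then $\res(e)$ for every plane $e\ni p_0$ by intersecting points of $e$ with $p_4$; and so on. The decisive final step (Claim~5) uses the three reference points $p_0,p_2,p_4$ simultaneously: if an arbitrary point $p$ were collinear with each of them, the resulting configuration of lines would have negative $\delta$ over $B$, contradicting $B\leq M_\mu$; hence $p$ shares a plane with some $p_i$, and that plane's residue is already controlled. This $\delta$-counting against a strong base---not any non-orthogonality computation---is the real crux, and it is absent from your sketch.
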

\begin{proof}
We start by picking a parameter set 
$B_0=(p_0, 
e_1, p_2, e_3, p_4,e_5, p_6=p_0)$, consisting of a $6$-cycle of points $p_i$ 
and planes $e_i$ in $\Mmu$ and pick $x_0\in\res(p_0)$ at maximal distance from 
$e_1, e_5$ and $x_1\in\res(e_1)$ at maximal distance from $p_0,p_2$. Note that 
$B:=B_0\cup\{x_0,x_1\}$ is a $\Kmu$-structure 
and hence can be strongly embedded into $\Mmu$, whence from now on we assume 
$B\leq \Mmu$.  By Proposition~\ref{P:Da_str_min}, the residue 
$D_{(p_0,e_1)}$ of the partial flag $(p_0,e_1)$ is a strongly minimal set. We 
will show that $\Mmu\subseteq \dcl(BD)$.\\

\noindent
{\bf Claim 1:} The residues of $p_0$ and $e_1$ are contained in 
$\dcl(BD)$.

This follows immediately from Theorem~\ref{t:strminngon}. \\

\noindent
{\bf Claim 2:} The residues of $e_3$ and $p_4$ are contained in 
$\dcl(BD)$.

It suffices to show that any point in $\res(e_3)$ 
is contained in $\dcl(BD)$, as every line is uniquely determined by any two 
points in that line. 
Thus, consider 
$p\in\res(e_3)$ arbitrary. If the points $p$ and 
$p_0$ are contained in a common line $l\in\res(p_0)\subseteq \dcl(BD)$, which 
is necessarily unique, then $p$ is the unique point 
contained in $e_3$ and $l$, as $l$ is not contained in $e_3$. 

If $p$ and $p_0$ do not intersect in a common line, then there is some 
plane $e\in \res(p_0)$ which contains the two points. Now, either $e$ and 
$e_3$ intersect 
exactly in $p$, whence $p\in\dcl(BD)$, or they intersect in some line $l$ 
which is uniquely determined by $e$ and $e_3$ and thus in $\dcl(BD)$. Now 
consider another line $l'\in\res(e_3)$ connected to $p$. If there is a plane 
in $\res(p_0)$ connected to $l'$, then $l'\in\dcl(BD)$, whence also 
$p\in\dcl(BD)$, as it is uniquely determined by $l$ and $l'$. Otherwise 
consider a new point $p'\in\res(e_3)$ connected to $l'$. If $p'$ and $ p_0$ 
intersect in a line, 
then as above, $p'\in\dcl(BD)$, whence also $p\in\dcl(BD)$, as it is uniquely 
determined by $p'$ and $l$. If $p'$ and $ p_0$ intersect in a plane $e'$, 
then 
either $e'$ and $e_3$ intersect only in $p'$ and $p'\in\dcl(BD)$, or they 
intersect in a unique 
line $l''\in\dcl(BD)$. Then $p$ lays 
on the unique path of length $4$ between $l''$ and $l$ in $\res(e_3)$, whence 
$p\in\dcl(BD)$. Hence $\res(e_3)\subseteq\dcl(BD)$, as desired. 

A symmetric 
argument shows that also 
$\res(p_4)\subseteq \dcl(BD)$.\\

\noindent
{\bf Claim 3:} If $e_7\in\res(p_0)$ and $p_8\in\res(e_1)$, then the 
residues 
of $e_7$ and $p_8$ are contained in $\dcl(BD)$.

We show the statement for $\res(e_7)$, the argument for $p_8$ is exactly the 
same. As before, it suffices to show that any point in $\res(e_7)$ is contained 
in $\dcl(BD)$. Assume $p$ to be an arbitrary point in $\res(e_7)$. Once more, 
the points $p$ and $p_4$ either intersect in a unique plane $e$ or a 
unique line $l$, contained in $\res(p_4)\subseteq \dcl(BD)$. Exactly as in 
Claim 
2, substituting $e_3$ and $p_0$ by $e_7$ and $p_4$, one can see that  
$p\in\dcl(BD)$.  \\

\noindent
{\bf Claim 4:} The residues of $p_2$ and $e_5$ are contained in 
$\dcl(BD)$. 

We show that all planes in $\res(p_2)$ are contained in $\dcl(BD)$. Let $e$ be 
an arbitrary plane in $\res(p_2)$. Then $e$ and $e_7$ intersect in a unique 
line $l$ or a unique point $p$ in $\res(e_7)\subseteq\dcl(BD)$. Exactly as 
before we show that $e$ is contained in $\dcl(BD)$. \\

\noindent
{\bf Claim 5:} Any vertex of $\Mmu$ is contained in $\dcl(BD)$.

It suffices to show that an arbitrary point $p$ is contained in $\dcl(BD)$. 
Clearly, for any point $p\in\res(e_i)$ and for any plane $e$ in $\res(p_i)$ 
for $i=0,\dots,5$ we have that $\res(p)$ and $\res(e)$ are contained in 
$\dcl(BD)$ (the proof of \textit{Claim 3} applies). Hence, if the point $p$ 
intersects with any of the $p_i$ for $i=0,2,4$ in a unique plane, it already is 
contained in $\dcl(BD)$. On the other hand, if $p$ intersects with each $p_i$ 
in 
a unique 
line $l_i$, then we obtain a substructure that contradicts the fact that $B$ is 
strongly embedded in $\Mmu$: If $l_i=l_j$ for some $i\neq j$, then the 
extension of $B$ by $l=l_i$ is an extension of negative delta. If all the $l_i$ 
are distinct, then the extension of $B$ by the $l_i$ and $p$ is an extension of 
negative delta. Hence, any point $p$ has to intersect in a unique plane with 
one of the $p_i$ and is thus definable over $BD$.
\end{proof}

\begin{corollary}\label{c:weakimaginaries}
The theory $\Tmu$ has weak elimination of imaginaries.
\end{corollary}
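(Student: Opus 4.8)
The plan is to deduce weak elimination of imaginaries from the model-theoretic structure of $\Tmu$ established above, via the usual criterion for stable theories. The preparatory facts I would assemble are: (i) $\Tmu$ is $\omega$-stable of finite Morley rank, being almost strongly minimal by Theorem~\ref{t:almstrmin}, so forking is available, and (standard for Hrushovski constructions, cf.\ \cite{TZ}) a real tuple $\bar a$ is forking-independent from $\bar b$ over a real set $C$ precisely when $d(\bar a/\bar bC)=d(\bar a/C)$; (ii) for a finite real tuple $\bar a$, $\acl(\bar a)$ is finite, consisting of $\cl(\bar a)$ together with finitely many points of $d$-dimension $0$ over $\bar a$ (Lemma~\ref{l:Tmu_alg} and Lemma~\ref{l:alg=0}), and such points do not affect any $d$-computation, so $\acl$ and $\cl$ detect forking of real tuples in the same way; (iii) by Lemma~\ref{l:Tmu_qe} together with amalgamation (Theorem~\ref{t:amalgam}), types over real algebraically closed sets are stationary, and the nonforking extension of $\tp(\bar a/C_0)$ to a real algebraically closed $B\supseteq C_0$ is realized by the copy $\bar a'$ for which $\cl(\bar a'B)$ is the free amalgam $\cl(\bar a'C_0)\otimes_{\cl(C_0)}\cl(B)$ (using Lemma~\ref{l:strongamalgam} and Remark~\ref{r:addition}); and (iv) if $A,B\leq\Mmu$ then $A\cap B\leq\Mmu$ (Lemma~\ref{l:closure}).

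With these in hand I would invoke the standard reduction: a stable theory in which types over real algebraically closed sets are stationary has weak elimination of imaginaries as soon as, for every real tuple $\bar a$ and every real algebraically closed set $B$, the canonical base $\Cb(\tp(\bar a/B))$ is interalgebraic over $\emptyset$ with a real tuple (cf.\ the treatment of imaginaries in \cite{TZ}). So fix such $\bar a$ and $B$, put $e=\Cb(\tp(\bar a/B))$, and take a Morley sequence $(\bar a_i)_{i<\omega}$ of $\tp(\bar a/B)$ inside $\Mmu$. Then $e\in\dcl^{eq}(\bar a_0\cdots\bar a_m)$ for some finite $m$; write $\bar a^*=\bar a_0\cdots\bar a_m$ and $C_0=\cl(\bar a^*)\cap B$, which is strong in $\Mmu$ by Lemma~\ref{l:closure}. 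I would then argue that $\cl(\bar a^*B)$ is the free amalgam of $\cl(\bar a^*)$ and $B$ over $C_0$; by (iii) this says precisely that $\tp(\bar a^*/B)$ is the nonforking extension of $\tp(\bar a^*/C_0)$, whence $e\in\acl^{eq}(C_0)$ with $C_0$ a \emph{real} finite tuple. For the reverse inclusion $C_0\subseteq\acl^{eq}(e)$, I would use that $C_0$ is contained in the closure of any sufficiently long initial segment of the Morley sequence and that, in a geometry of type $\bullet\overset{n}{-}\bullet\overset{n}{-}\bullet$, a line is determined by two points it contains, a plane by two non-collinear points, a point by two non-coplanar planes, and an interior vertex of a residue by a path of length $n-1$ between two of its neighbours; this lets one recover each element of $C_0$ from $e$ and finitely many independent realizations of the type, hence from $\acl^{eq}(e)$. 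Thus $e$ is interalgebraic with the real tuple $C_0$, and weak elimination of imaginaries follows. (Full elimination is not to be expected, since the collapsing function $\mu$ produces algebraic configurations over $\emptyset$ having several conjugate realizations.)

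The hard part will be the free-amalgamation claim of (iii) in the form just used: that forking-independence of real tuples in $\Mmu$ is witnessed by gluing their closures freely along their overlap. For the \emph{submodular} Hrushovski constructions this is routine, but here $\delta$ is not submodular, so the argument must run through Corollary~\ref{c:submodular}, Lemma~\ref{l:strongamalgam} and the auxiliary notion of $\L$-strongness. Concretely, one has to rule out that inside $\Mmu$ some line of $B\setminus C_0$ is incident to a point or plane of $\cl(\bar a^*)\setminus C_0$, and dually, so that the only cross-edges between $\cl(\bar a^*)$ and $B$ are the induced $E^2$-edges forced by lines of $C_0$; here the facts that $\bar a^*$ is a Morley sequence and that $\cl(\bar a^*)$ and $B$ are strong, combined with Corollary~\ref{c:submodular} and with $C_0$ being $\L$-strong, are what forbid such an extra incidence. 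Once the amalgam is known to be free, the forking calculus and the geometric recovery of $C_0$ sketched above finish the proof.
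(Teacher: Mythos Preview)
Your route is quite different from the paper's. The paper's proof is two lines: by Theorem~\ref{t:almstrmin} the model $\Mmu$ lies in $\dcl(D\cup B)$ for a strongly minimal $D$ and finite $B$, so every imaginary of $\Mmu$ already lives in $(D,B)^{eq}$; since $\acl(B)$ is infinite, weak elimination then drops out of \cite{TZ}, Lemma~8.4.11, which handles the strongly minimal (hence almost strongly minimal) situation once and for all. No canonical-base computation, no free-amalgam analysis, no geometric recovery is carried out.

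Your canonical-base strategy is the standard one for Hrushovski constructions and can in principle be pushed through, but two steps in your sketch are not right as written. For the reverse inclusion $C_0\subseteq\acl^{eq}(e)$, the ``geometric recovery'' via lines determined by two points, planes by two points, etc., is beside the point: those facts yield definability among real elements, not algebraicity over the imaginary~$e$. The correct argument is pure forking calculus: from $e=\Cb(\bar a/B)$ one has $\bar a\downfree_e B$, hence $C_0\downfree_e\bar a$ by symmetry and monotonicity (as $C_0\subseteq B$); since also $C_0\subseteq\acl(\bar a)$, the algebraic type $\tp(C_0/e\bar a)$ does not fork over $e$, so $C_0\subseteq\acl^{eq}(e)$. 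For the forward inclusion, you need $\bar a^*\downfree_{C_0}B$ with $C_0=\cl(\bar a^*)\cap B$, and you propose to get this by exhibiting the free-amalgam structure directly. But Lemma~\ref{l:dindep} shows that the free-amalgam structure is \emph{equivalent} to $d$-independence, not a consequence of the Morley-sequence hypothesis (which only gives independence over the imaginary $e$); and in this non-submodular setting there is no a priori reason why a single $E$-edge between $\cl(\bar a^*)\setminus C_0$ and $B\setminus C_0$ should be forbidden---such an edge has $\delta$-cost $n-1>0$ over either closed side. Your appeal to Corollary~\ref{c:submodular} and $\L$-strongness does not by itself exclude this. So while your outline is reasonable, the paper's one-line reduction via almost strong minimality both avoids these difficulties and is what the authors actually do.
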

\begin{proof}
Since $\Mmu$ is contained in the definable closure of a strongly minimal
set $D$ and a finite set $B$, it is contained in $(D,B)^{eq}$ and the result follows from
\cite{TZ}, Lemma 8.4.11. because $\acl(B)$ is easily seen to be infinite.
\end{proof} 

In order to show that forking independence is determined by the function~$d$, we
define $d$-independence as
\[\dindep{A}{B}{C}
\mbox{ if and only if } d(A/B)=d(A/BC).\]

We will show that $d$-independence coincides with the independence coming
from $\omega$-stability.
As in \cite{Te2}  we use the following characterization of independence:

\begin{lemma}\label{l:dindep}
For $A, B, C\subset \Mmu$ we have $\dindep{A}{B}{C}$ if and only if there is $B\subseteq B'\subseteq\acl(B)$  such that
$\cl(ABC)\cong \cl(AB)\otimes_{B'} \cl(BC)$. In particular, in this case $\cl(ABC)=\cl(AB)\cl(BC)$.
\end{lemma}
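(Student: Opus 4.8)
The plan is to prove both directions via the standard characterization of $\omega$-stable (forking) independence by the predimension function $d$, following the template of \cite{Te2}. First I would establish the right-to-left direction, which is the easy one: if $\cl(ABC)\cong \cl(AB)\otimes_{B'}\cl(BC)$ for some $B\subseteq B'\subseteq\acl(B)$, then by Remark~\ref{r:addition} (addition of deltas in free amalgams) applied over $B'$, together with the fact that $\delta(B')=d(B')$ (as $B'\subseteq\acl(B)$ forces $\cl(B')=\cl(B)$ and $d$ is computed on closures), one computes $\delta(\cl(ABC))-\delta(\cl(BC)) = \delta(\cl(AB))-\delta(B')$. Since $\cl(AB),\cl(BC),\cl(ABC)$ are all strong in $\Mmu$ (closures are strong), these deltas compute the relevant $d$-values, giving $d(A/BC)=d(A/B)$, i.e. $\dindep{A}{B}{C}$. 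The final "in particular" clause, $\cl(ABC)=\cl(AB)\cl(BC)$, is immediate from the shape of the free amalgam.

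For the left-to-right direction, suppose $\dindep{A}{B}{C}$. I would first reduce to the case where $B=\cl(B)$, and set $B'=\cl(AB)\cap\cl(BC)$; the key claim is that $B'\subseteq\acl(B)$, equivalently $d(B'/B)=0$, which follows from $d(B)\leq d(B')\leq$ the value forced by the independence hypothesis — here one uses Lemma~\ref{l:closure} (intersection of strongs is strong) to see $B'\leq\Mmu$, so $\delta(B')=d(B')$, and then the submodularity-type inequality from Corollary~\ref{c:submodular}/Lemma~\ref{l:submodular} (valid because $\cl(AB)\cap\cl(BC)\leq_\L$ everything in sight) to squeeze $d(B'/B)\leq d(A/B)-d(A/BC)=0$. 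Once $B'\leq\acl(B)$ is known, I would show $\cl(ABC)=\cl(AB)\otimes_{B'}\cl(BC)$ by verifying there are no edges between $\cl(AB)\setminus B'$ and $\cl(BC)\setminus B'$ other than the induced ones coming from lines in $B'$: any genuine extra edge would, by Lemma~\ref{l:closer} (an extra induced $E^2$-edge drops delta by $n-2$) or by a direct delta count, force $\delta(\cl(ABC)) < \delta(\cl(AB))+\delta(\cl(BC))-\delta(B')$, hence $d(A/BC) < d(A/B)$, contradicting $d$-independence. Finally I would check that $\cl(AB)\otimes_{B'}\cl(BC)$ actually lies in $\Kmu$ and is strong in $\Mmu$ (using Lemma~\ref{l:strongamalgam} and Corollary~\ref{c:isimpleamalgam}, decomposing $\cl(BC)$ over $B'$ into minimal extensions), so that it must coincide with the closure $\cl(ABC)$ by uniqueness of the closure as the smallest strong superset.

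The main obstacle I anticipate is the non-submodularity of $\delta$: the clean equalities $\delta(X\otimes_Y Z)=\delta(XY)+\delta(XZ)-\delta(Y)$ and the submodular inequality $d(C/A)\geq d(C/B)$ hold only under the $\L$-strongness hypotheses isolated in Corollary~\ref{c:submodular}, so every delta manipulation above must be accompanied by a verification that the relevant pairs are $\L$-strong — which is exactly why the amalgamation base $B'$ must be taken inside $\acl(B)$ rather than an arbitrary intermediate set, and why we work throughout with closures (which are strong, hence $\L$-strong in $\Mmu$). Keeping track of these side conditions, and in particular showing $B'=\cl(AB)\cap\cl(BC)$ satisfies them, is the technical heart; the rest is a routine translation between $d$-values of closures and $\delta$-values, exactly as in \cite{Te2}.
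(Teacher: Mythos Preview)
Your overall strategy is the same as the paper's --- set $B'=\cl(AB)\cap\cl(BC)$ and show the three conditions (i) $B'\subseteq\acl(B)$, (ii) no non-induced edges cross between the two sides, (iii) $\cl(ABC)=\cl(AB)\cl(BC)$ --- but the paper packages this more efficiently. Instead of treating the two directions separately and invoking Corollary~\ref{c:submodular}, the paper writes a single inclusion--exclusion identity
\[
\delta(\cl(AB)\cl(BC))=\delta(\cl(AB))+\delta(\cl(BC))-\delta(\cl(AB)\cap\cl(BC))-e,
\]
where $e\geq 0$ is an explicit edge/flag term, and then chains the inequalities $\delta(\cl(ABC))\leq\delta(\cl(AB)\cl(BC))$ and $\delta(B)\leq\delta(B')$ to get $d(A/BC)\leq d(A/B)$ with equality iff all three conditions hold simultaneously. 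This yields both directions at once.

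Two specific comments on your plan. First, your appeal to Corollary~\ref{c:submodular} to ``squeeze $d(B'/B)\leq d(A/B)-d(A/BC)$'' is not quite the right tool: that corollary gives $\delta(C/A)\geq\delta(C/B)$ under $\L$-strongness hypotheses, which is a different shape from the submodular inequality $d(ABC)+d(B')\leq d(AB)+d(BC)$ you actually need. To prove the latter you essentially have to write down the paper's error term $e$ anyway; it does not follow formally from~\ref{c:submodular}. Second, your final step --- checking that $\cl(AB)\otimes_{B'}\cl(BC)\in\Kmu$ via Lemma~\ref{l:strongamalgam} and Corollary~\ref{c:isimpleamalgam} --- is unnecessary: once you know $e=0$ and $\delta(\cl(ABC))=\delta(\cl(AB)\cl(BC))$, the free amalgam \emph{is} $\cl(ABC)$ as a subgraph of $\Mmu$, so membership in $\Kmu$ and strongness are automatic. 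The amalgamation lemmas are not used here.
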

\begin{proof}
Let $A,B, C\subseteq\Mmu$. We may assume that $B$ is strong in $\Mmu$, so $B=\cl(B)$.
Then
\[\delta(\cl(AB)\cl(BC))=\delta(\cl(AB))+\delta(\cl(BC))-\delta(\cl(AB)\cap\cl(BC))-e.\]
where 
\[e=(2(n-1)-1)E(\widehat{\cl(AB)},\widehat{\cl(BC)})+(n-1)(E^2(\widehat{\cl(AB)},\widehat{\cl(BC)})-F(\widehat{\cl(AB)},\widehat{\cl(BC)})\]

and $\widehat{\cl(AB)}=\cl(AB)\setminus\cl(BC)$ and $\widehat{\cl(BC)}=\cl(BC)\setminus\cl(AB)$.

Since 
\[0\leq\delta(\cl(AB)/\cl(BC))=\delta(\cl(AB)\cl(BC))-\delta(\cl(BC))\]
\[\leq \delta(\cl(AB)/\cl(AB)\cap\cl(BC))=\delta(\cl(AB))-\delta(\cl(AB)\cap\cl(BC)\] we see that $e\geq 0$.

Clearly,
\[\delta(\cl(ABC))\leq \delta(\cl(AB)\cl(BC))\]and since $B$ is closed also
\[\delta(B)\leq\delta(\cl(AB)\cap\cl(BC)).\]
Hence
\[\delta(\cl(ABC))+\delta(B)\leq\delta(\cl(AB)\cl(BC))+\delta(\cl(AB)\cap\cl(BC))+e\].\[=\delta(\cl(AB))+\delta(\cl(BC))\]
Therefore equality holds if and only if 
$e=0, \delta(B)=\delta(\cl(AB)\cap\cl(BC))$ and 
$\delta(\cl(ABC))= \delta(\cl(AB)\cl(BC))$
if and only if 
\[d(A/B)=\delta(\cl(AB))-\delta(B)=\delta(\cl(ABC))-\delta(\cl(BC))=d(A/BC).\]

>From the fact that $\cl(AB)$ and $\cl(BC)$ are strong in $\Mmu$ we see that $e=0$ if and only if  any $E^2$-edge between vertices of $\widehat{\cl(AB)}$ and $\widehat{\cl(BC)}$ is induced by a unique line in $\cl(AB)\cap\cl(BC)$ and no $E$-edges occur between the two sides.
Thus $e=0$ is equivalent to $\cl(AB)\cl(BC)=\cl(AB)\otimes_{\cl(AB)\cap\cl(BC)}\cl(BC)$ and
$\delta(B)=\delta(\cl(AB)\cap\cl(BC))$ is equivalent to $\cl(AB)\cap\cl(BC)\subseteq \acl(B)$. 
Finally note that if $\delta(\cl(ABC))= \delta(\cl(AB)\cl(BC))$, then we have $\cl(ABC)=\cl(AB)\cl(B)$, proving the lemma.
\end{proof}

For future reference we record the following corollary:
\begin{corollary}\label{c:1-ample}
Suppose $\notind{A}{B}{C}$, $\cl(AC)\cap\cl(BC)=\cl(B)$ and $\cl(ABC)=\cl(AB)\cl(BC)$.
Then there is an edge $(a,c)$ with $a\in \cl(A)\setminus\acl(B),c\in\cl(C)\setminus\acl(B)$.
\end{corollary}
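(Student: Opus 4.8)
The plan is to unwind the hypotheses using Lemma~\ref{l:dindep} and then locate the required edge by analysing the quantity $e$ appearing in its proof. First I would recall that by Lemma~\ref{l:dindep}, the failure of $d$-independence, $\depen{A}{B}{C}$, together with the assumptions $\cl(AC)\cap\cl(BC)=\cl(B)$ and $\cl(ABC)=\cl(AB)\cl(BC)$ forces the quantity $e$ to be strictly positive: indeed, equality in the chain of inequalities of that proof holds precisely when $e=0$, $\delta(B)=\delta(\cl(AB)\cap\cl(BC))$, and $\delta(\cl(ABC))=\delta(\cl(AB)\cl(BC))$; the last two equalities are given to us (after replacing $A$ by $\cl(A)$ and so on, and using that $\cl(AC)\cap\cl(BC)=\cl(B)$ is strong), so if all three held we would get $d$-independence. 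Hence $e>0$.

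Next I would read off what $e>0$ means geometrically. By the description at the end of the proof of Lemma~\ref{l:dindep}, since $\cl(AB)$ and $\cl(BC)$ are strong in $\Mmu$, the term $e$ counts (with positive coefficients $2(n-1)-1$ and $n-1$) the $E$-edges between $\widehat{\cl(AB)}=\cl(AB)\setminus\cl(BC)$ and $\widehat{\cl(BC)}=\cl(BC)\setminus\cl(AB)$, together with the non-induced $E^2$-edges between these two sides. So $e>0$ yields an edge $(a,c)$ — either an $E$-edge, or a non-induced $E^2$-edge — with $a\in\cl(AB)\setminus\cl(BC)$ and $c\in\cl(BC)\setminus\cl(AB)$. (A non-induced $E^2$-edge cannot be induced by a line in $\cl(AB)\cap\cl(BC)$, so it genuinely contributes to $e$; an induced one would not.)

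It then remains to improve the endpoints from $\cl(AB)\setminus\cl(BC)$ and $\cl(BC)\setminus\cl(AB)$ to $\cl(A)\setminus\acl(B)$ and $\cl(C)\setminus\acl(B)$. For this I would use that $\cl(AB)=\cl(AC)\cap\cl(BC)$ is... more carefully: we are given $\cl(AC)\cap\cl(BC)=\cl(B)$, so any vertex of $\cl(AB)$ lying in $\cl(AC)$ but not in $\cl(B)$ is automatically not in $\cl(BC)$, and symmetrically. The point $a$ lies in $\cl(AB)$ but not in $\cl(BC)\supseteq\cl(B)$, hence not in $\acl(B)$ either once we note $\acl(B)$ meets $\cl(AB)$ exactly in $\cl(B)$ (because $\cl(AB)\cap\acl(B)$ is a strong subset with the same $d$-value as $B$, hence equals $\cl(B)$); and similarly $a\notin\cl(BC)$ means $a$ is witnessed already inside the $A$-side, so $a\in\cl(A)\setminus\acl(B)$ after possibly absorbing $B$-part, and dually $c\in\cl(C)\setminus\acl(B)$. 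The main obstacle I anticipate is exactly this last bookkeeping step: making precise that the endpoints of the $e$-edge can be chosen in $\cl(A)$ and $\cl(C)$ rather than merely in $\cl(AB)$ and $\cl(BC)$, which requires carefully exploiting $\cl(AC)\cap\cl(BC)=\cl(B)$ and the strongness of all the closures involved; everything else is a direct citation of Lemma~\ref{l:dindep} and its proof.
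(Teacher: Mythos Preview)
Your approach is exactly what the paper intends: the corollary is recorded there without proof, as an immediate consequence of Lemma~\ref{l:dindep}, and the argument is precisely that the hypotheses supply two of the three conditions characterizing $d$-independence in that lemma, so $\depen{A}{B}{C}$ forces $e>0$, which yields the edge between the two sides.

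Two points on the details. First, the printed hypothesis $\cl(AC)\cap\cl(BC)=\cl(B)$ is a misprint for $\cl(AB)\cap\cl(BC)=\cl(B)$: as written it would force $C\subseteq\cl(AC)\cap\cl(BC)=\cl(B)$, hence $\indep{A}{B}{C}$, contradicting the first hypothesis. What the proof of Lemma~\ref{l:dindep} actually needs, and what you silently use when you say ``the last two equalities are given to us'', is $\cl(AB)\cap\cl(BC)\subseteq\acl(B)$. You should flag the discrepancy rather than quote the typo as if it supplied this.

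Second, the ``bookkeeping'' obstacle you identify at the end is genuine, and your sketched fix is not an argument. From $e>0$ one obtains an edge between $\cl(AB)\setminus\cl(BC)$ and $\cl(BC)\setminus\cl(AB)$; nothing in the hypotheses moves the endpoints into $\cl(A)$ and $\cl(C)$, and the phrase ``so $a\in\cl(A)\setminus\acl(B)$ after possibly absorbing the $B$-part'' has no content. What genuinely follows from Lemma~\ref{l:dindep} is the weaker conclusion $a\in\cl(AB)\setminus\acl(B)$, $c\in\cl(BC)\setminus\acl(B)$. The gap between this and the printed conclusion is another imprecision in the paper's statement rather than something you are expected to bridge.
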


We show next that forking can be described from the graph metric like in \cite{Te2}, Thm 2.35:

\begin{proposition}\label{p:nonforking}
In $\Tmu$, non-forking coincides with $d$-independence.
\end{proposition}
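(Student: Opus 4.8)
The plan is to show that $d$-independence, which we already know refines forking-independence's defining properties via Lemma~\ref{l:dindep}, satisfies all the axioms characterising non-forking in a simple (here $\omega$-stable) theory, and then invoke the uniqueness of such an independence relation. Concretely, I would verify: (i) \emph{invariance} under $\Aut(\Mmu)$, which is immediate since $d$ is defined from $\delta$ and closures, both automorphism-invariant; (ii) \emph{monotonicity} and \emph{transitivity} in the right-hand argument, which follow from the additivity $d(A/BC)=d(A/B)+d(AB\cap\text{stuff})$-style identities and the fact that $d(A/-)$ is non-increasing; (iii) \emph{symmetry}, i.e.\ $\dindep{A}{B}{C}\iff\dindep{C}{B}{A}$, which is where Lemma~\ref{l:dindep} does the real work, since the characterisation there via $\cl(ABC)\cong\cl(AB)\otimes_{B'}\cl(BC)$ is manifestly symmetric in $A$ and $C$; (iv) \emph{finite character} and \emph{local character}, the latter using that $d$ takes values in $\N$ so any descending chain of $d$-values stabilises after finitely many steps, giving a finite $B_0\subseteq B$ over which $A$ is $d$-independent from $B$; and (v) \emph{existence/extension}: given $A,B,C$ we must find $A'\equiv_B A$ with $\dindep{A'}{B}{C}$, which is produced by taking the free amalgam $\cl(AB)\otimes_{B'}\cl(BC)$ for a suitable $B'$ with $B\subseteq B'\subseteq\acl(B)$ and strongly embedding it into $\Mmu$ over $\cl(BC)$ using $\Kmu$-saturation (Theorem~\ref{t:amalgam}), after checking this amalgam lies in $\Kmu$ — here Corollary~\ref{c:isimpleamalgam} and the analysis in the proof of Theorem~\ref{t:amalgam} apply.

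The key remaining ingredient is \emph{stationarity} (or at least that $d$-independence has \emph{bounded multiplicity}): a non-forking extension of a type over $\acl(B)$ is unique. By Lemma~\ref{l:Tmu_qe}, a type over a strong set is determined by the isomorphism type of the closure; so if $\dindep{A_1}{B}{C}$ and $\dindep{A_2}{B}{C}$ with $A_1\equiv_{\acl(B)} A_2$, then by Lemma~\ref{l:dindep} both $\cl(A_iBC)$ are isomorphic to $\cl(A_iB)\otimes_{B'}\cl(BC)$ over $\cl(BC)$ with the \emph{same} amalgamation base $B'=\cl(A_iB)\cap\cl(BC)\subseteq\acl(B)$ (determined by the type over $\acl(B)$), hence $\cl(A_1BC)\cong\cl(A_2BC)$ over $\cl(BC)$, giving $A_1\equiv_{BC} A_2$ up to the finite ambiguity coming from $\acl$. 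This yields that $d$-independence is a stationary independence relation over algebraically closed sets, and by the uniqueness theorem for non-forking in stable theories (see \cite{TZ}), it must coincide with forking-independence.

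The main obstacle I anticipate is the \emph{existence/extension axiom combined with staying inside $\Kmu$}: one has to be careful that the free amalgam $\cl(AB)\otimes_{B'}\cl(BC)$ over the chosen $B'$ really belongs to $\Kmu$ and embeds strongly, because $\delta$ is \emph{not} submodular (as emphasised after Remark~\ref{r:projplane}), so the naive submodularity estimates that make this routine in standard Hrushovski constructions are unavailable. The way around it is to decompose $\cl(BC)$ over $B'$ into minimal extensions and amalgamate one step at a time, using Corollary~\ref{c:isimpleamalgam} for the $i$-minimal ($i\geq1$) steps and the $0$-minimal analysis from the proof of Theorem~\ref{t:amalgam} for the simple steps, invoking Lemma~\ref{l:submodular} (the restricted submodularity valid for $\L$-strong extensions) wherever submodularity is genuinely needed; the hypothesis $\cl(AB),\cl(BC)\leq\Mmu$ ensures the relevant extensions are $\L$-strong, so Lemma~\ref{l:submodular} is applicable. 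Once the amalgam is shown to lie in $\Kmu$, $\Kmu$-saturation of $\Mmu$ completes the extension step and the remaining axioms are bookkeeping with the formula $\delta(\cl(ABC))+\delta(B)=\delta(\cl(AB))+\delta(\cl(BC))$ from the proof of Lemma~\ref{l:dindep}.
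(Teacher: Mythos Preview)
Your proposal is correct and follows essentially the same route as the paper: both verify that $d$-independence satisfies the characterising axioms of forking (invariance, local character, transitivity, monotonicity, existence, and a boundedness/stationarity condition) and invoke the uniqueness theorem from \cite{TZ}, Thm.~8.5.10. The paper uses the specific axiom list from \cite{TZ} (with Weak Boundedness rather than stationarity, and without verifying symmetry separately) and dispatches Existence in one line by embedding $\cl(AB)\otimes_{\cl(B)}\cl(BC)$ via $\Kmu$-saturation, whereas you are more explicit about the non-submodularity obstacle there; but the underlying argument is the same.
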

\begin{proof}
Since $\Tmu$ is $\omega$-stable and $\Mmu$ is $\omega$-saturated, it suffices to show that $d$-independence coincides on $\Mmu$ with non-forking.
For that we have to verify the following properties, see \cite{TZ}, Thm. 8.5.10.

 \begin{enumerate}[a)]
  \item
    \textsc{(Invariance)}\index{invariance@\textsc{Invariance}}\quad
    $\dindep{}{}{}$ is invariant under $Aut(\Mmu)$,
  \item \textsc{(Local character)}\index{localcharacter@\textsc{Local
      Character}} For any finite set $A$ and arbitrary set $B$ there 
      is a finite set $B_0\subseteq B$
      such that $\dindep{A}{B_0}{B}$.

  \item \textsc{(Existence)}\index{existence@\textsc{Existence}} 
  For any finite set $A$ and arbitrary sets $B\subseteq C$ there is some $A'\subseteq\Mmu$ with
   $A'\cong _BA$ and $\dindep{A'}{B}{C}$.
  
  \item \textsc{(Weak Boundedness)}\index{weakboundedness@\textsc{Weak
      Boundedness}} For any finite set $A\subset\Mmu$ and arbitrary sets $B\subseteq C\subseteq \Mmu$ there are only finitely many isomorphism types
      of $A'\subset \Mmu$ over $B$ with $A'\cong _BA$ and $\dindep{A'}{B}{C}$.

  \item
    \textsc{(Transitivity)}\index{transitivity@\textsc{Transitivity}}
    \quad For finite sets $A$ and arbitrary sets $B,C,D$ 
    \[\mbox{ if }\dindep{A}{B}{C} \mbox{ and } \dindep{A}{BC}{D},\mbox{ then } \dindep{A}{B}{CD}.\]
    
  \item \textsc{(Weak Monotonicity)}
    \index{monotonicity@\textsc{monotonicity}!\textsc{weak}}
    \index{weakmonotonicity@\textsc{Weak Monotonicity}}\quad
    If for a finite set $A$ and arbitrary sets $B,C$ we have  
    \[\mbox{ if }\dindep{A}{B}{C} \mbox{ and } B\subseteq D\subseteq BC,\mbox{ then } \dindep{A}{B}{D}.\]
  \end{enumerate}
It is clear that \textsc{(Invariance)} holds. Also, since $d(A/B)\leq d(A/C)$ for any 
$C\subseteq\cl(B)$ and this can decrease only finitely many times, we can find
a finite set $B_0$ such that $d(A/B)=d(A/B_0)$, so \textsc{(Local character)}
holds.
    \textsc{(Transitivity)} holds as the assumptions simply state $d(A/BCD)=d(A/BC)=d(A/BC)=d(A/B)$.
\textsc{(Existence)} follows from Lemma~\ref{l:dindep}: we may assume $B=\cl(B)$ and hence we can embed $D=\cl(AB)\otimes_B(BC)$ into $\Mmu$. This shows that for $B=\cl(B)$ there is a unique $d$-independent extension to any $C$ cotaining $B$. If $B$ is not closed, there are only finitely many ways to extend the isomorphism type of $A$ over $B$ to $A$ over $\cl(B)$. Whence
\textsc{(Weak Boundedness)} follows.
For  \textsc{(Weak Monotonicity)}  notice that
if $\cl(ABC)\cong\cl(AB)\otimes_{\cl(B)}\cl(BC)$, then for any
$D$ with $B\subseteq D\subseteq BC$ we have
 $\cl(ABD)\cong\cl(AB)\otimes_{\cl(B)}\cl(BD)$.
\end{proof}

As an immediate corollary we obtain
\begin{corollary}\label{c:MR=d}
Let $M$ be a model of $\Tmu$. For
 finite sets $A,F\subset M$ we have $\MR(F/A)\leq d(F/A)$.\footnote{In fact, one can show that equality holds.}
\end{corollary}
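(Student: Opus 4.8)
The plan is to prove the bound $\MR(F/A)\leq d(F/A)$ by induction on the right-hand side $d(F/A)$, using the standard fact that for $\omega$-stable theories Morley rank is the foundation rank of the forking relation, together with Proposition~\ref{p:nonforking} identifying non-forking with $d$-independence. Concretely, since $\MR$ is characterised by $\MR(F/A)\geq k+1$ iff there is an extension of the type of $F$ over some $B\supseteq A$ which forks over $A$ and has $\MR\geq k$, it suffices to show: whenever $\tp(F'/B)$ forks over $A$ (with $F'\equiv_A F$), we have $d(F'/B)<d(F/A)$; then $\MR(F'/B)\leq d(F'/B)\leq d(F/A)-1$ by the inductive hypothesis, and taking the supremum over all such forking extensions gives $\MR(F/A)\leq d(F/A)$. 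The base case $d(F/A)=0$ is exactly Lemma~\ref{l:Tmu_alg}: $d(F/A)=0$ means $F$ is algebraic over $A$, so $\MR(F/A)=0$.

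The key step is therefore the implication ``$\tp(F'/B)$ forks over $A$ $\Rightarrow$ $d(F'/B)<d(F/A)$'', where we may assume $A\subseteq B$ and $F'\equiv_A F$. By Proposition~\ref{p:nonforking}, forking of $\tp(F'/B)$ over $A$ means $\dnindep{F'}{A}{B}$, i.e.\ $d(F'/A)>d(F'/B)$. Since $F'\equiv_A F$ implies $d(F'/A)=d(F/A)$ (types determine $d$ by Lemma~\ref{l:Tmu_qe}), we get $d(F'/B)<d(F'/A)=d(F/A)$. Because $d$ takes integer values (it is a difference of two integer-valued $\delta$'s), a strict inequality gives $d(F'/B)\leq d(F/A)-1$, which is what the induction needs. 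One should be slightly careful that the quantity $d(F/A)$ for infinite $A$ is defined via the minimum over finite subsets, so the forking extension argument should be set up with a finite $B$ witnessing the relevant values; since local character (Proposition~\ref{p:nonforking}) lets us reduce to finite base sets, this causes no difficulty.

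The main obstacle is purely bookkeeping: assembling the equivalence ``Morley rank = foundation rank of non-forking'' in the precise inductive form above, and making sure the reduction to finite parameter sets is clean when $A$ or $B$ is infinite. No genuinely new idea is needed beyond Proposition~\ref{p:nonforking}, Lemma~\ref{l:Tmu_alg}, Lemma~\ref{l:Tmu_qe}, and the integrality of $d$; the proof is a short induction. (The parenthetical remark that equality in fact holds would follow by additionally exhibiting, for each $k\leq d(F/A)$, a forking extension realising $d=d(F/A)-1$ at each step, using the amalgamation freedom of Lemma~\ref{l:dindep}; we do not carry this out here.)
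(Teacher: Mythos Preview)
Your argument is correct and is precisely the natural way to unpack the paper's claim; the paper itself gives no proof, stating the corollary as an ``immediate'' consequence of Proposition~\ref{p:nonforking}.

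One small wording caveat: the assertion that ``for $\omega$-stable theories Morley rank is the foundation rank of the forking relation'' is not literally true in general (that foundation rank is Lascar $U$-rank, which can be strictly below Morley rank at limit stages). But your induction only uses the forward implication ``$\MR(p)\geq k+1\Rightarrow$ there exists a forking extension of Morley rank $\geq k$'', and this \emph{does} hold in any $\omega$-stable theory: if $\MR(p)\geq k+1$ there are infinitely many pairwise inconsistent extensions of rank $\geq k$ over some $B\supseteq A$, while $p$ has only finitely many nonforking extensions to $B$ (bounded by the Morley degree), so some extension of rank $\geq k$ must fork. With that clarification the induction goes through exactly as you describe, and the base case is indeed Lemma~\ref{l:Tmu_alg}.
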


\section{Ampleness}

We first recall the definition of ampleness from \cite{evans}:

\begin{definition}\label{r:evans_ample}
A stable theory $T$ (weakly) eliminating imaginaries is called $n$-ample if -- possibly after naming parameters -- there are tuples $a_0,\ldots a_n$ in $M$ such that 
for $i=0,\ldots n-1$ the following holds:

\begin{enumerate}
\item $\acl(a_0,\ldots a_{i-1},a_i)\cap \acl(a_0,.\ldots a_{i-1},a_{i+1})=\acl(a_0,.\ldots a_{i-1})$;
\item $\notind{a_0\ldots a_{i-1}a_i}{}{a_{i+1}a_{i+2}\ldots a_n}$; and
\item  $\indep{a_0\ldots a_{i-1}}{a_i}{a_{i+1}a_{i+2}\ldots a_n}$.

\end{enumerate}

\end{definition}

\begin{remark}\rm{(see \cite{Te2})}\label{r:intersections}
If $a_0,\ldots a_n$  witness ampleness, it follows inductively that $\acl(a_i)\cap\acl(a_{i+1})\subseteq\acl(\emptyset)$. 
\end{remark}

\begin{theorem}\label{t:2-ample}
The theory $\Tmu$ is $2$-ample. More precisely,
for any model  $M$ of $\Tmu$ any flag $(p,l,e)$ in $M$ is a witness for the 
theory being $2$-ample.
\end{theorem}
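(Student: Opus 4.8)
The plan is to verify the three conditions of Definition~\ref{r:evans_ample} for $n=2$, taking $a_0=p$, $a_1=l$, $a_2=e$ for a complete flag $(p,l,e)$ in $M$. After naming no extra parameters, we must check: (1) the acl-intersection conditions $\acl(p)\cap\acl(l)=\acl(\emptyset)$ and $\acl(p,l)\cap\acl(p,e)=\acl(p)$; (2) the dependences $\notind{p}{}{l,e}$ and $\notind{p,l}{}{e}$; and (3) the independences $\indep{}{p}{l,e}$ and $\indep{p}{l}{e}$. The key tools are Lemma~\ref{l:dindep} (characterizing $d$-independence via free amalgams of closures), Proposition~\ref{p:nonforking} (identifying non-forking with $d$-independence), Lemma~\ref{l:Tmu_alg} (algebraicity over $A$ iff $d$-dimension $0$ over $A$), and Corollary~\ref{c:weakimaginaries} (weak elimination of imaginaries, which makes the definition applicable).

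The first step is to compute the closures of the relevant sets. Since $\{p\},\{l\},\{e\},\{p,l\},\{l,e\},\{p,l,e\}$ are all strong in $M$ (being in $\Kmu$ with non-negative $\delta$ of every extension — a single vertex or a flag has $\delta\geq 2(n-1)$ or $\geq 3(n-1)+1$ respectively), each equals its own closure up to the finitely many imaginaries sitting in $\acl$. For the independence statements in (3): to see $\indep{}{p}{l,e}$, I would use Lemma~\ref{l:dindep} with $B=\{p\}$, $A=\emptyset$, $C=\{l,e\}$ — but this is trivial since $A=\emptyset$ forces $d(A/B)=d(A/BC)=0$. The substantive one is $\indep{p}{l}{e}$: here $B=\{l\}$, $A=\{p\}$, $C=\{e\}$, and since $p\in\res(l)$ and $e\in\res(l)$, the flag $(p,l,e)$ is already strong, so $\cl(p,l,e)=\cl(p,l)\cl(l,e)$ with the $E^2$-edge between $p$ and $e$ induced by the line $l\in\cl(p,l)\cap\cl(l,e)$; thus $e=0$ in the notation of Lemma~\ref{l:dindep} and $d$-independence holds. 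For the dependences in (2): $\notind{p,l}{}{e}$ holds because adding $e$ to $\{p,l\}$ contributes a new vertex, a new $E$-edge $(l,e)$, a new induced $E^2$-edge $(p,e)$ and a new flag, giving $\delta(e/\{p,l\})=2(n-1)-(2(n-1)-1)-(n-1)+(n-1)=1>0$, whence $d(e/\{p,l\})=1\neq 0$; but we need $\notind{p,l}{}{e}$, i.e. $d(p,l/\emptyset)\neq d(p,l/e)$ — equivalently by symmetry considerations $e$ is not free from $\{p,l\}$ over $\emptyset$, which follows since $d(\{p,l,e\})<d(\{p,l\})+d(\{e\})$ (the flag is cheaper than the disjoint union because of the shared incidences). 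Similarly $\notind{p}{}{l,e}$ follows from the same kind of $\delta$-count.

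The acl-intersection conditions in (1) are where I expect the real work. For $\acl(p)\cap\acl(l)=\acl(\emptyset)$: by Lemma~\ref{l:Tmu_alg}, $\acl(p)$ consists of vertices $x$ with $d(x/p)=0$, i.e. those landing in $\cl(p)=\{p\}$ together with zero-dimensional extensions; since $\{p\}$ is strong and $0$-minimal extensions over a single point are controlled by $\mu$, $\acl(p)$ is built from $p$ by finitely many algebraic (zero-$\delta$) steps, and similarly for $\acl(l)$. A vertex in both must be algebraic over $p$ and over $l$ separately; the geometric content is that a point-algebraic and line-algebraic common element forces a cycle or a forbidden configuration in $M$ unless it is already algebraic over $\emptyset$ — this should follow from strongness of $\{p\}$, $\{l\}$ and an application of Corollary~\ref{c:submodular} together with Lemma~\ref{l:closure} (which gives $\cl(p)\cap\cl(l)\leq M$). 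For the harder condition $\acl(p,l)\cap\acl(p,e)=\acl(p)$: note $\cl(p,l)$ and $\cl(p,e)$ are both strong, their intersection is strong by Lemma~\ref{l:closure}, and contains $\cl(p)$; the task is to show nothing more lies in it. The main obstacle is ruling out that some vertex algebraic over the flag-edge $(p,l)$ and simultaneously over $(p,e)$ but not over $p$ alone can exist — I would argue that such a vertex would produce, via the amalgamation/minimal-pair machinery, a configuration violating either $\Kmu$ or the strongness of the flag, invoking Lemma~\ref{l:dindep} applied to $A=\emptyset$ (or a suitable base) over $B=\{p\}$ to pin down $\cl(p,l)\cap\cl(p,e)\subseteq\acl(p)$. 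This last step, carefully tracking which induced edges and imaginaries can appear, is the crux; everything else is a routine $\delta$-computation or a direct citation of the lemmas above.
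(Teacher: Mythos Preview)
Your handling of the forking and independence conditions (2) and (3) is essentially the same as the paper's and is fine: the dependences follow from easy $\delta$-counts showing $d$ drops, and $\indep{p}{l}{e}$ follows from Lemma~\ref{l:dindep} because the $E^2$-edge between $p$ and $e$ is induced by $l$.

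The gap is in your treatment of the acl-intersection conditions (1). You anticipate ``real work'' here and sketch an argument about ruling out vertices algebraic over two different subsets via forbidden configurations; this is unnecessary and your sketch does not actually close. The paper's observation, which you miss, is that \emph{every proper subset $A$ of $\{p,l,e\}$ is already algebraically closed}, i.e.\ $\acl(A)=A$. This follows from a one-line $\delta$-count: for $|A|\leq 2$ one checks directly that $\delta(A)\leq 3(n-1)$, while any set $C\supsetneq A$ with $|C|\geq 3$ has $\delta(C)\geq 3(n-1)+1$ by Condition~6(a) of Definition~\ref{d:K}, and the remaining two-element supersets are handled by inspection. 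Hence $\delta(C)>\delta(A)$ for every proper superset $C$, so $A$ is strong, $\cl(A)=A$, and $d(x/A)>0$ for every $x\notin A$; by Lemma~\ref{l:Tmu_alg} this gives $\acl(A)=A$. The intersection conditions then become trivial set-theoretic identities: $\acl(p)\cap\acl(l)=\{p\}\cap\{l\}=\emptyset=\acl(\emptyset)$ and $\acl(p,l)\cap\acl(p,e)=\{p,l\}\cap\{p,e\}=\{p\}=\acl(p)$.

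Your remark that ``$\acl(p)$ is built from $p$ by finitely many algebraic (zero-$\delta$) steps'' is exactly where you go astray: there simply are no zero-$\delta$ extensions of these small sets, by the axioms of $\K$. Once you notice this, condition (1) costs nothing.
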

\begin{proof}
By Proposition~\ref{p:nonforking} we easily have
\begin{enumerate}
\item $\notind{p}{}{ l}, \notind{p}{}{e}$ and $\notind{l}{}{e}$; \ and
\item $\indep{p}{l}{e}$.
\end{enumerate}
Since $\delta(A)<\delta(AB)$ for any $A\subseteq\{p,l,e\}$ and $B\neq\emptyset$
we see from Lemma~\ref{l:Tmu_alg} that every subset of $\{p,l,e\}$ is algebraically closed.
\end{proof}

Exactly as in \cite{Te2} we obtain the following characterization of $2$-ample tuples.
Note that by adding the necessary parameters to the language we may assume that ampleness is witnessed
without parameters.

\begin{lemma}\label{l:2-ample}
If $(A,B,C)$  witnesses $2$-ampleness, there exist vertices $a\in\acl(A)\setminus\acl(\emptyset), 
c\in\cl(BC))$ and a line $ b\in\acl(B)$ such 
that $(a,b,c)$ is a complete flag. 
\end{lemma}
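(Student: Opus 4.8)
The plan is to mimic the proof of the analogous statement in \cite{Te2}, using the characterization of non-forking from Proposition~\ref{p:nonforking} and the structural description of $d$-independence from Lemma~\ref{l:dindep}. First I would pass to the closures: replace $A,B,C$ by $\cl(A),\cl(B),\cl(C)$ (after adding parameters so ampleness is witnessed over $\emptyset$, and noting $\acl(\emptyset)$ is named), and use condition 1 of Definition~\ref{r:evans_ample} together with condition 3, $\indep{A}{B}{C}$ (here with $i=1$), so that by Lemma~\ref{l:dindep} we have $\cl(ABC)\cong\cl(AB)\otimes_{B'}\cl(BC)$ for some $B'$ with $B\subseteq B'\subseteq\acl(B)$, and in particular $\cl(ABC)=\cl(AB)\cl(BC)$. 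Simultaneously, from condition 2, $\notind{A}{}{BC}$, i.e.\ $\notind{A}{\emptyset}{BC}$, so $d(A/BC)<d(A)=d(A/\emptyset)$.

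\textbf{Key steps.} The first main step is to invoke Corollary~\ref{c:1-ample}: with the roles $\notind{A}{\emptyset}{BC}$, and having checked $\cl(A\emptyset)\cap\cl(BC\,\emptyset)=\cl(A)\cap\cl(BC)=\cl(\emptyset)$ (which follows from Remark~\ref{r:intersections}, giving $\acl(A)\cap\acl(BC)\subseteq\acl(\emptyset)$, combined with $\acl(\emptyset)$ being named so $\cl(\emptyset)$ is the base) and $\cl(A\cdot BC)=\cl(A)\cl(BC)$, we obtain an edge $(a,c)$ with $a\in\cl(A)\setminus\acl(\emptyset)$ and $c\in\cl(BC)\setminus\acl(\emptyset)$. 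The second step is to locate this edge relative to $B$: since $\cl(ABC)=\cl(AB)\otimes_{B'}\cl(BC)$ is a \emph{free} amalgam over $B'\subseteq\acl(B)$, any edge from $a\in\cl(A)\subseteq\cl(AB)$ to $c\in\cl(BC)$ in the amalgam is, by Definition~\ref{d:freeamalgam}, an induced $E^2$-edge arising from Condition 4 of Definition~\ref{d:K}: that is, $a$ is a point (or plane), $c$ is a plane (or point), and there is a line $b$ with $(a,b)$ and $(b,c)$ edges, and moreover $b$ must lie in the intersection $\cl(AB)\cap\cl(BC)$, hence in $B'\subseteq\acl(B)$. The final step is bookkeeping: $a$ and $c$ cannot be lines since $a$ is connected to the line $b$ and $c$ is connected to $b$, so up to the self-duality noted after the definition of the geometry we may take $a$ a point, $c$ a plane, $b$ a line, and $(a,b,c)$ is a complete flag (using condition 2 of the geometry, or Condition 4 of Definition~\ref{d:K}, to see that the flag relation $\F$ holds). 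We then have $a\in\cl(A)\setminus\acl(\emptyset)\subseteq\acl(A)\setminus\acl(\emptyset)$, $b\in\acl(B)$, and $c\in\cl(BC)$, as required (after possibly swapping the names ``point'' and ``plane'', which is harmless).

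\textbf{Main obstacle.} The delicate point is the free-amalgam location argument in the second step: one must be sure that in $\Mmu$ the closure $\cl(ABC)$ really is realized as the \emph{free} amalgam $\cl(AB)\otimes_{B'}\cl(BC)$ with no additional edges between the two sides beyond those forced by Condition 4, and that the inducing line $b$ is forced to lie in the overlap $B'$ rather than on one side only. This is exactly the content extracted in the proof of Lemma~\ref{l:dindep} (the computation showing $e=0$ forces every $E^2$-edge across the amalgam to be induced by a unique line in $\cl(AB)\cap\cl(BC)$, with no $E$-edges across), so it should go through verbatim; the only care needed is to track which of $a,c$ is a point and which a plane and to use the self-duality to normalize. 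A secondary, purely routine point is checking the intersection hypothesis of Corollary~\ref{c:1-ample}, for which Remark~\ref{r:intersections} and the convention that all parameters (including an enumeration of $\acl(\emptyset)$) have been named suffice.
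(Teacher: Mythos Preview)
Your overall architecture is right, but there is a genuine gap in the first main step. From $\indep{A}{B}{C}$ and Lemma~\ref{l:dindep} you correctly obtain $\cl(ABC)=\cl(AB)\cl(BC)$; however, to invoke Corollary~\ref{c:1-ample} with the substitution $(A,\emptyset,BC)$ you need the stronger statement $\cl(ABC)=\cl(A)\cl(BC)$, which you assert without justification. These are not the same: elements of $\cl(AB)\setminus\bigl(\cl(A)\cup\cl(BC)\bigr)$ may exist, and if so the dependence $\notind{A}{}{BC}$ can be accounted for entirely by $\cl(A)\cl(BC)$ failing to be closed, with no edge produced at all. This is exactly the obstruction the paper removes by the enlargement step: one shows that any $x\in\cl(AC)\setminus\cl(A)\cl(C)$ lies in $\cl(AB)\cl(BC)$, hence either in $\cl(AB)\cap\cl(AC)\subseteq\acl(A)$ (using condition~1 of ampleness with $i=1$) or in $\cl(BC)\cap\cl(AC)$; replacing $A$ by $A'=\acl(A)\cap\cl(AC)$ and $C$ by $C'=\cl(BC)\cap\cl(AC)$ then forces $\cl(A'C')=\cl(A')\cl(C')$, after which the corollary applies cleanly. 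Your argument skips this and so does not go through as written.

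There is a secondary gap in the second step. Even granting an edge $(a,c)$ with $a\in\cl(A)\setminus\acl(\emptyset)$ and $c\in\cl(BC)\setminus\acl(\emptyset)$, you need $c\notin B'$ to conclude the edge lies \emph{across} the free amalgam $\cl(AB)\otimes_{B'}\cl(BC)$ and is therefore induced by a line in $B'$. Since $B'\subseteq\acl(B)$ may strictly contain $\acl(\emptyset)$, your conclusion $c\notin\acl(\emptyset)$ does not exclude $c\in B'$; if $c\in B'$ the edge is internal to $\cl(AB)$ and no line is forced. In the paper's version the enlarged triple $(A',B',C')$ again witnesses ampleness, so Remark~\ref{r:intersections} gives $\acl(B')\cap\acl(C')\subseteq\acl(\emptyset)$, and since $c\in\cl(C')\setminus\acl(\emptyset)$ this rules out $c\in B'$. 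Both issues are fixed by the same preliminary enlargement, which is the missing ingredient in your proposal.
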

\begin{proof}
We first show that by enlarging $A$ and $C$ we may assume $\cl(AC)=\cl(A)\cl(C)$:
if $\cl(A)\cl(C)\subsetneq \cl(AC)$, let $x\in\cl(AC)\setminus\cl(A)\cl(C)$. Then $x\in\cl(ABC)=\cl(AB)\cl(BC)$.
Hence either $x\in\cl(AB)\cap\cl(AC)\subseteq\acl(A)$ or $x\in\cl(BC)\cap\cl(AC)$. 
We may replace $A$ by $A'=\acl(A)\cap\cl(AC)$, $C$ by $C'=\cl(BC)\cap\cl(AC)$ and
$B$ by $B'=\acl(B)\cap\cl(AB)\cap\cl(BC)$.
The triple $(A',B',C')$ still witnesses $2$-ampleness
and we have $\cl(A'C')=\cl(A')\cl(C')$ and $\cl(A'B'C')=\cl(A'B')\otimes_{B'}\cl(B'C')$. 
By Corollary~\ref{c:1-ample} there is an edge $(a,c)$ between $\cl(A')$ and $\cl(C')$, which must be induced by a line $b$ in $B'$. Since $a,c\notin B'$ by Remark~\ref{r:intersections},
this yields the required line $b\in B'$ yielding the flag. 
\end{proof}

Again as in \cite{Te2} this now easily implies:

\begin{theorem}\label{t:not3-ample}
The theory $\Tmu$ is not $3$-ample. 
\end{theorem}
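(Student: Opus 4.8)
The plan is to argue by contradiction, following the strategy of \cite{Te2}, Thm 2.35, and to use the structural dichotomy from Lemma~\ref{l:2-ample} twice. Suppose $(A_0,A_1,A_2,A_3)$ witnesses $3$-ampleness (after naming parameters, so without loss of generality $\acl(\emptyset)$ is trivial on the relevant vertices). Then both the triple $(A_0,A_1,A_2A_3)$ and the triple $(A_0A_1,A_2,A_3)$ witness $2$-ampleness, using the conditions in Definition~\ref{r:evans_ample}: the intersection conditions and the independence $\indep{A_0\ldots a_{i-1}}{a_i}{a_{i+1}\ldots a_n}$ pass to these coarser triples. Applying Lemma~\ref{l:2-ample} to $(A_0,A_1,A_2A_3)$ yields a complete flag $(a,b,c)$ with $a\in\acl(A_0)\setminus\acl(\emptyset)$, $b\in\acl(A_1)$ a line, and $c\in\cl(A_1A_2A_3)$.

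Next I would locate $c$ more precisely. The point is that $c\in\cl(A_1A_2A_3)=\cl(A_1A_2)\cl(A_2A_3)$ by $d$-independence of $A_1$ from $A_2$ over $A_2A_3$ (Lemma~\ref{l:dindep}), so after possibly enlarging $A_2$ and $A_3$ as in the proof of Lemma~\ref{l:2-ample} one can take $c$ to lie in one of these closures; combined with $c\notin\acl(A_1)$ (Remark~\ref{r:intersections}) one pushes $c$ into $\cl(A_2A_3)$. Then I would run Lemma~\ref{l:2-ample} a second time on the $2$-ample witness $(A_0A_1,A_2,A_3)$ to obtain a complete flag $(a',b',c')$ with $a'\in\acl(A_0A_1)$, $b'\in\acl(A_2)$ a line, $c'\in\cl(A_2A_3)$. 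The key is that the line $b\in\acl(A_1)$ produced in the first step is incident with both $a\in\acl(A_0)$ and $c\in\cl(A_2A_3)$, so $b$ connects $\cl(A_0)$ to $\cl(A_2A_3)$ inside $\cl(A_0A_1A_2A_3)$; but the various independence statements force $\cl(A_0)$ and $\cl(A_2A_3)$ to be "far apart" in the sense that any edge between them must be induced by a line in the intersection of the appropriate closures, which lies in $\acl(A_1)$ already — and tracing through the incidences one finds a cycle or a configuration violating the geometry axioms in Definition~\ref{d:K} (a point incident to a line incident to a plane forces incidence, producing short cycles or a negative-$\delta$ configuration, contradicting that all the closures are strong).

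Concretely the final contradiction I expect to extract is the following: the flag $(a,b,c)$ has $a\in\cl(A_0)$, the line $b\in\acl(A_1)$, and the plane-or-point $c\in\cl(A_2A_3)$. By Remark~\ref{r:intersections}, $\acl(A_0)\cap\acl(A_1)$ and $\acl(A_1)\cap\acl(A_2)$ and $\acl(A_2)\cap\acl(A_3)$ are all trivial. On the other hand $b$ is incident with $a$ and with $c$, and by axiom~4 of Definition~\ref{d:K} (a point on a line on a plane is incident with that plane) together with the generalized-polygon residue structure, the element $c$ together with $b$ and with the configuration coming from the second flag $(a',b',c')$ over $A_2$ produces an element of $\acl(A_1)\cap\cl(A_2A_3)$ that is not in $\acl(\emptyset)$, yet $d$-independence $\indep{A_1}{A_2}{A_3}$ and $\indep{A_0}{A_1}{A_2A_3}$ force $\cl(A_1)\cap\cl(A_2A_3)\subseteq\acl(\emptyset)$ via Lemma~\ref{l:dindep} and Corollary~\ref{c:1-ample}. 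That is the desired contradiction.

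The main obstacle I anticipate is the bookkeeping in the second application of Lemma~\ref{l:2-ample} and the precise placement of $c$: one must be careful that enlarging $A_2,A_3$ (or $A_0,A_1$) to arrange the free-amalgam decompositions does not destroy the $3$-ampleness witness, and that the line $b$ genuinely survives in the intersection $\acl(A_1)\cap\cl(A_2A_3)$ rather than degenerating into one of the parameter sets. Handling the point/plane duality (whether $a$ is a point and $c$ a plane, or vice versa, and correspondingly whether $b$ is the connecting line) requires checking a couple of symmetric cases, but each reduces to the same incidence-cycle argument. Since the paper says this follows "as in \cite{Te2}", I would keep the write-up short, citing that argument for the routine parts and only spelling out where the non-submodularity of $\delta$ (and hence the extra care around lines, cf.\ Lemma~\ref{l:submodular} and Corollary~\ref{c:1-ample}) matters.
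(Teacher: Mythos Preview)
Your plan is over-complicated and has a real gap. You apply Lemma~\ref{l:2-ample} twice, to $(A_0,A_1,A_2A_3)$ and to $(A_0A_1,A_2,A_3)$, but the two flags you obtain are a priori unrelated and you never explain how the second one enters the argument. The crux of your proposed contradiction is the assertion that the line $b\in\acl(A_1)$ ends up in $\cl(A_2A_3)$; but incidence of $b$ with some $c\in\cl(A_2A_3)$ does not place $b$ there, so the promised element of $\acl(A_1)\cap\cl(A_2A_3)\setminus\acl(\emptyset)$ never materialises. (There is also a side issue: you should check that $(A_0,A_1,A_2A_3)$ actually satisfies the intersection condition $\acl(A_0A_1)\cap\acl(A_0A_2A_3)=\acl(A_0)$ required for $2$-ampleness; this is not immediate from the $3$-ampleness axioms.)

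The paper's proof is a single application of Lemma~\ref{l:2-ample}, but to the triple $(A_0,A_2,A_3)$ --- \emph{skipping} $A_1$. This yields a complete flag $(a,c,d)$ with $a\in\acl(A_0)\setminus\acl(\emptyset)$, the \emph{line} $c\in\acl(A_2)$, and $d\in\cl(A_2A_3)$. Now bring in the so-far-unused independence $\indep{A_0}{A_1}{A_2A_3}$: by Lemma~\ref{l:dindep}, $\cl(A_0A_1A_2A_3)$ is the free amalgam of $\cl(A_0A_1)$ and $\cl(A_1A_2A_3)$ over a subset of $\acl(A_1)$, so the only edges crossing the two sides are induced $E^2$-edges via lines in $\acl(A_1)$. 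Since $c$ is itself a line, the edge $(a,c)$ is an $E$-edge and cannot be induced; hence $a$ or $c$ must lie in $\acl(A_1)$, and Remark~\ref{r:intersections} gives the contradiction. The trick you missed is that Lemma~\ref{l:2-ample} places the line of the flag in the algebraic closure of the \emph{middle} set of the triple; by choosing that middle set to be $A_2$ rather than $A_1$, the $E$-edge $(a,c)$ genuinely straddles the free amalgam over $\acl(A_1)$, and one application suffices.
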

\begin{proof}
Assume that $A,B,C,D$ witness $3$-ampleness.
Then $(A,C,D)$ witnesses $2$-ampleness and by
Lemma~\ref{l:2-ample} there exists a complete flag $(a,c,d)$ with $a\in\acl(A)\setminus\acl(\emptyset),c\in\acl(C)$ and $d\in\cl(CD)$. The edge $(a,c)$ is not induced by a line in $\acl(B)$ and
hence we must have $a$ or $c$ in $\cl(B)$. Since by Remark~\ref{r:intersections} both $\cl(A)\cap\cl(B)$ and $\cl(B)\cap\cl(C)$ are
contained in $\acl(\emptyset)$, this yields a contradiction.
\end{proof}
\begin{small}

\end{small}

\vspace{.5cm}
\noindent\parbox[t]{15em}{
Katrin Tent\footnote{Corresponding author}, Isabel M\"uller\\
Mathematisches Institut,\\
Universit\"at M\"unster,\\
Einsteinstrasse 62,\\
D-48149 M\"unster,\\
Germany,\\
{\tt tent@uni-muenster.de, i.mueller.berlin@gmail.com }}

\end{document}